\newcolumntype{M}[1]{>{\centering\arraybackslash}m{#1}}
\newtheorem{thm}{Theorem}[section]
\newtheorem{lem}[thm]{Lemma}
\newtheorem{prop}[thm]{Proposition}
\newtheorem{cor}[thm]{Corollary}
\newtheorem{conj}[thm]{Conjecture}
\theoremstyle{definition}
\newtheorem{defn}[thm]{Definition}
\newtheorem{ex}[thm]{Example}
\newtheorem{ques}[thm]{Question}
\newtheorem{rem}[thm]{Remark}
\newcommand{\bbA}{\mathbb{A}}
\newcommand{\bbF}{\mathbb{F}}
\newcommand{\bbP}{\mathbb{P}}
\newcommand{\bbQ}{\mathbb{Q}}
\newcommand{\bbZ}{\mathbb{Z}}
\newcommand{\calC}{\mathcal{C}}
\newcommand{\calO}{\mathcal{O}}
\newcommand{\calP}{\mathcal{P}}
\newcommand{\calQ}{\mathcal{Q}}
\newcommand{\calT}{\mathcal{T}}
\newcommand{\frakP}{\mathfrak{P}}
\newcommand{\frakp}{\mathfrak{p}}
\newcommand{\frakq}{\mathfrak{q}}
\newcommand{\bse}{{\boldsymbol{e}}}
\newcommand{\Kbar}{\overline{K}}
\renewcommand{\hbar}{\overline{h}}
\newcommand{\QQbar}{\overline{\bbQ}}
\newcommand{\Gammabar}{\overline{\Gamma}}
\newcommand{\Aut}{\operatorname{Aut}}
\newcommand{\Gal}{\operatorname{Gal}}
\newcommand{\ord}{\operatorname{ord}}
\newcommand{\PGL}{\operatorname{PGL}}
\newcommand{\PrePer}{\operatorname{PrePer}}
\renewcommand{\tilde}{\widetilde}
\renewcommand{\phi}{\varphi}
\newcommand{\longto}{\longrightarrow}
\newcommand{\Mod}[1]{\ \left(\operatorname{mod}\ {#1}\right)}
\numberwithin{equation}{section}
\title{Cubic points on dynamical modular curves}
\author{John R. Doyle}
\address{Department of Mathematics, Oklahoma State University,
Stillwater, OK 74078} 
\email{john.r.doyle@okstate.edu}
\author{Alexander Galarraga}
\address{Department of Mathematics, University of Washington, Seattle, WA 98195}
\email{agalar@uw.edu}
\subjclass[2020]{Primary 37P15, 37P35; Secondary 11G30, 14G05}
\keywords{Arithmetic dynamics, preperiodic portrait, dynamical modular curve, cubic points}
\begin{document}
\maketitle

\begin{abstract}
    We consider the family of dynamical modular curves associated to quadratic polynomial maps and determine precisely which of these curves have infinitely many cubic points. We use this to prove a classification statement on preperiodic points for quadratic polynomials over cubic fields, extending previous work of Poonen, Faber, and the first author and Krumm.
\end{abstract}

\section{Introduction}\label{sec:intro}

Let $K$ be a number field, let $\Kbar$ be its algebraic closure, and let $f(x) \in K(x)$ be a rational function. Write $f^0$ for the identity map on $\bbP^1$, and for each $n \ge 1$ set $f^n := f \circ f^{n-1}$. We say that a point $P \in \bbP^1(\Kbar)$ is {\bf periodic} for $f$ if there exists $n \ge 1$ such that $f^n(P) = P$; the minimal such $n$ is the ({\bf exact}) {\bf period} of $P$. More generally, we say that $P$ is {\bf preperiodic} for $f$ if there exist integers $m \ge 0$ and $n \ge 1$ such that $f^{m+n}(P) = f^m(P)$; the minimal such $m$ and $n$ are the {\bf preperiod} and {\bf eventual period} of $P$, respectively. The {\bf preperiodic portrait} for $f$ (over $K$) is the functional graph $G(f,K)$ of the restriction of $f$ to the set of $K$-rational preperiodic points for $f$, that is, the directed graph whose vertices are the $K$-rational preperiodic points for $f$ and whose edges are all ordered pairs of the form $(P, f(P))$. Throughout, we will use the term ``portrait" to refer to a functional graph on a finite set.

Northcott \cite{northcott:1950} showed that if $f$ has degree at least $2$, then $f$ has finitely many $K$-rational preperiodic points; in particular, the preperiodic portrait $G(f,K)$ is a finite graph. Based on an analogy between preperiodic points for rational maps and torsion points on elliptic curves, Morton and Silverman posed the following dynamical analogue of the strong uniform boundedness conjecture for torsion points on elliptic curves (now a theorem of Merel \cite{merel:1996}).

\begin{conj}[{Morton-Silverman \cite[p. 100]{morton/silverman:1994}}]\label{conj:ubc}
Fix integers $d \ge 1$ and $D \ge 2$. There exists a constant $B = B(d,D)$ such that if $K$ is a number field of absolute degree $d$ and $f(x) \in K(x)$ is a rational function of degree $D$, then the number of $K$-rational preperiodic points for $f$ is at most $B$.
\end{conj}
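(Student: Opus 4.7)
The plan is to attack Conjecture \ref{conj:ubc} via the moduli-theoretic strategy that proved successful for torsion on elliptic curves. Fix $d$ and $D$. Let $\Rat_D$ denote the moduli space of degree-$D$ rational maps on $\bbP^1$ modulo $\PGL_2$-conjugation, realized as a quasi-projective variety over $\bbQ$. For each abstract portrait $G$ (a finite functional graph), the dynamical modular variety $Y_1(G)$ parameterizes pairs $(f,\iota)$ where $f\in\Rat_D$ and $\iota$ realizes $G$ as a subportrait of the full preperiodic portrait of $f$. Conjecture \ref{conj:ubc} is then equivalent to the assertion that there exists $B = B(d,D)$ such that for every portrait $G$ with $|G| > B$, the variety $Y_1(G)$ has no closed points of residue degree $\le d$ over $\bbQ$.

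To establish finiteness for a single $G$, I would exploit the fact that when $|G|$ is large, $Y_1(G)$ is cut out by many preperiodicity conditions and should be of general type; in favorable cases (e.g.\ when a well-chosen slice is a curve of genus $\ge 2$) Faltings' theorem, and more generally Faltings--Vojta for subvarieties of abelian varieties, bounds the number of $K$-rational points for each fixed $K$ of degree $d$. For small $D$ and $d$ this program has already been carried out concretely — for $D = 2$ over $\bbQ$ (Poonen), over quadratic fields (Doyle, Faber, Krumm), and over cubic fields in the present work — and the natural strategy is to analyze $Y_1(G)$ one portrait type at a time, producing explicit models and using descent to rule out large $G$.

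The crux is to promote finiteness for individual pairs $(K,f)$ into a single bound uniform in $K$. Here the template is Merel's theorem, which replaced Mazur's case-by-case analysis of $X_1(N)$ with a uniform argument using the winding quotient of $J_1(N)$ and the Manin--Drinfeld theorem, exploiting that the Eisenstein quotient has Mordell--Weil rank zero. The dynamical analogue I would seek is a canonical abelian quotient of $\Jac(Y_1(G))$ (or of its symmetric powers) whose rank is provably zero, forcing $Y_1(G)(K)$ to lie in the torsion of a bounded-rank abelian variety and then bounding points uniformly in $[K:\bbQ] = d$ via a Kamienny--Merel-style argument over the symmetric $d$-th power.

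The hard part, and in my view the decisive obstruction, is precisely this final uniformity step. No dynamical Hecke theory analogous to the one governing $X_1(N)$ is known, and the Jacobians of dynamical modular curves carry no obvious canonical quotient of controlled rank; even computing $\Jac(Y_1(G))$ explicitly becomes intractable as $G$ grows. Until such a structure is identified — or an entirely different mechanism (unlikely intersections, $o$-minimality in families of $\Rat_D$, or a uniform height-theoretic inequality in the spirit of Vojta's conjectures) can be pushed through — the conjecture will remain out of reach by this route. My plan would therefore combine the case-by-case geometric analysis of $Y_1(G)$ in the style of the present paper with a sustained search for the missing uniform mechanism, weighting the effort toward the latter.
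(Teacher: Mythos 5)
This statement is not a theorem of the paper: it is the Morton--Silverman uniform boundedness conjecture, which the paper quotes purely as motivation and explicitly describes as completely open --- not known for any pair $(d,D)$. There is therefore no proof in the paper to compare against, and your submission is not a proof either; it is a research program, and you concede as much in your final paragraph. The decisive gap is the one you yourself name: every load-bearing step in your outline is an unestablished assertion. You assume without argument that $Y_1(G)$ is of general type for large $|G|$ (for general $D$ these are higher-dimensional varieties, and Faltings/Faltings--Vojta gives finiteness only for curves of genus $\ge 2$ or subvarieties of abelian varieties, a reduction you do not supply); more fatally, the Kamienny--Merel template requires a canonical quotient of the Jacobian with provably controlled Mordell--Weil rank (the winding/Eisenstein quotient), and no dynamical analogue of the Hecke-theoretic structure that produces it is known to exist. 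A proof cannot rest on the hope that such a structure will be found.

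For context on what \emph{is} provable by the geometric route you sketch: the present paper attacks only the much weaker ``realized infinitely often'' question for $D = 2$, where $X_1(\calP)$ is a genuine curve, and even there the results (Theorems~\ref{thm:main_dynamical} and~\ref{thm:main_curves}, determining $\Gamma(3)$) classify portraits occurring for infinitely many conjugacy classes, using Kadets--Vogt, Castelnuovo--Severi, and Jacobian decompositions curve by curve. This case-by-case analysis gives no uniform bound over all degree-$d$ fields and all portraits, which is exactly why Conjecture~\ref{conj:ubc} remains open; your proposal correctly locates the obstruction but does not overcome it, so it cannot be accepted as a proof of the statement.
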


Conjecture~\ref{conj:ubc} is completely open---it is not known for any values of $d$ or $D$---though we mention that Looper \cite{looper:2021, looper} has proven a version of the uniform bounedness conjecture for polynomials assuming a stronger variant of the $abc$ conjecture. To emphasize the difficulty of Conjecture~\ref{conj:ubc}, we note that just the case $D = 4$ would suffice to prove Merel's theorem; see \cite[\textsection 3.3]{silverman:2007}.

\subsection{Preperiodic portraits for quadratic polynomials}

Perhaps the most well-studied family of rational functions in complex and arithmetic dynamics is that of quadratic polynomials. We say that two rational functions $f$ and $g$ are {\bf linearly conjugate} over $K$ if there exists $\gamma \in \PGL_2(K)$ such $g = \gamma^{-1}\circ f\circ\gamma$. This is a natural notion of equivalence in dynamics since conjugation commutes with iteration; in particular, if $f,g \in K(x)$ are linearly conjugate over $K$, then the portraits $G(f,K)$ and $G(g,K)$ are isomorphic. Every quadratic polynomial in $K[x]$ is linearly conjugate over $K$ to a unique polynomial of the form
    \[
    f_c(x) := x^2 + c
    \]
with $c \in K$; see \cite[\textsection 4.2]{silverman:2007}, for example. Thus, the space of quadratic polynomials \emph{up to linear conjugation} is a one-parameter family, which we view as analogous to the fact that elliptic curves up to isomorphism form a one-parameter family (parametrized by the $j$-invariant).

Conjecture~\ref{conj:ubc} asserts that the number of $K$-rational preperiodic points for a quadratic polynomial should be bounded above by a constant depending only on $[K:\bbQ]$. We rephrase this in terms of preperiodic portraits.

\begin{conj}\label{conj:ubc-classification}
Fix an integer $d \ge 1$. There exists a (minimal) finite set $\Gammabar = \Gammabar(d)$ of portraits such that for every number field $K$ of absolute degree $d$ and every quadratic $f(x) \in K[x]$, the preperiodic portrait $G(f,K)$ is isomorphic to a portrait in $\Gammabar$.
\end{conj}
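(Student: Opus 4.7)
The plan is to recast the conjecture as a statement about dynamical modular curves. For each abstract directed portrait $G$ satisfying the combinatorial constraints of a preperiodic graph of a degree-$2$ map (every non-root vertex has out-degree $1$, and every vertex has in-degree at most $2$), let $X_1^{\dyn}(G)$ denote the moduli space parametrizing pairs $(c,\iota)$, where $c\in\bbA^1$ and $\iota$ is an embedding of $G$ into the preperiodic graph of $f_c(x)=x^2+c$. By Northcott's theorem each $G(f_c,K)$ is itself a finite graph, so Conjecture~\ref{conj:ubc-classification} is equivalent to the assertion that only finitely many portraits $G$ admit a $K$-rational point on $X_1^{\dyn}(G)$ for some number field $K$ with $[K:\bbQ]=d$.

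The main tool would be Faltings' theorem in the following form: a smooth projective curve $X/\bbQ$ has only finitely many algebraic points of degree $\le d$ unless it admits a dominant $\bbQ$-morphism of degree at most $2d$ to $\bbP^1$ or to an elliptic curve of positive rank over $\bbQ$. The strategy is thus to partially order portraits by inclusion, identify the \emph{minimal new} portraits $G$ (those not already captured by a smaller portrait, obtained for example by enlarging a cycle to a larger exact period or by deepening a tree of preimages), and show that $X_1^{\dyn}(G)$ has either low $\bbQ$-gonality or a positive-rank elliptic quotient for only finitely many such $G$. A residual case analysis over the remaining finite collection of low-genus curves would then explicitly determine $\Gammabar(d)$.

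The principal obstacle is the gonality step. This is precisely the dynamical analogue of Abramovich's linear lower bound for the gonality of the classical modular curves $X_1(N)$, which is a critical ingredient in the Mazur--Kamienny--Merel approach to torsion on elliptic curves. No such bound is currently known for dynamical modular curves attached to $f_c$, and its absence is essentially why the Morton--Silverman conjecture remains completely open for quadratic polynomials. Lacking a uniform gonality theorem, the only feasible program, and the one the abstract indicates the paper pursues for $d=3$, is to bypass the gonality question entirely: classify by direct geometric analysis (computing genera, gonalities, and Mordell--Weil ranks of the Jacobians) which curves $X_1^{\dyn}(G)$ possess infinitely many cubic points, and thereby describe the portraits that occur for infinitely many parameters $c$ over cubic fields, leaving sporadic cubic points on the remaining curves as a source of genuinely open cases; ruling out all such sporadic points uniformly in $c$ is equivalent to the hard part of Conjecture~\ref{conj:ubc-classification} itself.
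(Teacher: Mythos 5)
This statement is Conjecture~\ref{conj:ubc-classification}: the paper does not prove it, and indeed emphasizes that it is a reformulation (for quadratic polynomials) of the Morton--Silverman uniform boundedness conjecture, which is completely open for every $d$ and $D$. Your proposal is therefore correctly calibrated rather than mistaken: you do not claim a proof, you recast the conjecture in terms of degree-$d$ points on the curves $X_1(\calP)$ (which matches the paper's framework, up to the bounded-height argument of Lemma~\ref{lem:finitely-many-PCF-1/4} handling the finitely many non-generic parameters), and you correctly locate the obstruction --- the absence of a uniform gonality lower bound for dynamical modular curves analogous to Abramovich's bound for classical modular curves, without which Faltings-type finiteness (or, for $d\le 3$, the sharper Kadets--Vogt criterion the paper actually uses) cannot be applied uniformly over all portraits. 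Your closing description of the feasible fallback --- classifying only the portraits realized for \emph{infinitely many} parameters $c$ by direct analysis of genera, gonalities, and Mordell--Weil ranks --- is precisely the program the paper carries out in Theorems~\ref{thm:main_dynamical} and~\ref{thm:main_curves}; the conjecture itself, which additionally requires controlling sporadic low-degree points on the remaining curves uniformly in $c$, is left open there as well.
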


Thus, the problem is to determine for which portraits $\calP$ there exists a degree-$d$ number field $K$ and a quadratic polynomial $f(x) \in K[x]$ for which $G(f,K) \cong \calP$. 
For the $d = 1$ case, Poonen \cite{poonen:1998} has shown that if there are no quadratic polynomials over $\bbQ$ with rational points of period larger than $3$, then $\Gammabar(1)$ consists of exactly the twelve portraits labelled
    \[
    \rm
    \emptyset,\ 2(1),\ 3(1,1),\ 3(2),\ 4(1,1),\ 4(2),\ 5(1,1)a,\ 6(1,1),\ 6(2),\ 6(3),\ 8(2,1,1), \text{ and }  8(3)
    \]
in Appendix~\ref{app:portraits}. It is not known whether there exists a quadratic polynomial over $\bbQ$ with rational points of period larger than $3$; however, it is known that a quadratic polynomial in $\bbQ[x]$ cannot have rational points of period $4$ \cite{morton:1998}, period $5$ \cite{flynn/poonen/schaefer:1997}, or, assuming the Birch and Swinnerton-Dyer conjecture for a certain abelian fourfold, period $6$ \cite{stoll:2008}. Further computational evidence \cite{hutz/ingram:2013} suggests that $3$ the largest possible period arising in this setting.

For the $d = 2$ case, based on a large amount of computational evidence together with techniques similar to those used in \cite{poonen:1998}, it was suggested in \cite[p. 514]{doyle/faber/krumm:2014} and explicitly conjectured in \cite[Conjecture 1.4]{doyle:2018quad} that $\Gammabar(2)$ consists of precisely $46$ portraits, which can be found in \cite[Appendix B]{doyle/faber/krumm:2014}.

\subsection{Portraits realized infinitely often}

Given $d \ge 1$, a complete classification of those preperiodic portraits that can be realized by quadratic polynomials defined over number fields of degree $d$---that is, determining the set $\Gammabar(d)$---seems to be quite difficult. Instead, we consider the following variant: Informally, given $d \ge 1$, which preperiodic portraits can be realized infinitely often by quadratic polynomials over degree-$d$ number fields?

As phrased, this is not quite the right question. Indeed, any preperiodic portrait realized by even one quadratic polynomial over $K$ will be realized infinitely often, since linearly conjugate maps have isomorphic preperiodic portraits. Thus, we wish to know which portraits can be realized by infinitely many \emph{equivalence classes} of quadratic polynomials over degree-$d$ number fields.
For this problem, the analogue of Conjecture~\ref{conj:ubc-classification} is known. The following statement was proven in \cite{doyle/krumm:2024} using the main results of \cite{doyle/poonen:2020}. For $d \ge 1$, we define
    \[
    \bbQ^{(d)} := \{\alpha \in \QQbar : [\bbQ(\alpha) : \bbQ] \le d\}
    \]
to be the set of algebraic numbers of degree at most $d$. 

\begin{thm}[{\cite[Theorem 1.11]{doyle/krumm:2024}}]
Fix an integer $d \ge 1$. There exists a finite set $\Gamma = \Gamma(d)$ of portraits such that, for every portrait $\calP$, the following are equivalent:
    \begin{enumerate}[label=\rm(\alph*)]
        \item There are infinitely many $c \in \bbQ^{(d)}$ such that $G(f_c, K) \cong \calP$ for some degree-$d$ number field $K$ containing $c$.
        \item The portrait $\calP$ is isomorphic to a portrait in $\Gamma$.
    \end{enumerate}
\end{thm}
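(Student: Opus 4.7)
The plan is to translate condition (a) into a statement about low-degree points on a family of algebraic curves---the \emph{dynamical modular curves} $Y_1(\calP)$ (with smooth projective models $X_1(\calP)$) constructed in \cite{doyle/poonen:2020}. For each admissible portrait $\calP$, the curve $Y_1(\calP)/\bbQ$ parameterizes pairs $(c, \iota)$ with $\iota \colon \calP \hookrightarrow G(f_c, \Kbar)$ an injective morphism of portraits. Modulo finitely many cusps and automorphism loci, a parameter $c \in \bbQ^{(d)}$ satisfies $G(f_c, K) \cong \calP$ for some degree-$d$ field $K$ if and only if $c$ lifts to a degree-$\le d$ closed point of $Y_1(\calP)$ not lying on $Y_1(\calP')$ for any $\calP' \supsetneq \calP$.

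First, I would invoke the Abramovich--Harris / Frey criterion characterizing when a smooth projective curve $X/\bbQ$ has infinitely many points of degree $\le d$: namely, when its $d$-th symmetric power contains a rational or elliptic curve with infinitely many $\bbQ$-points, equivalently when $X$ admits either a dominant morphism of degree $\le d$ to $\bbP^1$ or (for $d \ge 2$) a dominant morphism to an elliptic curve of positive Mordell--Weil rank. Call such an $X$ \emph{$d$-abundant}. Condition (a) is then nearly equivalent to $X_1(\calP)$ being $d$-abundant, up to the sieving step below.

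Next, I would apply the main classification of \cite{doyle/poonen:2020}, which computes the genus and describes all relevant quotients (in particular, all dominant morphisms to curves of genus $\le 1$) of every $Y_1(\calP)$ arising from the quadratic family $f_c$. Using their bounds together with the tower structure of forgetful maps $Y_1(\calP') \to Y_1(\calP)$ whenever $\calP \hookrightarrow \calP'$, one shows that both the genus and the $\bbQ$-gonality of $Y_1(\calP)$ grow with the combinatorial size of $\calP$, so $X_1(\calP)$ can be $d$-abundant for only finitely many portraits $\calP$. This produces a finite candidate list.

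The main obstacle is the sieve in the last step: for each candidate $\calP$, I must verify that infinitely many of the degree-$\le d$ points of $Y_1(\calP)$ give parameters $c$ whose full portrait $G(f_c, K)$ is exactly $\calP$, rather than some strictly larger portrait. This requires an inclusion--exclusion over the finitely many immediate extensions $\calP' \supsetneq \calP$: one argues that each such $Y_1(\calP')$ is itself either not $d$-abundant (contributing only finitely many parameters) or maps to a proper subvariety of $Y_1(\calP)$, so the ``exactly $\calP$'' locus remains Zariski-dense---hence infinite in $\bbQ^{(d)}$---whenever $Y_1(\calP)$ itself is $d$-abundant. Collecting the portraits that survive both the abundance check and this sieve yields the desired finite set $\Gamma = \Gamma(d)$.
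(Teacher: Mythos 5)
First, a point of comparison: the paper does not prove this statement at all---it is quoted from \cite[Theorem 1.11]{doyle/krumm:2024}, where it is deduced from the gonality bounds of \cite{doyle/poonen:2020}. So your proposal can only be measured against that intended argument, not against an in-paper proof.

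Your framework (translating condition (a) into low-degree points on $X_1(\calP)$ and appealing to growth of geometric invariants) is the right one, but there is a genuine flaw in the pivot step: the ``Abramovich--Harris criterion'' you invoke as an equivalence---infinitely many points of degree $\le d$ if and only if a degree-$\le d$ map to $\bbP^1$ or a map to a positive-rank elliptic curve---is \emph{false} for $d \ge 4$, by the Debarre--Fahlaoui counterexamples; this is precisely why the present paper must invoke Theorem~\ref{thm:KV} of Kadets--Vogt even for $d=3$, and no such clean equivalence is available for general $d$. Fortunately the theorem needs neither the equivalence nor your sieve. One may simply \emph{define} $\Gamma(d)$ to be the set of isomorphism classes of portraits satisfying (a); the equivalence of (a) and (b) is then automatic, and the entire content is that this set is finite. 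For that, only one implication is required: if $\calP$ satisfies (a), then (after discarding the finitely many non-generic-quadratic cases via Lemma~\ref{lem:finitely-many-PCF-1/4}) the curve $X_1(\calP)$ acquires infinitely many closed points of degree $\le d$, hence by Frey's theorem (a consequence of Faltings) admits a map of degree $\le 2d$ to $\bbP^1$, i.e.\ has gonality at most $2d$. The main theorem of \cite{doyle/poonen:2020}---which gives a lower bound on the gonality of $X_1(\calP)$ growing with the size of $\calP$, not a classification of ``all dominant morphisms to curves of genus $\le 1$'' as you assert---then bounds the number of vertices of $\calP$, leaving only finitely many possibilities. If you restructure the argument around this single implication and drop both the false equivalence and the inclusion--exclusion sieve (which belongs to the much harder problem of determining $\Gamma(d)$ explicitly, as this paper does for $d=3$), the proof is correct.
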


The sets $\Gamma(1)$ and $\Gamma(2)$ are known. Again referring to the labels of portraits in Appendix~\ref{app:portraits}, Faber showed in \cite{faber:2015} that $\Gamma(1)$ consists of the seven portraits
    \begin{center}
    $\emptyset$,\quad 4(1,1),\quad 4(2),\quad 6(1,1),\quad 6(2),\quad 6(3),\quad and\quad 8(2,1,1),
    \end{center}
and the first author and Krumm showed in \cite{doyle/krumm:2024} that $\Gamma(2)$ consists of the $17$ portraits
    \begin{center}
    $\emptyset$,\quad 4(1,1),\quad 4(2),\quad 6(1,1),\quad 6(2),\quad 6(3),\quad 8(1,1)a/b,\quad 8(2)a/b,\quad 8(2,1,1),\quad 8(3),\quad 8(4),\quad 10(2,1,1)a/b,\quad 10(3,1,1),\quad and\quad 10(3,2).
    \end{center}
In this article, we determine $\Gamma(3)$.

\begin{thm}\label{thm:main_dynamical}
    For an abstract portrait $\calP$, the following are equivalent:
        \begin{enumerate}[label=\rm(\alph*)]
            \item There are infinitely many $c \in \bbQ^{(3)}$ such that $G(f_c, K) \cong \calP$ for some cubic number field $K$ containing $c$.
            \item The portrait $\calP$ is isomorphic to one of the following $19$ portraits from Appendix~\ref{app:portraits}:
            \begin{center}
            \rm
            $\emptyset$,\quad  4(1,1),\quad  4(2),\quad  6(1,1),\quad  6(2),\quad  6(3),\quad  8(1,1)a/b,\quad  8(2)a/b,\quad  8(3),\quad  8(4),\quad  8(2,1,1),\quad  10(1,1)a,\quad  10(2,1,1)a/b,\quad  10(3,1,1),\quad  10(3,2),\quad  12(3,3).
            \end{center}
        \end{enumerate}
    In particular, $\Gamma(3)$ is the set of portraits listed in part \emph{(b)}.
\end{thm}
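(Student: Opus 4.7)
The plan is to reduce Theorem~\ref{thm:main_dynamical} to a question about cubic points on dynamical modular curves. For each abstract portrait $\calP$, let $Y_1(\calP)$ denote the modular curve parametrizing pairs $(c, \{P_v\}_{v \in V(\calP)})$ whose marked points realize $\calP$ as a sub-portrait of $G(f_c, \Kbar)$. Then condition (a) holds for $\calP$ precisely when $Y_1(\calP)$ admits infinitely many cubic points mapping to cubic values of $c$, at which generically the full preperiodic portrait equals $\calP$ and not a strictly larger portrait. The argument splits into three stages: producing a finite candidate list of portraits, verifying the positive direction for each of the $19$ listed portraits, and ruling out every other candidate.

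For the enumeration stage, I would combine the known bounds on possible periods for preperiodic points of $f_c$ over cubic fields (obtained from genus and rank computations for the dynatomic modular curves $Y_1(N)$) with $p$-adic bounds on preperiod given period, together with the prior classifications $\Gamma(1)$ and $\Gamma(2)$. These restrict attention to an explicit finite list of portraits to analyze.

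For the positive direction, for each portrait $\calP$ in the proposed $\Gamma(3)$ I would compute an affine model for $Y_1(\calP)$ and its geometric genus, then verify infinite cubic-point realization by locating an appropriate geometric structure. The curves break into three regimes: genus-$0$ curves with a rational point (rationally parametrizable); genus-$1$ curves for which either a positive-rank elliptic quotient over $\bbQ$ or an explicit degree-$3$ map to $\bbP^1$ is exhibited; and higher-genus curves admitting either a trigonal structure or a degree-$3$ map to a positive-rank elliptic quotient. In each case, the Harris--Silverman and Abramovich--Harris/Debarre--Fahlaoui theorems on cubic points on curves (via their symmetric cubes) guarantee infinitely many cubic points. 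One must then check that for all but finitely many such $c$ the full portrait $G(f_c, K)$ equals $\calP$ exactly, by ruling out specialization onto components of $Y_1(\calP')$ for larger $\calP' \supsetneq \calP$.

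The main obstacle is the negative direction: for every candidate $\calP'$ not appearing on the list of $19$, I would need to show that $Y_1(\calP')$ has only finitely many cubic points above cubic $c$-values. For $g(Y_1(\calP')) \geq 2$, this reduces by Debarre--Fahlaoui to certifying that the curve is neither trigonal nor admits a degree-$3$ map to an elliptic curve of positive rank over $\bbQ$. This requires curve-by-curve geometric analysis -- computing gonalities, decomposing Jacobians into isogeny factors, verifying ranks of elliptic quotients, and in borderline cases exhibiting cubic points arising only from sub-portraits of larger realizable portraits -- carried out across a substantial number of modular curves. This case analysis, together with the bookkeeping needed to distinguish full-portrait from sub-portrait realizations, constitutes the technical heart of the proof.
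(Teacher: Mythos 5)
Your overall architecture matches the paper's: first classify which dynamical modular curves $X_1(\calP)$ have infinitely many cubic points (the content of Theorem~\ref{thm:main_curves}, carried out curve-by-curve with gonality bounds, Castelnuovo--Severi, and isogeny decompositions of Jacobians), then upgrade cubic points on $X_1(\calP)$ to exact realizations of $\calP$. The genuine gap is in the upgrade step. You assert that ``for all but finitely many such $c$ the full portrait $G(f_c,K)$ equals $\calP$ exactly, by ruling out specialization onto components of $Y_1(\calP')$ for larger $\calP'\supsetneq\calP$.'' This is false as stated: for $\calP = {\rm 6(3)}$, the curve $X_1({\rm 8(3)})$ also has infinitely many cubic points, and since the covering $X_1({\rm 8(3)})\to X_1({\rm 6(3)})$ has degree $2$, each such cubic point lies over a \emph{cubic} point of $X_1({\rm 6(3)})$ at which the portrait is strictly larger than ${\rm 6(3)}$; so infinitely many cubic points must be discarded, not finitely many. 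Moreover, even when every proper superportrait has a curve with only finitely many cubic points, there are \emph{infinitely many} minimal superportraits to exclude---one for each possible length of a newly appearing cycle---and a union of infinitely many finite exceptional sets need not be finite. No finiteness argument of the kind you sketch can close this step.

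The paper's Proposition~\ref{prop:realizing-portraits} closes it with two ingredients your proposal does not supply. First, a Hilbert-irreducibility statement relative to the degree-$3$ map $\varphi:X_1(\calP)\to\bbP^1$ (Lemma~\ref{lem:HIT}) produces infinitely many cubic points $P$ over which the fibers of the finitely many relevant covers $X_1(\calP_i)\to X_1(\calP)$ are irreducible over $\bbQ(P)$, so that the portrait cannot grow to any $\calP_i$. Second, the same lemma arranges $\ord_{\frakq}(c(P))\ge 0$ at all primes $\frakq$ above a fixed prime $p$, whence the good-reduction bound of Proposition~\ref{prop:good-red} caps the period of any $K$-rational periodic point by a constant $B$; this is what cuts the infinitely many potential superportraits down to the finite set $\Sigma(\calP,B)$. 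Your passing mention of $p$-adic bounds appears only in the enumeration stage, which is not where that input is actually needed. Without both ingredients the implication from ``infinitely many cubic points on $X_1(\calP)$'' to statement (a) is not established. (A smaller imprecision: condition (a) requires $c\in\bbQ^{(3)}$, not $c$ of exact degree $3$; for ${\rm 12(3,3)}$ the relevant cubic points all have rational $c$-coordinate, so restricting to ``cubic values of $c$'' as you phrase it would wrongly exclude that portrait.)
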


Each realization of a portrait $\calP$ as the $K$-rational preperiodic portrait of a polynomial $x^2 + c \in K[x]$ yields a $K$-rational point on a certain \emph{dynamical modular curve} $X_1(\calP)$. We defer the definitions to Section~\ref{sec:DMCs}, but we note that these curves are, as the name suggests, dynamical analogues of the classical modular curves which parametrize isomorphism classes of elliptic curves together with marked torsion points of a given order. Much of the work in proving Theorem~\ref{thm:main_dynamical} is in proving the following:

\begin{thm}\label{thm:main_curves}
    For a \emph{generic quadratic portrait} $\calP$ (as defined in Section~\ref{sec:genDMC}), the following are equivalent:
    \begin{enumerate}[label=\rm(\alph*)]
    \item There are infinitely many cubic points on $X_1(\calP)$.
    \item The portrait $\calP$ is isomorphic to one of the following $19$ portraits from Appendix~\ref{app:portraits}:
        \begin{center}
        \rm
        $\emptyset$,\quad  4(1,1),\quad  4(2),\quad  6(1,1),\quad  6(2),\quad  6(3),\quad  8(1,1)a/b,\quad  8(2)a/b,\quad  8(3),\quad  8(4),\quad  8(2,1,1),\quad  10(1,1)a,\quad  10(2,1,1)a/b,\quad  10(3,1,1),\quad  10(3,2),\quad  12(3,3).
        \end{center}
    \end{enumerate}
\end{thm}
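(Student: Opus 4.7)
The plan is to prove both directions of the equivalence, following the strategy used in the analogous lower-dimensional cases \cite{faber:2015} and \cite{doyle/krumm:2024}. The direction ``(b) $\Rightarrow$ (a)'' requires exhibiting, for each of the $19$ listed portraits $\calP$, infinitely many cubic points on $X_1(\calP)$, while the direction ``(a) $\Rightarrow$ (b)'' requires ruling out infinite families of cubic points on $X_1(\calP)$ for every other generic quadratic portrait.

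For ``(b) $\Rightarrow$ (a)'' I would argue case-by-case, using the explicit equations for $X_1(\calP)$ computed in \cite{doyle/faber/krumm:2014} and in the first author's subsequent work. If $X_1(\calP)$ has genus $0$ with a rational point, or genus $1$ with positive $\bbQ$-rank, then $X_1(\calP)(\bbQ)$ is already infinite and producing infinitely many cubic points is immediate from base change. For each higher-genus portrait on the list, I would exhibit an explicit $\bbQ$-rational morphism of degree at most $3$ from $X_1(\calP)$ either to $\bbP^1$ or to an elliptic curve of positive Mordell--Weil rank over $\bbQ$: the fiber of such a morphism over a rational point of the target is a divisor of degree $\leq 3$, and for all but a thin set of such fibers the support is a single cubic point of $X_1(\calP)$. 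In each case I would also need to verify that the resulting points lie off the locus where $f_c$ fails to realize $\calP$ with the prescribed labeling.

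For ``(a) $\Rightarrow$ (b)'' the essential input is the theorem of Abramovich--Harris (refined by Debarre--Fahlaoui and Kadets--Vogt) characterizing smooth projective curves of genus $\geq 2$ with infinitely many cubic points: such a curve must admit a degree-$3$ morphism over $\bbQ$ to $\bbP^1$ or to an elliptic curve of positive $\bbQ$-rank. I would combine this with the monotonicity inherent in the family of dynamical modular curves: any inclusion of portraits $\calP_0 \subseteq \calP$ induces a dominant morphism $X_1(\calP) \to X_1(\calP_0)$, so gonality and genus can only grow as $\calP$ grows. This reduces the problem to checking a finite list of minimal excluded portraits $\calP_0$, and for each such $\calP_0$ one verifies that $X_1(\calP_0)$ is neither $\bbQ$-trigonal nor admits a degree-$3$ map to a positive-rank elliptic quotient---for instance, via canonical-embedding computations, Castelnuovo--Severi-type gonality bounds pulled back along known covers, and a decomposition of the Jacobian of $X_1(\calP_0)$ combined with rank bookkeeping for each elliptic factor.

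The main obstacle will be the ``(a) $\Rightarrow$ (b)'' direction. Enumerating the minimal excluded portraits requires a careful extension of the combinatorial analysis used in the quadratic case of \cite{doyle/krumm:2024}, and showing that each $X_1(\calP_0)$ supports no degree-$3$ parameterization is geometrically delicate: non-trigonality is subtle when the $\QQbar$-gonality equals $3$ but no such map descends to $\bbQ$, and the elliptic-quotient analysis may require rank computations that, in borderline cases, could depend on standard conjectures such as BSD, in the spirit of \cite{stoll:2008}.
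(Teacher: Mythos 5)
Your architecture matches the paper's: for (b) $\Rightarrow$ (a) one exhibits degree-$\le 3$ maps to $\bbP^1$ (Riemann--Roch for the genus-$2$ curves with enough rational points, an explicit trigonal map for the genus-$4$ curves ${\rm 10(1,1)a}$ and ${\rm 12(3,3)}$), and for (a) $\Rightarrow$ (b) one combines Kadets--Vogt with the functorial maps $X_1(\calP') \to X_1(\calP)$ for $\calP \subseteq \calP'$ to reduce to a finite list of minimal portraits, each eliminated by Castelnuovo--Severi, mod-$p$ Jacobian decompositions, Kani's correspondence between elliptic subcovers and elliptic subgroups, and rank bookkeeping (no BSD input is ever needed; every elliptic factor that arises has known rank). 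Two steps in your (a) $\Rightarrow$ (b) outline have genuine gaps. First, the input theorem (Theorem~\ref{thm:KV}) requires genus $>4$, not genus $\ge 2$, and its conclusion is a morphism of degree \emph{at most} $3$; ruling out only degree-$3$ maps leaves the hyperelliptic and bielliptic cases open. The paper closes these by citing the main theorem of \cite{doyle/krumm:2024}: each minimal excluded curve already has finitely many quadratic points, which forbids any degree-$2$ map to $\bbP^1$ or to a positive-rank elliptic curve. Second, the reduction to a \emph{finite} list of minimal excluded portraits is not automatic, since the poset of generic quadratic portraits is infinite and genus monotonicity alone does not terminate the search. The paper first proves that $X_0(n)$---hence every $X_1(\calP)$ containing an $n$-cycle---has finitely many cubic points for all $n \ge 7$; for $n\ge 12$ this rests on the asymptotic inequality $g_n > 2D_0(n)+1$ together with Bousch's theorem that $c : X_0(n)\to\bbP^1$ has full symmetric monodromy (so Castelnuovo--Severi cannot be evaded by factoring through $c$), and for $7 \le n \le 11$ on individual gonality and zeta-function computations. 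Only after the cycle structures are confined to the eight possibilities of Proposition~\ref{prop:cycle_structures} does the portrait-by-portrait check become finite.

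Two smaller corrections. On (b) $\Rightarrow$ (a), several genus-$1$ curves on the list (e.g.\ $X_1({\rm 8(1,1)b})$, which is 11A3) have rank $0$ over $\bbQ$, so the cubic points do not come ``by base change'' from infinitely many rational points; they come from the degree-$3$ function that every elliptic curve over $\bbQ$ possesses. And the verification that the resulting maps $f_c$ actually realize $\calP$ as their full preperiodic portrait is not part of this theorem---that is the content of Theorem~\ref{thm:main_dynamical}, handled separately via Hilbert irreducibility and good reduction in Section~\ref{sec:realizations}.
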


Theorem~\ref{thm:main_dynamical} does not follow immediately from Theorem~\ref{thm:main_curves} due to one important subtlety: Roughly speaking, a general $K$-rational point on $X_1(\calP)$ corresponds to a map of the form $f_c(x) = x^2 + c$ for which $G(f_c,K)$ contains a \emph{subgraph} isomorphic to $\calP$. In order to show that, infinitely often, $G(f_c,K)$ is actually \emph{isomorphic} to $\calP$ requires some additional work. This additional work, which was used in \cite{doyle/krumm:2024} but is quite different from the approach from \cite{faber:2015}, involves Hilbert's irreducibility theorem coupled with a local statement about periodic points for maps of good reduction modulo a prime; the relevant arguments are in Section~\ref{sec:realizations}.

\subsection{Rationality questions}
As in \cite{doyle/krumm:2024}, we also offer a slightly more refined version of Theorem~\ref{thm:main_dynamical}, where we consider the degrees of the infinitely many $c \in\bbQ^{(3)}$ appearing in the statement of Theorem~\ref{thm:main_dynamical}.

Consider, for example, the portrait $\calP = \rm 4(1,1)$. Faber showed in \cite{faber:2015} that there are infinitely many $c \in \bbQ$ for which $G(f_c,\bbQ)\cong \calP$. Now fix such a $c \in \bbQ$. Since the set of preperiodic points for $f_c$ has bounded height, for all but finitely many cubic fields $K$ we will also have $G(f_c,K) \cong \calP$. In this case, we are able to conclude that $\rm 4(1,1) \in \Gamma(3)$ simply because $\rm 4(1,1) \in \Gamma(1)$. In an effort to avoid this sort of triviality, we are interested in the following two questions for each $\calP \in \Gamma(3)$:

\begin{itemize}
    \item Are there infinitely many $c \in \bbQ$ for which there is a cubic field $K$ such that $G(f_c,\bbQ) \subsetneq G(f_c,K) \cong \calP$?
    \item Are there infinitely many $c \in \QQbar$ with $[\bbQ(c):\bbQ] = 3$ such that $G(f_c,\bbQ(c)) \cong \calP$?
\end{itemize}

The next two results answer these questions.

\begin{thm}\label{thm:weakly-cubic}
    For an abstract portrait $\calP$, the following are equivalent:
    \begin{enumerate}
    \item There are infinitely many $c \in \bbQ$ such that $G(f_c,\bbQ) \subsetneq G(f_c,K) \cong \calP$ for some cubic number field $K$.
    \item The portrait $\calP$ is isomorphic to one of the following four portraits from Appendix~\ref{app:portraits}:
        \begin{center}
        \rm
        6(3),\quad 10(3,1,1),\quad  10(3,2),\quad  12(3,3).
        \end{center}
    \end{enumerate}
\end{thm}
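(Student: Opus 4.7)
The plan is to establish the two directions separately, with the main work in showing $(b) \Rightarrow (a)$. For $(a) \Rightarrow (b)$, I would give a Galois-theoretic argument showing that $\calP$ must contain a $3$-cycle, which combined with Theorem~\ref{thm:main_dynamical} and a symmetry argument reduces $\calP$ to one of the four listed portraits. For $(b) \Rightarrow (a)$, I would explicitly parametrize a one-parameter family of $c \in \bbQ$ for each of the four portraits, using Hilbert's irreducibility theorem together with the observation that an irreducible cubic factor of the dynatomic polynomial $\Phi_3(x,c) \in \bbQ[x,c]$ automatically defines a cyclic cubic extension (the Galois action on the cycle must commute with $f_c$, hence restricts to a subgroup of $C_3$).

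For $(a) \Rightarrow (b)$, assume $c \in \bbQ$, $K$ is cubic, and $P \in G(f_c,K) \setminus G(f_c,\bbQ)$; then $[\bbQ(P):\bbQ] = 3$. By induction on $j$, $[\bbQ(f_c^j(P)):\bbQ] = 3$ for every $j \ge 0$: if $f_c^{j+1}(P) \in \bbQ$, then $f_c^j(P)^2 = f_c^{j+1}(P) - c \in \bbQ$, forcing $[\bbQ(f_c^j(P)):\bbQ] \le 2$ and contradicting the inductive hypothesis. Hence the eventual periodic point $Q = f_c^m(P)$ is cubic. Since the fixed and $2$-periodic points of $f_c$ lie in extensions of degree at most $2$, and a cyclic Galois action of order $3$ on a cycle of length $n$ has orbits of size $\gcd(n,3)$, a cubic $Q$ forces $3 \mid n$; combined with the results of Morton \cite{morton:1998} and Flynn--Poonen--Schaefer \cite{flynn/poonen/schaefer:1997} and the fact that only cycle lengths $1,2,3,4$ appear in $\Gamma(3)$ (from Theorem~\ref{thm:main_dynamical}), this yields $n = 3$. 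A standard orbit argument then shows the $3$-cycle lies in a cyclic Galois cubic subfield of $K$. Among portraits in $\Gamma(3)$ containing a $3$-cycle, this leaves 6(3), 8(3), 10(3,1,1), 10(3,2), and 12(3,3). I would exclude 8(3) by noting that $\Gal(K/\bbQ) \cong C_3$ cyclically permutes the three direct cycle-preimages $-P_0, -P_1, -P_2$; so any preimage $\pm\sqrt{-P_i - c} \in K$ forces the analogous preimages of the other two indices into $K$ by Galois equivariance, producing $6$ (not $2$) further preperiodic vertices and contradicting the combinatorics of 8(3).

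For $(b) \Rightarrow (a)$, I would construct an explicit rational family for each of the four portraits. For $\calP =$ 6(3), I would apply Hilbert's irreducibility theorem to an appropriate intermediate quotient of the cover $X_1(\calP) \to \bbA^1_c$ to produce infinitely many $c \in \bbQ$ for which $\Phi_3(x,c)$ acquires an irreducible cubic factor, then use further Hilbert-type genericity to rule out other $K$-rational preperiodic structure. For $\calP =$ 10(3,1,1), I would restrict to the family $c = \alpha - \alpha^2$ with $\alpha \in \bbQ$, which gives rational fixed points $\alpha, 1-\alpha$ and hence $G(f_c,\bbQ) \supseteq$ 4(1,1); Hilbert applied to $\Phi_3(x, \alpha - \alpha^2)$ over $\bbQ(\alpha)$ then supplies a cubic $3$-cycle for infinitely many $\alpha$. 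For $\calP =$ 10(3,2), I would parametrize by $c = -(3+t^2)/4$ with $t \in \bbQ$, realizing a rational $2$-cycle $\{(-1 \pm t)/2\}$ and $G(f_c,\bbQ) \supseteq$ 4(2), and again apply Hilbert.

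The hard case is $\calP =$ 12(3,3), which requires \emph{both} $3$-cycles of $f_c$ to lie in the \emph{same} cubic $K$. Equivalently, the Galois group of $\Phi_3(x,c)/\bbQ$ must specialize to $C_3$ acting diagonally on both cycles rather than to the generic wreath product $C_3 \wr C_2$; this is a non-generic Galois condition to which Hilbert's theorem does not apply directly. My plan is to use Theorem~\ref{thm:main_curves}, which guarantees infinitely many cubic points on the dynamical modular curve $X_1(\calP)$ for $\calP =$ 12(3,3), and to show that infinitely many of them project to rational values of $c$. I would approach this by identifying a rational parametrization of an appropriate component of $X_1(\calP)$ (or of a suitable quotient by the cycle-rotation and cycle-swap symmetries) on which the projection to $c$ is non-constant of small degree, so that generic fibers over rational $c$ contribute a cubic Galois orbit; a final Hilbert-irreducibility check on the remaining preimage and dynatomic polynomials then delivers infinitely many rational $c$ with $G(f_c, K) \cong$ 12(3,3) exactly.
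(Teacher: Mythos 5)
Your direction (a)$\Rightarrow$(b) takes a more dynamical route than the paper (which works with weakly cubic points on $X_1(\calP)$ curve by curve in Section~\ref{sec:FOD}), and most of it is sound: the induction showing the forward orbit of a cubic preperiodic point stays cubic, the exclusion of eventual periods $1$ and $2$, and the Galois-equivariance argument against ${\rm 8(3)}$ all work (the last is close in spirit to the paper's own argument, which instead lands on a $K$-point of $X_1({\rm 10(3)b})$). But there is a genuine gap at the step ``a cubic periodic point forces $3\mid n$.'' Your justification assumes the order-$3$ Galois action preserves the cycle of $Q$, so that its orbit has size $\gcd(n,3)$. For $n=4$ this is exactly what can fail: $f_c$ has \emph{three} $4$-cycles ($D_0(4)=3$), and an orbit--stabilizer analysis shows the consistent configuration is that $\Gal$ permutes the three cycles transitively while the stabilizer of each cycle fixes it pointwise, so that $\Phi_4(c,x)$ factors over $\bbQ$ into four cubics, each meeting every cycle once. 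Thus a cubic point of period $4$ over rational $c$ is combinatorially possible, and your argument does not exclude the portrait ${\rm 8(4)}\in\Gamma(3)$. Ruling it out is genuinely nontrivial: the paper does it by showing $J(X_1({\rm 8(4)}))\cong J(X_1(16))$ has rank $0$ (so infinitely many weakly cubic points would all lie in fibers of a single degree-$3$ map $\varphi$ to $\bbP^1$), invoking Lemma~\ref{lem:map must factor} to force $c$ to factor through $\varphi$, and then computing that the degree-$12$ map $c$ admits no degree-$3$ subcover. Some geometric input of this kind is unavoidable here.

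On (b)$\Rightarrow$(a), your explicit parametrizations for ${\rm 6(3)}$, ${\rm 10(3,1,1)}$, and ${\rm 10(3,2)}$ are correct and amount to rational parametrizations of the intermediate quotients through which $c$ factors with a degree-$3$ top map; this matches the paper's mechanism. Two remarks. First, upgrading ``$G(f_c,K)\supseteq\calP$'' to ``$G(f_c,K)\cong\calP$'' needs more than Hilbert irreducibility alone, since a priori infinitely many covers $X_1(\calP')\to X_1(\calP)$ (one for each possible added cycle length) must be controlled; the paper handles this by restricting $c$ to a $p$-adic residue class and using good reduction (Proposition~\ref{prop:good-red}) to bound periods first, as in the proof of Proposition~\ref{prop:realizing-portraits}. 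Second, for ${\rm 12(3,3)}$ your plan is pointed in a workable direction but misses the key identification: $X_1({\rm 12(3,3)})$ has genus $4$ and is not rational, but the $c$-map factors as the degree-$3$ forgetful projection $X_1({\rm 12(3,3)})\to X_1({\rm 6(3)})\cong\bbP^1$ followed by a degree-$6$ map; one pulls back rational points of $X_1({\rm 6(3)})$ (i.e., parameters $c$ with a rational $3$-cycle), not fibers over rational $c$ (which have degree $18$). With that factorization in hand the Hilbert-plus-good-reduction argument goes through as in the paper.
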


\begin{thm}\label{thm:strictly-cubic}
    For an abstract portrait $\calP$, the following are equivalent:
    \begin{enumerate}
    \item There are infinitely many $c \in \bbQ^{(3)}\smallsetminus\bbQ$ such that $G(f_c,\bbQ(c)) \cong \calP$.
    \item The portrait $\calP$ is isomorphic to one of the $18$ portraits in $\Gamma(3) \smallsetminus \{{\rm 12(3,3)}\}$.
    \end{enumerate}
\end{thm}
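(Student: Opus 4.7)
The direction $(1) \Rightarrow (2)$ of Theorem~\ref{thm:main_dynamical} immediately forces $\calP \in \Gamma(3)$, so the new content of Theorem~\ref{thm:strictly-cubic} reduces to two tasks: realize each of the $18$ portraits $\calP \in \Gamma(3) \setminus \{12(3,3)\}$ by infinitely many strictly cubic $c$, and show that $12(3,3)$ admits only finitely many such strictly cubic realizations.

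For the realization $(2) \Rightarrow (1)$, we revisit the analysis of $X_1(\calP)$ carried out in the proof of Theorem~\ref{thm:main_curves}, case by case. In each of the $18$ cases, the infinite family of cubic points on $X_1(\calP)$ guaranteed by that theorem is produced either from a $\bbQ$-rational parametrization of a genus-$0$ component or from the Mordell--Weil group of a positive-rank elliptic curve of genus $1$. We verify that the induced map to the $c$-line is non-constant on the relevant component, so that cubic parameter values yield strictly cubic $c$-coordinates except on a finite exceptional set. The Hilbert-irreducibility-and-good-reduction argument of Section~\ref{sec:realizations} then upgrades the guaranteed subgraph containment $\calP \hookrightarrow G(f_c,\bbQ(c))$ to the isomorphism $G(f_c,\bbQ(c)) \cong \calP$ for all but finitely many of the resulting $c$.

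For the exclusion of $12(3,3)$, observe that a strictly cubic $c$ with $G(f_c,\bbQ(c)) \cong 12(3,3)$ forces the degree-$6$ dynatomic polynomial $\Phi_3(c,x) \in \bbQ(c)[x]$ to split completely over the cubic field $\bbQ(c)$. The plan is to fit $X_1(12(3,3))$ into a tower
\[
X_1(12(3,3)) \longrightarrow Z \longrightarrow \bbP^1_c,
\]
where $Z$ is the $\bbQ$-rational modular curve parametrizing a single marked $3$-cycle (essentially $X_0(6(3))$) and the first map encodes the splitting of the second $3$-cycle under the generic Galois group $\bbZ/3 \wr S_2$ of $\Phi_3$. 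The cubic points on $X_1(12(3,3))$ guaranteed by Theorem~\ref{thm:main_curves} should arise by pullback of $\bbQ$-rational points on $Z$; since $Z \to \bbP^1_c$ is a $\bbQ$-morphism, all such pulled-back points have rational $c$-coordinate, matching the construction behind Theorem~\ref{thm:weakly-cubic}. Any hypothetical ``primitive'' cubic point on $X_1(12(3,3))$ with strictly cubic $c$ would lie outside this pullback family, and we argue its finiteness by computing the genus of the relevant component of $X_1(12(3,3))$ and invoking Faltings' theorem, together with a direct check that no degree-$3$ cover $X_1(12(3,3)) \to \bbP^1_\bbQ$ compatible with the $c$-projection exists.

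The main obstacle is exactly this last geometric analysis: identifying the precise intermediate quotient $Z$, computing genera via Riemann--Hurwitz applied to the $\bbZ/3 \wr S_2$-cover, and ruling out alternative sources of cubic points whose $c$-coordinates fail to be rational. This parallels the ``trivial'' versus ``non-trivial'' dichotomy used in \cite{doyle/krumm:2024} for the quadratic setting, but is more delicate here because of the richer period-$3$ wreath-product Galois structure.
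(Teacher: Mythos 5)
Your overall architecture matches the paper's (realize the $18$ portraits, exclude $12(3,3)$), but the key step in your realization direction is wrong as stated. Knowing that the map to the $c$-line is non-constant on the component producing the cubic points does \emph{not} imply that those points have irrational $c$-coordinate: on $X_1({\rm 12(3,3)})$ the $c$-map is non-constant and there are infinitely many cubic points, yet all but finitely many of them have $c \in \bbQ$, precisely because the only degree-$3$ maps to $\bbP^1$ factor the $c$-map. What is actually needed, and what the paper proves, is one of the following for each $\calP$: for the $13$ portraits whose cycles all have length at most $2$, the map $c : X_1(\calP) \to \bbP^1$ is a composition of degree-$2$ maps, so a cubic point can never lie over a rational value of $c$ and \emph{every} cubic point is strictly cubic; for $6(3)$, $10(3,1,1)$, $10(3,2)$ one must exhibit a degree-$3$ map to $\bbP^1$ that is \emph{disjoint} from (does not factor) the $c$-map and then apply Hilbert irreducibility together with the good-reduction argument; and for $8(3)$ and $8(4)$ one must separately prove that there are only finitely many cubic points with rational $c$ --- for $8(3)$ this is a genuinely dynamical Galois argument (the three $K$-rational period-$3$ points would be Galois conjugate, forcing a $K$-point on $X_1({\rm 10(3)b})$), and for $8(4)$ it uses the rank-$0$ Jacobian together with Lemma~\ref{lem:map must factor}. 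None of this is captured by checking that ``the induced map to the $c$-line is non-constant.''

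For the exclusion of $12(3,3)$ your idea (all cubic points are pullbacks of rational points on $X_1({\rm 6(3)}) \cong \bbP^1$, over which $c$ factors) is the paper's, but two details are off. Faltings' theorem does not bound cubic points; the correct mechanism is that $J(X_1({\rm 12(3,3)}))$ has rank $0$ over $\bbQ$ (verified via its two genus-$2$ quotients), so all but finitely many cubic points lie in fibers of a degree-$3$ map to $\bbP^1$, and the non-hyperelliptic genus-$4$ curve admits exactly two such maps, namely the two projections to $X_1({\rm 6(3)})$. Moreover your final check is stated backwards: one must verify that every degree-$3$ cover $X_1({\rm 12(3,3)}) \to \bbP^1$ \emph{is} compatible with the $c$-projection (i.e., $c$ factors through it), not that none is --- that compatibility is exactly what forces the $c$-coordinates of the resulting cubic points to be rational.
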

We prove these statements in Section~\ref{sec:FOD} by more carefully analyzing the cubic points on the corresponding dynamical modular curves.

\subsection{Miscellaneous remarks}
Sections~\ref{sec:DMCs-finitely-many} and~\ref{sec:infinitely_many} are largely computational. The many computations required in this paper were carried out in Magma \cite{magma225}; code and output are available as ancillary files with this article's arXiv submission.

We frequently need to refer to specific elliptic curves, especially in the tables that summarize elliptic isogeny factors of certain Jacobian varieties. It will be particularly important to know the ranks (over $\bbQ$) of the Mordell-Weil groups of these curves. When we refer to a specific elliptic curve, we will do so using the labels from Cremona's database \cite{cremona:1997}. For example, the curve we call 32A2 is also labeled 32.a3 in the LMFDB \cite{lmfdb}.

\subsection*{Outline of the paper}

In Section~\ref{sec:DMCs}, we define the dynamical modular curves $X_1(\calP)$ which are the primary focus of this article. We collect in Section~\ref{sec:useful-tools} some general results about algebraic points on algebraic curves, which we then use in Sections~\ref{sec:DMCs-finitely-many} and~\ref{sec:infinitely_many} to determine precisely which dynamical modular curves have infinitely many cubic points, proving Theorem~\ref{thm:main_curves}. Section~\ref{sec:realizations} then contains the proof of Theorem~\ref{thm:main_dynamical}. Finally, we include in Section~\ref{sec:FOD} a discussion of the field of definition of the parameter $c$ corresponding to a point on a curve $X_1(\calP)$ which admits infinitely many cubic points.

\subsection*{Acknowledgments}

The first author's research was partially supported by NSF grant DMS-2302394. The second author's research was partially supported by an NSF Graduate Research Fellowship DGE-2140004.

\section{Dynamical modular curves}\label{sec:DMCs}

In this section, we construct the curves which serve as moduli spaces for quadratic polynomial maps with preperiodic ``level structure."

\subsection{Dynatomic polynomials}\label{sec:dyn-poly}

For a polynomial $f$ defined over a field $K$, the points of period $n$ for $f$ are roots of the polynomial $f^n(x) - x$. However, $f^n(x) - x$ also has among its roots the points of period properly dividing $n$, so there is a factorization
    \[
        f^n(x) - x = \prod_{d\mid n} \Phi_{f,d}(x),
    \]
where the factor $\Phi_{f,d}$ generally has as its roots the points of period $d$ for $f$. By M\"obius inversion, one obtains the formula
    \[
        \Phi_{f,n}(x) = \prod_{d\mid n} (f^d(x) - x)^{\mu(n/d)},
    \]
and we refer to this polynomial as the $n$th {\bf dynatomic polynomial}.

As we are interested in the family $f_c(x) = x^2 + c$, we consider the polynomial in two variables given by
    \[
    \Phi_n(c,x) := \Phi_{f_c,n}(x) \in K[c,x].
    \]
Borrowing notation from the classical modular curves, we let $Y_1(n)$ be the affine plane curve defined by $\Phi_n(c,x) = 0$, and we denote by $X_1(n)$ the normalization of the projective closure of $Y_1(n)$. A general point $(c_0,x_0)$ on $Y_1(n)$ corresponds to a map of the form $f_{c_0}(x) = x^2 + c_0$ together with a marked point $x_0$ of period equal to $n$ for $f_{c_0}$. However, for each $n \ge 2$, there are finitely many exceptional points $(c_0,x_0)$ where the period of $x_0$ for the map $x^2 + c_0$ is a proper divisor of $n$. For example, we have
    \[
    \Phi_2(c,x) = \frac{f_c^2(x) - x}{f_c(x) - x} = x^2 + x + c + 1,
    \]
so one can verify that $(c,x) = (-3/4,-1/2)$ is a point on $X_1(n)$ despite the fact that $-1/2$ is a fixed point for $x^2 - 3/4$.

If $z$ has period $n$ for $f_c$, then $f_c(z)$ also has period $n$ for $f_c$, and from this we obtain an automorphism of $Y_1(n)$ of order $n$ defined by $(c,z) \mapsto (c,f_c(z))$; we write $\sigma_n$ for the extension of this automorphism to $X_1(n)$. We denote the quotients of $X_1(n)$ and $Y_1(n)$ by the automorphism $\sigma_n$ by $X_0(n)$ and $Y_0(n)$, respectively, and we note that---again, with finitely many exceptions for each $n$---points on $X_0(n)$ correspond to maps $f_c$ together with a marked periodic {\it cycle} of length $n$.

For $n \ge 1$, define
    \[
        D_1(n) := \sum_{k\mid n} \mu(n/k) 2^k \quad\text{and}\quad D_0(n) := \frac{D_1(n)}n.
    \]
Then $D_1(n)$ and $D_0(n)$ are the degrees of the maps $c : X_1(n) \to \bbP^1$ and $c : X_0(n) \to \bbP^1$, respectively. These quantities grow exponentially with $n$: it is not difficult to show that
    \begin{equation}\label{eq:D1_bounds}
        2^{n-1} \le D_1(n) \le 2^n, \quad\text{hence}\quad \frac{2^{n-1}}n \le D_0(n) \le \frac{2^n}n,
    \end{equation}
for all $n\ge 1$, with strict inequalities whenever $n \ge 3$.

The points on the curve $X_1(n)$ parametrize maps $f_c$ with a \textit{single} marked periodic point; we now extend this to multiple marked periodic points. Fix a nonincreasing sequence $n_1,n_2,\ldots,n_m$ of positive integers, and assume that each positive integer $n$ appears at most $D_0(n)$ times in the sequence. We denote by $X_1(n_1,n_2,\ldots,n_m)$ the curve whose general points correspond to tuples $(c, \alpha_1,\alpha_2,\ldots,\alpha_m)$ where each $\alpha_i$ has period $n_i$ for the map $f_c$ and no two $\alpha_i$ are in the same orbit under $f_c$.

\begin{ex}\label{ex:(3,3)}
For $n = 3$, we have $D_1(3) = 6$, which implies that a general quadratic polynomial has six points of period $3$ partitioned into two $3$-cycles. A general point on the curve in $\bbA^3$ defined by
    \begin{equation}\label{eq:two3cycles_reducible}
    \Phi_3(c,x) = 0 = \Phi_3(c,y)
    \end{equation}
corresponds to a map $f_c$ together with two points ($x$ and $y$) of period $3$ for $f_c$, but without the additional (Zariski open) restriction that $x$ and $y$ lie in distinct orbits under $f_c$. In fact, the curve defined by \eqref{eq:two3cycles_reducible} has exactly four geometrically irreducible components---a genus-$4$ component birational to $X_1(3,3)$, and three components defined by
    \begin{equation}\label{eq:two3cycles_extra}
        \begin{cases}
            \Phi_3(c,x) &\!\!\!\!= 0\\
            \hfill y &\!\!\!\!= x
        \end{cases}
        ,\quad
        \begin{cases}
            \Phi_3(c,x) &\!\!\!\!= 0\\
            \hfill y &\!\!\!\!= f_c(x)
        \end{cases}
        ,\quad\text{and}\quad
        \begin{cases}
            \Phi_3(c,x) &\!\!\!\!= 0\\
            \hfill y &\!\!\!\!= f_c^2(x)
        \end{cases}
        ,
    \end{equation}
respectively. Note that each curve from \eqref{eq:two3cycles_extra} is isomorphic to $Y_1(3)$.
\end{ex}

\subsection{General dynamical modular curves}\label{sec:genDMC}
We now define our most general notion of dynamical modular curve. We give only an informal description here, and we refer the reader to \cite{doyle:2019} for details.

The structure of the preperiodic points for a map can be illustrated using a functional graph, which we refer to as a ({\it preperiodic}) {\it portrait}. The following definition provides the class of portraits that we will be interested in.

\begin{defn}\label{defn:gen-quad}
    A {\bf generic quadratic portrait} is a directed graph $\calP$ satisfying the following properties:
    \begin{enumerate}
        \item Every vertex of $\calP$ has out-degree $1$ and in-degree $0$ or $2$.
        \item For each $n \ge 1$, the number of $n$-cycles in $\calP$ is at most $D_0(n)$.
        \item If $\calP$ has at least one fixed point, then $\calP$ has exactly two fixed points.
    \end{enumerate}
\end{defn}

\begin{rem}
    The term ``generic quadratic portrait" was adopted in \cite{doyle/krumm:2024}; in the earlier paper \cite{doyle:2019}, the first author had used the term ``strongly admissible".
\end{rem}

Our reason for restricting to such portraits is summarized in the following statement:

\begin{lem}\label{lem:finitely-many-PCF-1/4}
    Fix an integer $d \ge 1$. There are only finitely many $c \in \QQbar$ with $[\bbQ(c) : \bbQ] \le d$ for which there is a number field $K \supseteq \bbQ(c)$ such that $G(f_c,K)$ does not satisfy properties {\rm (a)}, {\rm (b)}, and {\rm (c)} of Definition~\ref{defn:gen-quad}.
\end{lem}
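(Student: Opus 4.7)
The plan is to analyze each of the three conditions in Definition~\ref{defn:gen-quad} separately. The outcome will be that condition~(b) holds automatically for every $c$, condition~(c) fails only at the single value $c = 1/4$, and condition~(a) fails precisely when $f_c$ is post-critically finite; the bulk of the work will be bounding the latter set.

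For condition~(b), the points of exact period $n$ for $f_c$ are among the $D_1(n)$ roots in $x$ of $\Phi_n(c,x)$, so $G(f_c,K)$ can contain at most $D_1(n)/n = D_0(n)$ cycles of length $n$, for any $K$. For condition~(c), the two fixed points of $f_c$ are the roots of $x^2 - x + c$, which coincide if and only if $c = 1/4$; whenever $c \ne 1/4$, the two fixed points are Galois conjugate over $\bbQ(c)$, so $G(f_c,K)$ has exactly $0$ or $2$ fixed points. Both of these exceptional sets are evidently finite and independent of $d$.

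The main structural observation, used for condition~(a), is that over $\Kbar$ every value $y \ne c$ has two distinct $f_c$-preimages $\pm\sqrt{y-c}$, while $y = c$ has the unique preimage $0$. Hence, for a vertex $P \in G(f_c,K)$ with $P \ne c$, the two preimages are preperiodic and Galois conjugate over $K$, so either both are $K$-rational (in-degree $2$) or neither is (in-degree $0$). The only vertex whose in-degree can be $1$ is $P = c$, and this occurs precisely when $c$ is itself a preperiodic point of $f_c$. Since $c = f_c(0)$, this is equivalent to $f_c$ being post-critically finite (PCF).

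The final step, which I expect to be the main technical obstacle, is to show that the set of PCF parameters $c \in \QQbar$ with $[\bbQ(c):\bbQ] \le d$ is finite. My plan is a standard height argument: at any nonarchimedean place $v$ of $\bbQ(c)$ with $|c|_v > 1$, one checks by induction that $|f_c^n(0)|_v = |c|_v^{2^{n-1}}$; at any archimedean place $v$ with $|c|_v > 2$, the estimate $|f_c(x)|_v \ge |x|_v(|x|_v - 1)$ valid for $|x|_v \ge |c|_v$ likewise forces the orbit of $0$ to escape. In either case $0$ is not preperiodic, contradicting the PCF hypothesis. Hence every PCF $c$ is an algebraic integer with $h(c) \le \log 2$, and Northcott's theorem yields the desired finiteness.
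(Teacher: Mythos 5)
Your proposal is correct and takes essentially the same route as the paper: the paper offers no self-contained proof, but its accompanying remark says the argument is that of \cite[Corollary 2.6]{doyle:2019}, namely that the exceptional parameters (which reduce to $c=1/4$ together with the postcritically finite $c$, exactly as in your case analysis of conditions (a)--(c)) form a set of bounded height, citing the proof of \cite[Proposition 4.22]{silverman:2007} --- which is precisely the nonarchimedean/archimedean escape estimate plus Northcott that you carry out.
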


\begin{rem}
    A weaker form of Lemma~\ref{lem:finitely-many-PCF-1/4}, where the number field $K$ is fixed, is stated in \cite[Corollary 2.6]{doyle:2019}. The proof is the same, relying on the fact that the set of exceptional $c \in \QQbar$ is a set of bounded height; see the proof of \cite[Proposition 4.22]{silverman:2007}.
\end{rem}

Given a generic quadratic portrait $\calP$, we denote by $X_1(\calP)$ the smooth projective curve whose general points correspond to tuples $(c, \alpha_1, \ldots, \alpha_k)$, where $\alpha_1, \ldots, \alpha_k$ are preperiodic points for $f_c(x) = x^2 + c$ which, informally, form a portrait isomorphic to $\calP$. We refer to \cite{doyle:2019} for a formal definition of the curve $X_1(\calP)$; here, we give a slightly simpler description that is more in the spirit of \cite{doyle/silverman:2020}.

Let $\calP$ be a generic quadratic portrait, and label its vertices using the integers $1,2,\ldots,N$. For each directed edge $\bse = (i,j)$, define the polynomial
    \[
        F_\bse(c,x_1,\ldots,x_N) := f_c(x_i) - x_j = x_i^2 + c - x_j \in \bbZ[c,x_1,\ldots,x_N].
    \]
Note that, since $\calP$ is a functional graph, there are precisely $N$ edges. Let $U_1(\calP)$ be the subvariety of $\bbA^{N+1}$ cut out by the $N$ equations $F_\bse = 0$ and the $\binom{N}{2}$ Zariski open conditions $x_i \ne x_j$ with $1 \le i < j \le N$. We then take $Y_1(\calP)$ to be the Zariski closure of $U_1(\calP)$ in $\bbA^{N+1}$ and $X_1(\calP)$ to be the smooth projective curve birational to $U_1(\calP)$.

\begin{figure}
    \centering
	\begin{tikzpicture}[scale=1]
	\tikzset{vertex/.style = {}}
	\tikzset{every loop/.style={min distance=10mm,in=45,out=-45,->}}
	\tikzset{edge/.style={decoration={markings,mark=at position 1 with %
	    {\arrow[scale=1.2,>=stealth]{>}}},postaction={decorate}}}
	    
	\node[inner sep=.4mm] (03a) at (0,0) {$1$};
	\node[inner sep=.4mm] (03b) at (-.866,.5) {$2$};
	\node[inner sep=.4mm] (03c) at (-.866,-.5) {$3$};
	\node[inner sep=.4mm] (13a) at (1,0) {$4$};
	\node[inner sep=.4mm] (13b) at (-1.366,1.366) {$5$};
	\node[inner sep=.4mm] (13c) at (-1.366,-1.366) {$6$};

	\node[inner sep=.4mm] (03A) at (3.5,0) {$7$};
	\node[inner sep=.4mm] (03B) at (2.634,.5) {$8$};
	\node[inner sep=.4mm] (03C) at (2.634,-.5) {$9$};
	\node[inner sep=.4mm] (13A) at (4.5,0) {$10$};
	\node[inner sep=.4mm] (13B) at (2.134,1.366) {$11$};
	\node[inner sep=.4mm] (13C) at (2.134,-1.366) {$12$};
	
	\draw[->] (03a) to [bend right=30] (03b);
	\draw[->] (03b) to [bend right=30] (03c);
	\draw[->] (03c) to [bend right=30] (03a);
	\draw[->] (13a) to (03a);
	\draw[->] (13b) to (03b);
	\draw[->] (13c) to (03c);

	\draw[->] (03A) to [bend right=30] (03B);
	\draw[->] (03B) to [bend right=30] (03C);
	\draw[->] (03C) to [bend right=30] (03A);
	\draw[->] (13A) to (03A);
	\draw[->] (13B) to (03B);
	\draw[->] (13C) to (03C);
	\end{tikzpicture}
    \caption{A generic quadratic portrait $\calP$}
    \label{fig:ex-(3,3)}
\end{figure}
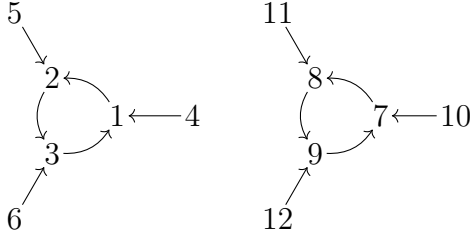

\begin{ex}\label{ex:(3,3)-revisit}
Consider the portrait $\calP$ illustrated in Figure~\ref{fig:ex-(3,3)}. The curve $X_1(\calP)$ is the smooth projective curve birational the subvariety of $\bbA^{13}$ defined by
    \begin{align*}
        f_c(x_1) = f_c(x_5) &= x_2, &
        f_c(x_2) = f_c(x_6) &= x_3,\\
        f_c(x_3) = f_c(x_4) &= x_1, &
        f_c(x_7) = f_c(x_{11}) &= x_8,\\
        f_c(x_8) = f_c(x_{12}) &= x_9, &
        f_c(x_9) = f_c(x_{10}) &= x_7,
    \end{align*}
and $x_i \ne x_j$ for all $1 \le i < j \le 12$.

Alternatively, the approach taken in \cite{doyle:2019} (with the goal of lowering the dimension of the ambient affine space in which the model is defined) would be as follows: First, we note that the portrait $\calP$ in Figure~\ref{fig:ex-(3,3)} is the smallest generic quadratic portrait containing two vertices of period $3$ in different cycles. In the language of \cite{doyle:2019}, the portrait $\calP$ is {\it generated by} any such pair of vertices, say $1$ and $7$. Thus, one obtains a model for $X_1(\calP)$ in $\bbA^3$ by simply taking
\begin{equation}\label{eq:X1(12(3,3))}
    \begin{split}
        \Phi_3(c,x) &= 0\\
        \Phi_3(c,y) &= 0\\
        y &\ne f_c^k(x) \text{ for } k \in \{0,1,2\}.
    \end{split}
\end{equation}
In other words, $X_1(\calP)$ is isomorphic to the curve $X_1(3,3)$; see Example~\ref{ex:(3,3)}.
\end{ex}

\subsection{Morphisms between dynamical modular curves}

The construction $X_1(\cdot)$ is an assignment that takes functional graphs (or, more specifically, generic quadratic portraits) to algebraic curves (over $\bbQ$). The next statement says that this assignment is functorial. For what follows, we use the term {\bf portrait morphism} to mean an {\it injective} graph homomorphism $\calP\hookrightarrow\calP'$ of (generic quadratic) portraits, that is, an injective map from the vertex set of $\calP$ to the vertex set of $\calP'$ that preserves edge relations.

\begin{prop}\label{prop:functoriality}
    Let $\calP$ and $\calP'$ be two generic quadratic portraits, and write $c_\calP$ and $c_{\calP'}$ for the projection-to-$c$ maps from $X_1(\calP)$ and $X_1(\calP')$, respectively, to $\bbP^1$. Suppose there exists a portrait morphism $\psi : \calP \hookrightarrow \calP'$. Then there is a dominant morphism of curves $F_\psi : X_1(\calP') \to X_1(\calP)$, defined over $\bbQ$, satisfying the following properties:
    \begin{enumerate}
        \item The map $c_{\calP'}$ factors through $F_\psi$; more precisely, $c_{\calP'} = c_\calP\circ F_\psi$.
        \item If $\psi$ is not surjective, then $\deg F_\psi \ge 2$.
        \item If $\psi_1$ and $\psi_2$ are distinct portrait morphisms $\calP \hookrightarrow \calP'$, then the corresponding morphisms $F_{\psi_1}$ and $F_{\psi_2}$ are distinct.
    \end{enumerate}
\end{prop}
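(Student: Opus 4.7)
The plan is to construct $F_\psi$ directly from the coordinate descriptions of $Y_1(\calP)$ and $Y_1(\calP')$ given in Section~\ref{sec:genDMC}, then verify the three properties by tracing them through this explicit construction. Label the vertices of $\calP$ as $1,\ldots,N$ and those of $\calP'$ as $1,\ldots,N'$. On the open affine $U_1(\calP')\subseteq\bbA^{N'+1}$, define the map
\[
\tilde F_\psi\colon (c,x_1,\ldots,x_{N'}) \longmapsto (c, x_{\psi(1)},\ldots,x_{\psi(N)}) \in \bbA^{N+1}.
\]
Because $\psi$ preserves edge relations, every defining equation $F_\bse(c,x_{\psi(1)},\ldots,x_{\psi(N)})=0$ for $U_1(\calP)$ is pulled back from a defining equation of $U_1(\calP')$. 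Because $\psi$ is injective, the open conditions $x_{\psi(i)}\ne x_{\psi(j)}$ (for $i\ne j$) are likewise inherited. Hence $\tilde F_\psi$ lands in $U_1(\calP)$, and since $X_1(\calP')$ is a smooth projective curve and $X_1(\calP)$ is projective, $\tilde F_\psi$ extends uniquely to a morphism $F_\psi\colon X_1(\calP')\to X_1(\calP)$ defined over $\bbQ$. Property (a) is immediate: both projection maps $c_{\calP'}$ and $c_\calP$ are given in coordinates by $(c,\ldots)\mapsto c$, and $\tilde F_\psi$ preserves the first coordinate.

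For dominance and property (b), suppose $\psi$ is not surjective, and fix a vertex $v\in\calP'$ not in $\psi(\calP)$. Let $w$ be the unique vertex of $\calP'$ with an edge $v\to w$ in $\calP'$; then $x_v$ satisfies $x_v^2 + c = x_w$ on $U_1(\calP')$. Over a generic point of $U_1(\calP)$, the remaining coordinates $\{x_u : u\notin\psi(\calP)\}$ can be solved recursively in a suitable order along the functional-graph structure of $\calP'\setminus\psi(\calP)$, and at each step the equation $x_u^2+c=x_{w(u)}$ contributes a factor of $2$ to the generic fiber count. The open distinctness conditions $x_u\ne x_j$ exclude only a proper closed subset of each generic fiber, so the nontrivial choice at $v$ persists; hence $F_\psi$ is nonconstant (so dominant, as both curves are geometrically irreducible by the construction in~\cite{doyle:2019}), and moreover $\deg F_\psi\ge 2$.

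For property (c), suppose $\psi_1\ne\psi_2$ are portrait morphisms $\calP\hookrightarrow\calP'$, and pick a vertex $i\in\calP$ with $\psi_1(i)\ne\psi_2(i)$. On $U_1(\calP')$, the $(i+1)$-st coordinates of $\tilde F_{\psi_1}$ and $\tilde F_{\psi_2}$ are the distinct coordinate functions $x_{\psi_1(i)}$ and $x_{\psi_2(i)}$. These cannot agree as regular functions on $U_1(\calP')$, since the distinctness condition $x_{\psi_1(i)}\ne x_{\psi_2(i)}$ is part of the definition of $U_1(\calP')$ and so $x_{\psi_1(i)}-x_{\psi_2(i)}$ is a nonzero regular function; in particular $\tilde F_{\psi_1}\ne\tilde F_{\psi_2}$ as rational maps, and therefore $F_{\psi_1}\ne F_{\psi_2}$.

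The main obstacle I anticipate is cleanly justifying the degree bound in (b): one must check that the ``extra'' coordinates can be solved independently over a generic fiber and that the distinctness conditions do not collapse the two solutions of $x_v^2+c=x_w$ into a single branch. This is where the genericity built into Definition~\ref{defn:gen-quad}, together with the fact that each ``extending'' equation is a genuinely quadratic condition in a new variable, does the essential work; the rest of the proof is bookkeeping that follows transparently from the coordinate model of $X_1(\calP)$.
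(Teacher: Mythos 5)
Your construction of $F_\psi$ via the coordinate projection $(c,x_1,\ldots,x_{N'})\mapsto(c,x_{\psi(1)},\ldots,x_{\psi(N)})$, and your verifications of (a) and (c), are exactly the paper's proof: edge-preservation gives the defining equations, injectivity of $\psi$ gives the open distinctness conditions, and for (c) the functions $x_{\psi_1(i)}$ and $x_{\psi_2(i)}$ are forced apart by the very conditions defining $U_1(\calP')$. The one place you diverge is (b): the paper simply cites \cite[Proposition 3.3]{doyle:2019}, whereas you attempt a direct fiber count. Your sketch there is morally right but has two soft spots. First, the ``recursive solving in a suitable order along $\calP'\setminus\psi(\calP)$'' only makes sense when the new vertices lie in trees hanging off $\psi(\calP)$; if $\calP'\setminus\psi(\calP)$ contains an entire new cycle, there is no vertex at which to start the recursion, and one must instead argue that a generic $f_c$ has a periodic cycle of the relevant length disjoint from the marked orbits (which is where conditions (b) and (c) of Definition~\ref{defn:gen-quad} enter), and that the $n$ choices of marked point on that cycle give $n\ge 2$ points in the fiber (with the fixed-point case handled by the ``exactly two fixed points'' condition). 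Second, ``each step contributes a factor of $2$'' overcounts: since every vertex of a generic quadratic portrait has in-degree $0$ or $2$, whenever both preimages $u,u'$ of a vertex $w$ are new, the choice of $x_u$ forces $x_{u'}=-x_u$, so the pair contributes a single factor of $2$, not two. Neither issue is fatal --- for $\deg F_\psi\ge 2$ you only need one genuinely two-valued choice, which the in-degree-$0$-or-$2$ structure always supplies --- but as written the argument does not cover the new-cycle case, which is presumably why the paper outsources this step.
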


\begin{proof}
    This essentially follows from \cite[Proposition 3.3]{doyle:2019} and \cite[Theorem 10.1(c)]{doyle/silverman:2020}, but we include the short proof in this case for completeness.

    Label the vertices of $\calP$ and $\calP'$ as $\{1,\ldots,n\}$ and $\{1,\ldots,N\}$, respectively, with $n \le N$. We claim that the projection map
        \begin{equation}\label{eq:proj_map}
            \begin{split}
            \pi : \bbA^{N+1} &\longto \bbA^{n+1}\\
            (c,x_1,\ldots,x_N) &\longmapsto (c,x_{\psi(1)},\ldots,x_{\psi(n)})
            \end{split}
        \end{equation}
    restricts to a map from $U_1(\calP')$ to $U_1(\calP)$ with finite fibers; the desired morphism $F_\psi$ is then the extension of this map to $X_1(\calP')$.

    Let $(c,x_1,\ldots,x_N) \in U_1(\calP')$. Because $\psi$ is a portrait morphism, if $(i,j)$ is a directed edge in $\calP$, then $(\psi(i), \psi(j))$ is a directed edge in $\calP'$. Thus, for every relation $f_c(x_i) = x_j$ defining $U_1(\calP)$, we also have $f_c(x_{\psi(i)}) = x_{\psi(j)}$. Furthermore, the conditions $x_{\psi(i)} \ne x_{\psi(j)}$ are met for all $i\ne j$ by injectivity of $\psi$ and the fact that, on the Zariski open subset $U_1(\calP')$, we have $x_k \ne x_\ell$ for all $k \ne \ell$. Thus, $\pi(c,x_1,\ldots,x_N) \in U_1(\calP)$. Finally, the fibers are finite since, for any given $c$, there are only finitely many preperiodic points for $f_c$ with an orbit of length at most $N$. Thus, our claim is proven.

    Statement (a) is immediate from the construction in \eqref{eq:proj_map}, and (c) is straightforward as well: If $\psi_1 \ne \psi_2$, then there exists $1 \le i \le n$ such that $\psi_1(i) \ne \psi_2(i)$, hence $x_{\psi_1(i)} \ne x_{\psi_2(i)}$. Statement (b) is not difficult, but we refer the reader to the proof of \cite[Proposition 3.3]{doyle:2019}.
\end{proof}

We record two useful consequences of Proposition~\ref{prop:functoriality}.
For what follows, the {\bf cycle structure} of a portrait $\calP$ is the nonincreasing sequence of cycle lengths appearing in $\calP$.

\begin{cor}
If $\calP$ has cycle structure $(n_1,\ldots,n_m)$, then there is a morphism $X_1(\calP) \to X_1(n_1,\ldots,n_m)$ defined over $\bbQ$.
\end{cor}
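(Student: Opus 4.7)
The strategy is to apply Proposition~\ref{prop:functoriality} to a natural sub-portrait of $\calP$ and then identify the target curve with $X_1(n_1,\ldots,n_m)$.

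Let $\calP_0\subseteq\calP$ be the sub-portrait consisting of the periodic vertices of $\calP$ together with all of their preimages in $\calP$. Since a periodic vertex has in-degree at least one from its own cycle, Definition~\ref{defn:gen-quad}(a) forces its in-degree in $\calP$ to be exactly two; in particular, $\calP$ always contains a second (``tree'') preimage of each cycle vertex, and $\calP_0$ is itself a generic quadratic portrait. The inclusion $\calP_0\hookrightarrow \calP$ is then a portrait morphism, and Proposition~\ref{prop:functoriality} supplies a morphism $X_1(\calP)\to X_1(\calP_0)$ defined over $\bbQ$.

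It then suffices to identify $X_1(\calP_0)$ with $X_1(n_1,\ldots,n_m)$, generalizing the observation made in Example~\ref{ex:(3,3)-revisit}. I would choose one distinguished vertex $i_k$ from each cycle of $\calP_0$ and consider the projection $(c,x_1,\ldots,x_N)\mapsto (c,x_{i_1},\ldots,x_{i_m})$. On $U_1(\calP_0)$, the image satisfies $\Phi_{n_k}(c,x_{i_k})=0$, and the $x_{i_k}$ lie in pairwise distinct $f_c$-orbits because they lie in distinct cycles. Conversely, given a tuple $(c,\alpha_1,\ldots,\alpha_m)$ with $\alpha_k$ of period $n_k$ in pairwise distinct orbits, one reconstructs the remaining coordinates of the corresponding point on $U_1(\calP_0)$ by taking the other cycle coordinates to be the forward iterates $f_c^j(\alpha_k)$ and each tree preimage to be the negative of its cycle predecessor (the unique second root of $y^2+c=v$). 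This inverse construction gives a birational map, which extends to an isomorphism of the smooth projective models.

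I expect the only mild subtlety to be matching the openness conditions $x_i\ne x_j$ used in the construction of $U_1(\calP_0)$ with the ``distinct orbits'' conditions on $(c,\alpha_1,\ldots,\alpha_m)$; this is immediate from the explicit inverse above. Hence there is no substantial obstacle, which is consistent with the result being stated as a corollary.
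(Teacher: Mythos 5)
Your proof is correct and follows the route the paper intends: the corollary is stated without proof as an immediate consequence of Proposition~\ref{prop:functoriality}, applied to the minimal generic quadratic sub-portrait $\calP_0 \subseteq \calP$ consisting of the cycles together with the extra preimage of each periodic vertex, combined with the identification $X_1(\calP_0) \cong X_1(n_1,\ldots,n_m)$ already illustrated for cycle structure $(3,3)$ in Example~\ref{ex:(3,3)-revisit}. Your checks that $\calP_0$ is itself a generic quadratic portrait and your explicit birational inverse (forward iterates for the cycle coordinates, $-u$ for the tree preimage of the vertex with cycle predecessor $u$) are precisely the details the paper leaves implicit.
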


For a curve $X$ defined over a field $k$, denote by $\Aut_k(X)$ the group of automorphisms of $X$ defined over $k$, and given a dominant morphism $f : X \to Y$ of curves defined over $k$, set
    \[
        \Aut_k(f : X \to Y) := \{\sigma \in \Aut_k(X) : f\circ\sigma = f\}.
    \]
In other words, $\Aut_k(f : X \to Y)$ is isomorphic to the automorphism group of the field extension $k(X)/f^*k(Y)$ induced by $f$.

\begin{prop}\label{prop:auts}
    Let $\calP$ be a generic quadratic portrait. Then
        \[
        \Aut_\bbQ\big(c : X_1(\calP) \to \bbP^1\big) \cong \Aut(\calP).
        \]
\end{prop}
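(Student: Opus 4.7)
The plan is to construct the isomorphism explicitly via the functoriality of Proposition~\ref{prop:functoriality}, then verify injectivity (which is easy) and surjectivity (which requires more care).

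First, I would define a map
\[
\Psi : \Aut(\calP) \longto \Aut_\bbQ\bigl(c : X_1(\calP) \to \bbP^1\bigr), \qquad \sigma \longmapsto F_\sigma,
\]
with $F_\sigma$ as in Proposition~\ref{prop:functoriality}. Since $\sigma$ is a \emph{bijective} portrait morphism, inspection of the projection $\pi$ in \eqref{eq:proj_map} shows that $F_\sigma$ is the coordinate-permuting birational map $(c,x_1,\ldots,x_N) \mapsto (c,x_{\sigma^{-1}(1)},\ldots,x_{\sigma^{-1}(N)})$ on $U_1(\calP)$, which is patently an automorphism over $\bbQ$, and it satisfies $c \circ F_\sigma = c$ by part (a) of the proposition. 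Functoriality of the construction $\sigma \mapsto F_\sigma$ makes $\Psi$ a group homomorphism, and part (c) gives injectivity.

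For surjectivity, let $F \in \Aut_\bbQ(c : X_1(\calP) \to \bbP^1)$ and set $y_i := F^*(x_i) \in K := \bbQ(X_1(\calP))$ for $1 \le i \le N$. Because $F$ is an automorphism of $X_1(\calP)$ over $\bbP^1$, the tuple $(y_1,\ldots,y_N)$ satisfies: (i) for each edge $(i,j)$ of $\calP$, the relation $f_c(y_i) = y_j$ holds in $K$; (ii) the $y_i$ are pairwise distinct elements of $K$; and (iii) they generate $K$ over $\bbQ(c)$. In other words, $(y_1,\ldots,y_N)$ is a $\calP$-tuple in $K$ generating the function field. The goal is to show that there exists $\sigma \in \Aut(\calP)$ such that $y_i = x_{\sigma^{-1}(i)}$ for all $i$, in which case $F = F_\sigma$.

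To prove this, I would specialize at a sufficiently generic $c_0$: the tuples $\bsalpha = (x_i(c_0))$ and $\bsbeta = (y_i(c_0))$ each form a $\calP$-subportrait of $G(f_{c_0},\QQbar)$, and they lie in the fiber $c^{-1}(c_0)$. Invoking Hilbert's irreducibility theorem, one can choose $c_0 \in \bbQ$ so that $c^{-1}(c_0)$ remains a single $\Gal(\QQbar/\bbQ)$-orbit; together with (iii), this forces $\bbQ(\bsbeta) = \bbQ(\bsalpha)$. The crux is then the rigidity claim that the labeled $\calP$-subportraits of $G(f_{c_0},\QQbar)$ lying in the specific number field $\bbQ(\bsalpha)$ are exactly the $|\Aut(\calP)|$ relabelings of $\bsalpha$; granting this, one obtains an element $\sigma_{c_0} \in \Aut(\calP)$ with $\beta_i = \alpha_{\sigma_{c_0}^{-1}(i)}$, and since $\Aut(\calP)$ is finite and the equality $y_i = x_{\sigma^{-1}(i)}$ is a closed condition, a pigeonhole / Zariski density argument promotes a single $\sigma$ from a Zariski-dense set of $c_0$ to the identity of functions on $X_1(\calP)$.

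\textbf{Main obstacle.} The hardest step is the rigidity claim: that any $\calP$-subportrait of $G(f_{c_0},\QQbar)$ defined over the same field as $\bsalpha$ is already a relabeling of $\bsalpha$. Equivalently, one must rule out the possibility that the function field $K$ simultaneously contains two \emph{distinct} copies of $\calP$ inside $G(f_c,\bar K)$, related by a $\bbQ(c)$-automorphism of $K$ that genuinely moves vertices between subportraits rather than within one. I expect this to be handled by exploiting properties (a)--(c) of a generic quadratic portrait in Definition~\ref{defn:gen-quad} together with the construction of $U_1(\calP)$ as the closure of the locus where the $x_i$ are distinct and satisfy precisely the edge relations of $\calP$, which encodes the universal / minimal nature of this copy; alternatively, one can argue at the level of Galois/monodromy groups of $c : X_1(\calP) \to \bbP^1$, identifying $\Aut_\bbQ(c)$ with $N_G(H)/H$ for $H$ the point stabilizer in the monodromy group $G$ and computing that this normalizer quotient agrees with $\Aut(\calP)$.
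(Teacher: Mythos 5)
Your construction of the embedding $\Aut(\calP)\hookrightarrow\Aut_\bbQ\bigl(c : X_1(\calP)\to\bbP^1\bigr)$ via Proposition~\ref{prop:functoriality} is correct and coincides with the paper's argument for that direction. The gap is in surjectivity, at exactly the step you yourself label the crux: the ``rigidity claim'' is asserted, not proved, and neither of the two routes you gesture at is carried out. Moreover, the specialization strategy is harder to complete than your sketch suggests. To know that the only labeled $\calP$-subportraits of $G(f_{c_0},\QQbar)$ defined over the field generated by the specialized tuple are its $|\Aut(\calP)|$ relabelings, you must rule out every strictly larger generic quadratic portrait $\calQ\supsetneq\calP$ occurring over that field; there are infinitely many such $\calQ$ (new cycles can have arbitrary length), so the union of the thin sets attached to the covers $X_1(\calQ)\to X_1(\calP)$ is not a priori thin, and one first needs the good-reduction bound on periods (Proposition~\ref{prop:good-red}) to cut down to finitely many $\calQ$ before Hilbert irreducibility can be invoked. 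That is precisely the machinery of Section~\ref{sec:realizations}, none of which appears in your proposal, so as written the argument assumes the hard part.

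The paper's proof avoids specialization altogether and is much shorter: set $L=\bbQ(X_1(\calP))$; any $\tau\in\Aut(L/\bbQ(c))$ commutes with $f_c$, whose coefficients lie in the fixed field $\bbQ(c)$, and therefore permutes the set of preperiodic points of $f_c$ lying in $L$; since the distinguished tuple $x_1,\dots,x_N$ generates $L$ over $\bbQ(c)$, the resulting permutation determines $\tau$ and exhibits $\Aut(L/\bbQ(c))$ as a subgroup of $\Aut(\calP)$. The one point needing justification---the function-field avatar of your rigidity claim, namely that $L$ contains no preperiodic points of $f_c$ beyond the $x_i$---follows from a degree count rather than from Hilbert irreducibility: for transcendental $c$ the portrait $G(f_c,L)$ is itself generic quadratic, so if it properly contained $\calP$ the corresponding tuple would give an embedding $\bbQ(X_1(\calQ))\hookrightarrow L$ over $\bbQ(c)$ for some $\calQ\supsetneq\calP$ (using irreducibility of $X_1(\calQ)$ from \cite{doyle:2019}), contradicting $\deg F_\psi\ge 2$ in Proposition~\ref{prop:functoriality}(b). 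Replacing your specialization argument with this generic-fiber degree count is the shortest way to repair the write-up.
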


\begin{proof}
    Let $L = \bbQ(X_1(\calP))$ be the function field of $X_1(\calP)$ over $\mathbb{Q}$. The extension $L/\bbQ(c)$ is generated by preperiodic points $x_1,\ldots,x_n$ for $f_c$ that form a portrait isomorphic to $\calP$. Any automorphism of $L/\bbQ(c)$ must commute with the action of $f_c$, hence yields an element of $\Aut(\calP)$. Since the extension is generated by $x_1,\ldots,x_n$, an element of $\Aut(\calP)$ is determined by its action on $x_1,\ldots,x_n$, thus $\Aut_\bbQ(c : X_1(\calP) \to \bbP^1) \cong \Aut(L/\bbQ(c))$ embeds into $\Aut(\calP)$. On the other hand, it follows from Proposition~\ref{prop:functoriality} that every element of $\Aut(\calP)$ yields an automorphism of $c : X_1(\calP) \to \bbP^1$.
\end{proof}

Propositions~\ref{prop:functoriality} and~\ref{prop:auts} give us two (not mutually exclusive) methods for finding morphisms from $X_1(\calP)$ to a curve of lower genus---either let $\calQ$ be a proper subportrait of $\calP$ and consider a map $X_1(\calP) \to X_1(\calQ)$, or take the quotient of $X_1(\calP)$ by a subgroup of $\Aut(\calP)$. These constructions are useful for applying the Castelnuovo-Severi inequality (Theorem~\ref{thm:CS}) as well as for finding elliptic factors of the Jacobian of $X_1(\calP)$. We illustrate with an example.

\begin{figure}
    \centering
	\begin{tikzpicture}[scale=1]
	\tikzset{vertex/.style = {}}
	\tikzset{every loop/.style={min distance=10mm,in=45,out=-45,->}}
	\tikzset{edge/.style={decoration={markings,mark=at position 1 with %
	    {\arrow[scale=1.2,>=stealth]{>}}},postaction={decorate}}}
	    
	\node[inner sep=.4mm] (03a) at (0,0) {$\bullet$};
	\node[inner sep=.4mm] (03b) at (-.866,.5) {$\bullet$};
	\node[inner sep=.4mm] (03c) at (-.866,-.5) {$\bullet$};
	\node[inner sep=.4mm] (13a) at (1,0) {$\bullet$};
	\node[inner sep=.4mm] (13b) at (-1.366,1.366) {$\bullet$};
	\node[inner sep=.4mm] (13c) at (-1.366,-1.366) {$\bullet$};
	\node[inner sep=.4mm] (02a) at (-4.5,-.5) {$\bullet$};
	\node[inner sep=.4mm] (02b) at (-3.5,-.5) {$\bullet$};
	\node[inner sep=.4mm] (12a) at (-5.5,-.5) {$\bullet$};
	\node[inner sep=.4mm] (12b) at (-2.5,-.5) {$\bullet$};
	\node[inner sep=.4mm] (01a) at (-5,.5) {$\bullet$};
	\node[inner sep=.4mm] (11a) at (-6,.5) {$\bullet$};
	\node[inner sep=.4mm] (01b) at (-2.5,.5) {$\bullet$};
	\node[inner sep=.4mm] (11b) at (-3.5,.5) {$\bullet$};
	
	\draw[->] (03a) to [bend right=30] (03b);
	\draw[->] (03b) to [bend right=30] (03c);
	\draw[->] (03c) to [bend right=30] (03a);
	\draw[->] (13a) to (03a);
	\draw[->] (13b) to (03b);
	\draw[->] (13c) to (03c);
	\draw[->] (02a) to [bend right=60] (02b);
	\draw[->] (02b) to [bend right=60] (02a);
	\draw[->] (12a) to (02a);
	\draw[->] (12b) to (02b);
	\draw[->] (01a) to [out=-45, in=45, looseness=10] (01a);
	\draw[->] (01b) to [out=-45, in=45, looseness=10] (01b);
	\draw[->] (11a) to (01a);
	\draw[->] (11b) to (01b);
	\end{tikzpicture}
    \caption{A portrait $\calP$}
    \label{fig:(3,2,1)}
\end{figure}

\begin{ex}
Let $\calP$ be the portrait appearing in Figure~\ref{fig:(3,2,1)}. It will be convenient to think of $\calP$ as the smallest generic quadratic portrait containing points of period $1$, $2$, and $3$; in particular, $X_1(\calP)$ is isomorphic to $X_1(3,2,1)$, hence has an affine model (in $\bbA^4$) of the form
    \[
    \Phi_1(c,x) = \Phi_2(c,y) = \Phi_3(c,z) = 0.
    \]
In other words, $X_1(\calP)$ is birational to the fiber product of $X_1(1)$, $X_1(2)$, and $X_1(3)$ relative to the maps $c : X_1(\cdot) \to \bbP^1$.

Now let $\sigma$ be the automorphism that rotates the $3$-cycle component of $\calP$ by a one-third turn. A general point on the quotient curve $X_1(\calP) / \langle \sigma\rangle$ corresponds to a tuple $(c, x, y, \calC)$ consisting of a quadratic polynomial $f_c$, a fixed point $x$ for $f_c$, a point $y$ of period $2$ for $f_c$, and a {\it cycle} $\calC$ of period-$3$ points. In \cite{morton:1992}, Morton shows that the cycle $\calC$ can be parametrized by the sum
    \[
    t = z + f_c(z) + f_c^2(z)
    \]
of the points in the $3$-cycle, and he further shows that the parameter $t$ satisfies
    \[
        t^2 - 2t + 29 + 16c = 0.
    \]
Thus, the curve in $\bbA^4$ defined by
    \[
        \Phi_1(c,x) = \Phi_2(c,y) = t^2 - 2t + 29 + 16c = 0
    \]
is a model for $X_1(\calP) / \langle\sigma\rangle$. One can verify in Magma that this curve is birational over $\bbQ$ to the elliptic curve with Cremona label 32A2, which has Mordell-Weil rank $0$; this is recorded in the ``$3, 2, 1$" row of Table~\ref{table:no repeated elliptic factors}.
\end{ex}

\section{Useful tools for algebraic curves}\label{sec:useful-tools}

We begin by stating some general results towards determining if $X/K$ has infinitely many cubic points.

\subsection{Background}

If a curve $X/K$ has infinitely many points of degree $d \leq 5$, then Kadets and Vogt \cite{kadets/vogt:2025} show that either $X/\overline{K}$ is a degree-$d$ cover of $\mathbb{P}^1$ or an elliptic curve, or $X/K$ is a ``Debarre-Fahlaoui curve." We use their result for $d=3$:

\begin{thm}[{\cite[Theorem 1.2]{kadets/vogt:2025}}]\label{thm:KV}
    Let $X$ be a smooth projective curve of genus greater than $4$ defined over a number field $K$. If $X$ has infinitely many cubic points over $K$, then $X$ admits a $K$-morphism of degree at most $3$ to $\bbP^1$ or to an elliptic curve with positive rank over $K$.
\end{thm}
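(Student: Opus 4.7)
The plan is to invoke Faltings' theorem on rational points in subvarieties of abelian varieties, applied to the symmetric cube of $X$, and then analyze the resulting abelian-subvariety translates that appear inside the Jacobian.

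First I would pass from cubic points on $X$ to $K$-rational points on $X^{(3)} := \mathrm{Sym}^3 X$. Since $g \geq 5$, Faltings' theorem gives $\#X(K) < \infty$, so all but finitely many cubic points on $X$ are Galois conjugates forming a single degree-$3$ closed point; each such closed point yields a distinct $K$-rational point of $X^{(3)}$, so $X^{(3)}(K)$ is infinite. Let $\phi : X^{(3)} \to J := \Jac(X)$ be an Abel--Jacobi map based at a $K$-rational degree-$3$ class, and set $W_3 := \phi(X^{(3)})$.

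Next I would split on whether $\phi$ is generically finite. If some fiber $\phi^{-1}(\phi(D))$ is positive-dimensional, then it is a $\bbP^r$ with $r \geq 1$ parametrizing a complete linear series $g^r_3$ on $X$; selecting such a fiber over a $K$-rational point of $W_3$ yields a base-point-free pencil defined over $K$ and hence a $K$-morphism $X \to \bbP^1$ of degree $\leq 3$, so we are done. Otherwise $\phi$ is generically finite, $W_3(K)$ is infinite, and Faltings' big theorem supplies a positive-dimensional abelian subvariety $A \subseteq J$ and a translate $c + A \subseteq W_3$ with $A(K)$ Zariski dense in $A$; in particular $\rank A(K) > 0$.

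It remains to produce from $(A, c + A)$ a $K$-morphism $X \to E$ of degree $\leq 3$ to a positive-rank elliptic curve $E$. Points of $c + A$ represent effective degree-$3$ divisors, so a one-parameter subfamily of $A$ sweeps out a pencil of such divisors defining a nontrivial correspondence on $X$; standard results on subvarieties of Jacobians (Abramovich--Harris, Debarre--Fahlaoui) force this correspondence to come from a morphism of $X$ either to $\bbP^1$ (already handled) or to a curve of positive genus, and when $\dim A = 1$ the target is $E = A$ itself, with Galois-stability of $c + A$ descending the resulting map to $K$. The \emph{main obstacle} is the case $\dim A \geq 2$: one must exclude the exotic ``Debarre--Fahlaoui'' configurations in which a higher-dimensional abelian subvariety has a translate inside $W_3$ without $X$ itself admitting a low-degree map to $\bbP^1$ or to an elliptic curve. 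This is precisely where $g > 4$ is essential---in lower genus, genuine Debarre--Fahlaoui examples exist---and handling it requires a careful Brill--Noether and correspondence-theoretic analysis to show that, in the present genus and degree regime, every such translate arises from one of the two structural possibilities in the theorem's statement.
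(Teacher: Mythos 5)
First, a point of comparison: the paper does not prove this statement at all --- it is quoted verbatim from Kadets--Vogt \cite[Theorem 1.2]{kadets/vogt:2025} and used as a black box --- so there is no internal proof to measure your argument against. Judged on its own terms, your outline correctly reproduces the architecture of the Kadets--Vogt (and earlier Harris--Silverman/Abramovich--Harris) argument: pass from cubic points to $K$-points of $\mathrm{Sym}^3X$, apply Faltings/Mordell--Lang to the image $W_3$ in the Jacobian, and split into the case of a positive-dimensional Abel--Jacobi fiber (which yields a $g^1_3$ over $K$, hence a degree-$\le 3$ map to $\bbP^1$) versus a translate $c+A\subseteq W_3$ of a positive-dimensional abelian subvariety with Zariski-dense $K$-points. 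That first branch, and the reduction itself, are sound.

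The genuine gap is that the second branch carries essentially all of the mathematical content of the theorem, and you do not prove it. Concretely: (i) even when $\dim A=1$, the passage from a translate $c+A\subseteq W_3$ to a $K$-morphism $X\to E$ of degree at most $3$ onto a positive-rank elliptic curve is not automatic --- it requires the incidence-correspondence argument (project $\{(x,a):x\in\mathrm{Supp}(D_a)\}\subseteq X\times A$ and control both projection degrees), and the bound $\deg\le 3$, the identification of the target up to isogeny, and the descent of the map to $K$ all need justification rather than assertion. (ii) The case $\dim A\ge 2$ is precisely the Debarre--Fahlaoui phenomenon, and your proposal explicitly defers it to ``a careful Brill--Noether and correspondence-theoretic analysis.'' Showing that for degree-$3$ points every such configuration forces $g\le 4$ (so that the hypothesis $g>4$ excludes it) is the main theorem of Kadets--Vogt and the bulk of their paper; naming the obstacle and noting where the genus hypothesis must enter is a correct roadmap, but it is not a proof.
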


We can often rule out maps of low degree to curves of low genus via the Castelnuovo-Severi inequality; see \cite[Theorem 3.11.3]{stichtenoth:2009}, for example.

\begin{thm}[Castelnuovo-Severi inequality]\label{thm:CS}
    Let $X$, $Y_1$, and $Y_2$ be curves of genus $g$, $g_1$, and $g_2$, respectively. Suppose that for each $i=1,2$ we have a map $\varphi_i : X \to Y_i$ of degree $d_i$. Then at least one of the following must hold:
        \begin{enumerate}[label=(\alph*)]
            \item the inequality 
                \[
                g \le d_1g_1 + d_2g_2 + (d_1 - 1)(d_2 - 1)
                \]
            is satisfied, or
            \item there is a curve $Y$ and a morphism $\psi : X \to Y$ of degree at least $2$ such that both $\varphi_1$ and $\varphi_2$ factor through $\psi$.
        \end{enumerate}
\end{thm}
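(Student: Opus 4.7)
The plan is to translate to function field theory and case-split based on whether the product morphism $\phi := (\varphi_1,\varphi_2) : X \to S := Y_1 \times Y_2$ is birational onto its image. Set $F := k(X)$, $F_i := \varphi_i^{*}k(Y_i) \subseteq F$, so $[F:F_i] = d_i$, and let $L := F_1 F_2$ be the compositum of $F_1, F_2$ inside $F$. Observe that $L = k(\phi(X))$, so the two cases of the theorem correspond exactly to $L \subsetneq F$ (case (b)) and $L = F$ (case (a)).

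In the first case, let $Y$ be the smooth projective model of $L$. The inclusions $F_i \subseteq L \subseteq F$ give rise to morphisms $\varphi_i' : Y \to Y_i$ and $\psi : X \to Y$ defined over $k$, with $\deg\psi = [F:L] \ge 2$ and $\varphi_i = \varphi_i' \circ \psi$, establishing alternative (b). Otherwise $\phi$ is birational onto the curve $C := \phi(X) \subseteq S$, and since $X$ is smooth we have $g \le p_a(C)$; it therefore suffices to bound $p_a(C)$ by the right-hand side of (a) via adjunction on the surface $S$. Denote by $F_1$ and $F_2$ fibers of the two projections $\pi_i : S \to Y_i$; they satisfy $F_1 \cdot F_2 = 1$ and $F_i^2 = 0$. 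Since $\pi_i|_{C}$ agrees with $\varphi_i$ up to birational equivalence, $C \cdot F_{3-i} = \deg(\pi_i|_C) = d_i$, so numerically $C \equiv d_2 F_1 + d_1 F_2$. Similarly, pulling back divisors yields $K_S \equiv (2g_1 - 2)F_1 + (2g_2 - 2)F_2$. The adjunction formula then gives
\[
2p_a(C) - 2 \;=\; C \cdot (C + K_S) \;=\; 2d_1 d_2 + d_1(2g_1 - 2) + d_2(2g_2 - 2),
\]
which simplifies to $p_a(C) = d_1 g_1 + d_2 g_2 + (d_1-1)(d_2-1)$, establishing (a).

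The main subtlety I expect is the intersection theory on $S = Y_1 \times Y_2$ when both $g_i > 0$: the N\'eron--Severi group is generally not spanned by the fiber classes $F_1, F_2$, so $C$ need not be \emph{linearly} equivalent to $d_2 F_1 + d_1 F_2$. The key point, which lets the argument go through without modification, is that the adjunction formula depends only on intersection numbers, and the pairings $C \cdot F_i$, $F_i \cdot F_j$, $K_S \cdot F_i$ determine $C \cdot C$ and $C \cdot K_S$ via numerical equivalence alone. A secondary point requiring care is the need for $X$ to be the \emph{normalization} of $C$, so that the inequality $g \le p_a(C)$ really holds; this is automatic since $X$ is smooth projective and the map $X \to C$ is a birational morphism.
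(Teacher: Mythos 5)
The paper does not prove this statement; it is quoted from \cite[Theorem 3.11.3]{stichtenoth:2009}, where the argument is phrased in terms of function fields. Your geometric route --- split on whether $(\varphi_1,\varphi_2)$ is birational onto its image $C\subseteq Y_1\times Y_2$, and in the birational case bound $p_a(C)$ by adjunction --- is a legitimate classical alternative, and the first half of your argument (the dichotomy via the compositum $F_1F_2$, the construction of $\psi$ in case (b), and the reduction $g\le p_a(C)$ in case (a)) is correct.

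There is, however, a genuine gap at the key step. It is not true that the pairings $C\cdot F_1$ and $C\cdot F_2$ determine $C\cdot C$ up to numerical equivalence: writing $C \equiv d_2F_1 + d_1F_2 + \delta$ with $\delta := C - d_2F_1 - d_1F_2$, one checks that $\delta$ is orthogonal to both fiber classes (this summand of $\operatorname{NS}(Y_1\times Y_2)$ corresponds to $\Hom(\Jac(Y_1),\Jac(Y_2))$ and is generally nonzero), so $C^2 = 2d_1d_2 + \delta^2$ and the correction term $\delta^2$ is invisible to every intersection number you list. Concretely, the diagonal $\Delta$ in $E\times E$ for $E$ elliptic has $\Delta\cdot F_1=\Delta\cdot F_2=1$ but $\Delta^2=0\ne 2$, so your asserted equality $C^2=2d_1d_2$ is false in general. (The computation of $C\cdot K_S$ is unaffected, since $K_S$ genuinely is numerically a sum of fibers.) What the argument needs, and what is true, is the \emph{inequality} $C^2\le 2d_1d_2$: since $\delta\cdot(F_1+F_2)=0$ and $(F_1+F_2)^2=2>0$, the Hodge index theorem gives $\delta^2\le 0$. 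This positivity input is the actual mathematical content of the Castelnuovo--Severi inequality and cannot be recovered by formal bookkeeping of intersection numbers; once it is inserted, your adjunction computation goes through as an inequality, $2p_a(C)-2 = C^2 + C\cdot K_S \le 2d_1d_2 + d_1(2g_1-2)+d_2(2g_2-2)$, which yields alternative (a). (There is also a harmless index slip: with $F_i$ a fiber of $\pi_i$ one has $C\cdot F_i=\deg(\pi_i|_C)=d_i$ rather than $C\cdot F_{3-i}=d_i$; your final fiber class $d_2F_1+d_1F_2$ is nevertheless the correct one, and the end formula is symmetric in any case.)
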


Given a curve $X$, one step in determining which elliptic curves are covered by $X$ involves reducing modulo various primes of good reduction. We will rely on the following computationally useful facts about reduction:
\begin{lem}[{\cite[Corollary 10.1.25]{liu:2002}}]\label{lem:liu_reduction}
    Let $X$ be a smooth projective curve defined over a number field $K$, and let $\frakp$ be a maximal ideal in the ring of integers $\calO_K$. Let $X_\frakp$ be the reduction of $X$ modulo $\frakp$. If the normalization of the projective closure of $X_\frakp$ has the same genus as $X$, then $X$ has good reduction at $\frakp$.
\end{lem}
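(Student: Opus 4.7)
The plan is to leverage the classical inequality between geometric and arithmetic genus, combined with the invariance of arithmetic genus in flat proper families. First I would fix a projective model $\mathcal{X} \subseteq \bbP^N_{\calO_{K,\frakp}}$ of $X$ over the localization of $\calO_K$ at $\frakp$, obtained by spreading out a chosen projective embedding of $X$. Replacing $\mathcal{X}$ by its scheme-theoretic image in $\bbP^N_{\calO_{K,\frakp}}$ if necessary, we may assume $\mathcal{X} \to \Spec\calO_{K,\frakp}$ is flat, with generic fiber $X$ and special fiber a projective curve $X_\frakp$ over the residue field $k(\frakp)$.

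Next, flatness implies that the Euler characteristic of the structure sheaf is locally constant in the family, so the arithmetic genus $p_a(X_\frakp)$ equals $g := g(X)$. The classical inequality for a projective curve $C$ over a field says that its geometric genus (the genus of the normalization of its projective closure) is at most $p_a(C)$, with equality if and only if $C$ is smooth; here I would apply this to $X_\frakp$. The hypothesis of the lemma asserts exactly that the geometric genus of $X_\frakp$ equals $g = p_a(X_\frakp)$, so equality holds and $X_\frakp$ is smooth. Since the family $\mathcal{X} \to \Spec \calO_{K,\frakp}$ is flat with smooth generic fiber and smooth special fiber, it is smooth, which is the definition of good reduction of $X$ at $\frakp$.

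The main obstacle is dealing with possible pathologies of the reduction $X_\frakp$: a priori it could fail to be reduced or irreducible, in which case the comparison between arithmetic and geometric genus must be strengthened (the classical inequality needs to account for multiplicities of components and the ``delta invariants'' at singular points). One has to check that any non-reduced component, any splitting into multiple components, or any singular point of $X_\frakp$ would force the genus of the normalization to drop strictly below $p_a(X_\frakp) = g$, contradicting the hypothesis. Making this precise in full generality is exactly what occupies the surrounding material in Chapter~10 of \cite{liu:2002}, so the cleanest route is to cite the referenced Corollary 10.1.25 directly rather than reproving it here.
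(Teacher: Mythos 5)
The paper does not actually prove this lemma: it is quoted verbatim from Liu, with the citation doing all the work, and you end your proposal by recommending exactly that, so on the bottom line you and the paper agree. The substantive issue is with the argument you sketch along the way. By spreading out a \emph{smooth} projective embedding of $X$, you arrange that the special fibre has arithmetic genus $g$, after which the inequality ``geometric genus $\le$ arithmetic genus, with equality if and only if the curve is smooth'' finishes the job (modulo the reducedness and irreducibility caveats you flag). But that is not the situation in which the lemma is invoked in this paper. Here $X_\frakp$ is the mod-$\frakp$ reduction of an explicit \emph{singular} affine model of $X$ (for instance a high-degree plane model cut out by dynatomic equations) --- this is precisely why the statement speaks of the normalization of the projective closure of $X_\frakp$. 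For such a model the projective closure of the generic fibre already has arithmetic genus strictly larger than $g$ in characteristic zero, so constancy of the Euler characteristic gives $p_a(X_\frakp)$ equal to that larger number, not to $g$, and your equality chain breaks at its first link.

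What is actually needed is that the total delta invariant of the fibres can only increase under specialization, and that equality of the delta invariants of the generic and special fibres forces simultaneous resolution (equivalently, Liu's genus inequalities for the components of the special fibre of a normal fibered surface), together with the treatment of non-reduced and reducible special fibres that you already mention. That is the content of Corollary 10.1.25 and the surrounding material in Liu's Chapter 10, and it is genuinely more than the arithmetic-genus argument you outline. So your instinct to cite rather than reprove is the right one, but you should be aware that the sketch as written establishes a weaker statement than the one the paper uses: the missing ingredient is not only the possible pathologies of $X_\frakp$, but the fact that the model whose reduction is being tested need not have smooth generic fibre.
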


The following is well known; we refer to \cite[Proposition 5]{xarles:2012} for a proof and additional references.
\begin{lem}\label{lem:gonality-point-count}
    Let $X$ be a smooth projective curve defined over a number field $K$, let $\frakp \subset \calO_K$ be a prime of good reduction for $X$, and let $\bbF_q$ be the residue field of $\frakp$. Let $\gamma$ and $\tilde\gamma$ be the gonalities of $X$ and $X_{\bbF_q}$, respectively. Then
        \[
        \gamma \ge \tilde\gamma \ge \frac{|X(\bbF_q)|}{q + 1}.
        \]
\end{lem}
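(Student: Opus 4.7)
The plan is to handle the two inequalities separately, starting with the easier right-hand inequality and then turning to the (slightly more delicate) left-hand one.

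For the inequality $\tilde\gamma \ge |X(\bbF_q)|/(q+1)$, I would argue by a direct fiber-counting argument. By definition of gonality, there exists a nonconstant morphism $\tilde\varphi : X_{\bbF_q} \to \bbP^1_{\bbF_q}$ of degree exactly $\tilde\gamma$. Every $\bbF_q$-point of $X_{\bbF_q}$ lies in the fiber over some $\bbF_q$-point of $\bbP^1_{\bbF_q}$, and each such fiber contains at most $\tilde\gamma$ geometric points, hence at most $\tilde\gamma$ rational points. Summing over the $q+1$ rational points of $\bbP^1_{\bbF_q}$ gives $|X(\bbF_q)| \le \tilde\gamma(q+1)$, which rearranges to the stated inequality.

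For the inequality $\gamma \ge \tilde\gamma$, the strategy is to take a gonality pencil on $X$ and spread it out over an integral model. Since $X$ has good reduction at $\frakp$, there is a smooth proper model $\calX$ over $\Spec\calO_{K,\frakp}$ with generic fiber $X$ and special fiber $X_{\bbF_q}$. A gonality map $\varphi : X \to \bbP^1_K$ of degree $\gamma$ corresponds to a base-point-free linear system $g^1_\gamma$ on $X$, which I would realize as a pair of global sections of a line bundle $\calL$ of degree $\gamma$. The plan is to extend $\calL$ to a line bundle on $\calX$ (possible by properness of $\mathrm{Pic}$ and the fact that $\calO_{K,\frakp}$ is a DVR), extend the two sections by clearing denominators, and restrict to the special fiber to obtain a pair of sections of a degree-$\gamma$ line bundle on $X_{\bbF_q}$. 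After dividing out by any common zeros on the special fiber, what remains is a base-point-free pencil of degree $\gamma' \le \gamma$ on $X_{\bbF_q}$, giving a morphism $X_{\bbF_q} \to \bbP^1_{\bbF_q}$ of degree $\gamma' \le \gamma$, so $\tilde\gamma \le \gamma$.

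The main obstacle in the above is verifying that the reduced pencil is not the constant pencil, i.e.\ that the two sections do not reduce to proportional sections modulo $\frakp$. This can always be arranged by scaling: if the two sections of $\calL$ over $\calX$ happen to become linearly dependent on the special fiber, one can divide by a suitable power of a uniformizer to restore linear independence without changing the morphism on the generic fiber (since multiplying a section by a scalar in $K^\times$ does not change the underlying map to $\bbP^1$). Once this normalization is carried out, the reduction is genuinely nonconstant and the degree bound $\gamma' \le \gamma$ follows from the fact that the degree of a morphism is the degree of the associated line bundle minus the degree of the base locus. This entire argument is standard and is the content of \cite[Proposition~5]{xarles:2012}, to which the reader may be referred.
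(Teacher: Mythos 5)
The paper does not actually prove this lemma: it simply notes that the statement is well known and cites \cite[Proposition 5]{xarles:2012}, so there is no internal argument to compare yours against. Your proof is essentially the standard one found in that reference and is correct. The fiber-counting argument for $\tilde\gamma \ge |X(\bbF_q)|/(q+1)$ is exactly right. For $\gamma \ge \tilde\gamma$, your spreading-out argument works, though the usual packaging is slightly cleaner: take the fiber $D = \varphi^*(\infty)$, an effective degree-$\gamma$ divisor on $X$ with $h^0(D) \ge 2$, let $\calD$ be its Zariski closure in the smooth proper model $\calX$ over $\calO_{K,\frakp}$ (Cartier because $\calX$ is regular), and apply upper semicontinuity of $h^0$ in the flat family to conclude that the degree-$\gamma$ divisor $\calD|_{X_{\bbF_q}}$ also satisfies $h^0 \ge 2$; removing the base locus of the resulting pencil gives a map to $\bbP^1_{\bbF_q}$ of degree at most $\gamma$. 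This sidesteps the one imprecise step in your write-up: when the two reduced sections become proportional, dividing a single section by a power of a uniformizer does not by itself fix the problem (if $\bar s_1 = \lambda \bar s_0$ with $\lambda \ne 0$, you must first replace $s_1$ by $s_1 - \lambda s_0$ and \emph{then} divide by the uniformizer, i.e., saturate the rank-$2$ lattice spanned by $s_0, s_1$ inside $H^0(\calX,\calL)$; this terminates and is a $\GL_2(K)$ change of basis, so it does not alter the map on the generic fiber). With that correction, or with the semicontinuity phrasing, your argument is a complete proof of a statement the paper leaves to the literature.
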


All of our dynamical modular curves are defined over $\bbQ$, so we will be reducing modulo rational primes $p$. Once we have a prime $p$ of good reduction, we can compute the zeta function of the mod-$p$ reduction. Irreducible factors---possibly raised to a nontrivial power---of the numerator of the zeta function correspond to factors (up to isogeny) of the Jacobian \cite[Theorem 1]{tate:1966}. Then, we use the following result of Kani \cite{kani:1994}, but before we state the result, we set some terminology and notation.

Let $X$ be a curve, and let $J(X)$ be its Jacobian. An \textit{elliptic subgroup} of $J(X)$ is an elliptic curve contained in $J(X)$ which is also a subgroup with respect to the addition on $J(X)$. (Note that it is not enough to assume the elliptic curve $E$ is simply a subvariety of $J(X)$, since the group operation on $E$ need not be compatible with the addition law on $J(X)$.) An \emph{elliptic subcover} of $X$ is a morphism from $X$ to an elliptic curve $E$ which does not factor through a nontrivial isogeny of $E$. Note that if $X$ has genus greater than $1$, then any prime-degree morphism $X \to E$ is an elliptic subcover.

\begin{thm}[{\cite[Corollary 4.3]{kani:1994}}]\label{thm: kani elliptic subgroups}
    For a curve $X$ with theta divisor $\Theta_X$ and an integer $d \ge 1$, the map $f \mapsto f^*J(E)$ induces a one-to-one correspondence between isomorphism classes of degree-$d$ elliptic subcovers $f: X \to E$ and elliptic subgroups $E \leq J(X)$ such that $E.\Theta_X = d$. (Here $E.\Theta_X$ denotes the intersection number of $E$ and $\Theta_X$.)
\end{thm}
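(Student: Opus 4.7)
The plan is to construct explicit maps in both directions between isomorphism classes of degree-$d$ elliptic subcovers of $X$ and elliptic subgroups $E \le J(X)$ with $E \cdot \Theta_X = d$, then verify that they are mutually inverse and compute the intersection numbers.

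\textbf{Forward direction.} Given an elliptic subcover $f : X \to E$ of degree $d$, the first task is to show that $f^* : J(E) \to J(X)$ is a closed immersion, so that $f^* J(E)$ is an elliptic subgroup isomorphic to $E$. Writing $f_* : J(X) \to J(E)$ for the Albanese (norm) map, the composition $f_* \circ f^* = [d]_{J(E)}$ implies $\ker f^* \subseteq J(E)[d]$. If this kernel contained $J(E)[n]$ for some $n > 1$, then $f^*$ would factor as $J(E) \xrightarrow{[n]} J(E) \to J(X)$; dualizing and chasing through the polarization identifications would force $f$ to factor through a nontrivial isogeny of $E$, contradicting the assumption that $f$ is an elliptic subcover. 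Hence $f^*$ is injective and $f^*J(E) \cong E$ sits inside $J(X)$ as an elliptic subgroup.

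Next, I would verify the intersection number. The key polarization-theoretic fact is that pulling back the principal polarization of $J(X)$ along $f^* : J(E) \hookrightarrow J(X)$ yields $d$ times the principal polarization of $J(E)$; that is, $(f^*)^*[\Theta_X] \sim d \cdot [\Theta_E]$ in $\Pic(J(E))$. Since $J(E) = E$ is an elliptic curve and $\Theta_E$ is a single point (degree $1$ on $E$), the intersection reduces to a degree calculation:
    \[
    (f^* J(E)) \cdot \Theta_X \;=\; \deg\bigl((f^*)^*\Theta_X\bigr) \;=\; d \cdot \deg \Theta_E \;=\; d.
    \]

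\textbf{Reverse direction.} Given an elliptic subgroup $i : E \hookrightarrow J(X)$ with $E \cdot \Theta_X = d$, apply Poincar\'e's reducibility theorem to obtain a complementary abelian subvariety $A \le J(X)$ such that the addition map $E \times A \to J(X)$ is an isogeny. Composing the principal polarization $J(X) \to J(X)^\vee$ with the dual $i^\vee : J(X)^\vee \to E^\vee \cong E$ of $i$ yields a morphism $p : J(X) \to E$, and composing further with the Abel--Jacobi embedding $\iota : X \hookrightarrow J(X)$ produces $g := p \circ \iota : X \to E$. After factoring out the maximal isogeny through which $g$ factors, the resulting morphism $f : X \to E'$ is an elliptic subcover, and one checks that $f^* J(E')$ recovers the original subgroup $i(E)$ inside $J(X)$. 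To finish, I would verify that these two constructions are mutually inverse up to isomorphism and that the degree of $f$ equals $d$; the latter follows from applying the intersection computation of the forward direction to the recovered subcover. The main technical obstacle is the polarization formula $(f^*)^*\Theta_X \sim d \cdot \Theta_E$. This ultimately comes from the projection formula together with the duality identifications $J(X)^\vee \cong J(X)$ and $E^\vee \cong E$ supplied by the principal polarizations, and the observation that under these identifications the dual of $f^*$ is precisely the norm map $f_*$.
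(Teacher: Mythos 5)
The paper offers no proof of this statement: it is imported wholesale as \cite[Corollary 4.3]{kani:1994} and used as a black box in Lemmas~\ref{lem:distinct elliptic factors of jacobian} and~\ref{lem:repeated elliptic factors of jacobian}, so there is no internal argument to compare yours against. Judged on its own terms, your sketch follows what is essentially Kani's route: the forward map $f \mapsto f^*J(E)$, injectivity of $f^*$ from the subcover hypothesis, the polarization identity $(f^*)^*\Theta_X \equiv d\,\Theta_E$ giving $E.\Theta_X = d$, and the reverse construction $X \hookrightarrow J(X) \xrightarrow{\lambda_{\Theta}} J(X)^\vee \xrightarrow{i^\vee} E^\vee \cong E$ via Poincar\'e reducibility. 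The architecture is sound.

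Two steps need repair or completion. First, your injectivity argument only rules out kernels that contain a full torsion subgroup $J(E)[n]$; but $\ker f^*$ is merely some finite subgroup scheme inside $J(E)[d]$ (e.g.\ a cyclic subgroup of order $2$ contains no $J(E)[n]$ with $n>1$), so the case analysis as written is incomplete. The correct argument is that \emph{any} nontrivial kernel $K$ forces $f^*$ to factor through the quotient isogeny $J(E) \to J(E)/K$, and dualizing that factorization (this is the genuinely delicate point, requiring the Cartier-duality bookkeeping that you compress into ``chasing through the polarization identifications'') produces a nontrivial isogeny $E' \to E$ through which $f$ factors. Second, the assertion that the two constructions are mutually inverse on isomorphism classes is stated but not verified; for the composite in one direction you should observe that $\operatorname{im}(g^*) = i(E)$ under the canonical identification $J(X)^\vee \cong J(X)$ and that $f^*J(E') \supseteq g^*J(E)$ with both sides one-dimensional, hence equal, while the other composite requires checking that the recovered map $X \to E'$ agrees with the original $f$ up to an isomorphism of the target. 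Neither issue is fatal, but as written the proposal is a plan for a proof rather than a proof.
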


In the other direction, if $X/K$ has very low genus, then we expect $X/K$ to have infinitely many cubic points. For example, the following is a straightforward consequence of Riemann-Roch:

\begin{lem}[{\cite[Lemma 2.1]{jeon/kim/schweizer:2004}}]\label{lem:JKS_genus2}
    Let $X/K$ be a curve of genus $2$ with at least three $K$-rational points. Then $X$ admits a degree-$3$ map to $\bbP^1$ defined over $K$. In particular, $X$ has infinitely many cubic points over $K$.
\end{lem}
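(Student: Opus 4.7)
The plan is to use Riemann--Roch to produce a base-point-free pencil of degree $3$ on $X$ defined over $K$, yielding a degree-$3$ morphism $X \to \bbP^1$, and then invoke Hilbert's irreducibility theorem to extract infinitely many cubic points from it.

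Let $P_1, P_2, P_3 \in X(K)$ be three distinct points, and set $D := P_1 + P_2 + P_3$. Since $\deg D = 3 > 2g - 2 = 2$, Serre duality gives $\ell(K_X - D) = 0$, and Riemann--Roch yields $\ell(D) = 2$. Thus $|D|$ is a $K$-rational $g^1_3$, and its moving part defines a morphism $\varphi : X \to \bbP^1$ of degree $3 - \deg B$, where $B$ is the base divisor of $|D|$. Since $X$ has genus $2$, it admits no degree-$1$ map to $\bbP^1$, forcing $\deg B \leq 1$. Moreover, a point $P_i$ is a base point of $|D|$ iff $\ell(D - P_i) = 2$ iff $D - P_i \sim K_X$, which (as $K_X$ is the unique degree-$2$ divisor class on a genus-$2$ curve with $\ell = 2$) occurs iff the remaining two points form a hyperelliptic conjugate pair under the involution $\iota$ of the hyperelliptic cover.

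If $|D|$ is base-point-free then $\varphi$ has degree $3$ and we are done. Otherwise, relabel so that $P_1 + P_2 \sim K_X$; then $\iota(P_1) = P_2 \neq P_1$, so $P_1$ is not a Weierstrass point. Replace $D$ with $D' := 3P_1$, for which Riemann--Roch again gives $\ell(D') = 2$. The point $P_1$ is a base point of $|D'|$ iff $2P_1 \sim K_X$ iff $P_1$ is Weierstrass, which it is not. For any other point $Q$, the condition $\ell(3P_1 - Q) = 2$ forces $3P_1 - Q \sim K_X$, hence $Q + \iota(P_1) \sim 2P_1$; but $\ell(2P_1) = 1$, so the only effective divisor in $|2P_1|$ is $2P_1$ itself, forcing $Q = P_1$, a contradiction. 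Thus $|3P_1|$ is base-point-free and yields a degree-$3$ morphism $\varphi : X \to \bbP^1$ defined over $K$.

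Finally, the function-field extension $K(X) / \varphi^* K(\bbP^1)$ has degree $3$, so its Galois closure has group either cyclic of order $3$ or $S_3$; in either case Hilbert's irreducibility theorem guarantees that for infinitely many $t \in K$, the fiber $\varphi^{-1}(t)$ consists of a single closed point whose residue field is a cubic extension of $K$. These produce distinct cubic points on $X$, yielding infinitely many as required. The main subtlety is the base-point case in step two, which is resolved by passing from $P_1 + P_2 + P_3$ to the triple-point divisor $3P_1$ at a non-Weierstrass rational point.
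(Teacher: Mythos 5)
Your proof is correct. The paper does not prove this lemma itself---it simply cites \cite[Lemma 2.1]{jeon/kim/schweizer:2004} and describes the statement as a straightforward consequence of Riemann--Roch---and your argument is exactly that intended computation: $\ell(P_1+P_2+P_3)=2$ gives a $g^1_3$ over $K$, the only possible obstruction is a base point occurring when two of the points are hyperelliptic conjugates, and you resolve that case correctly by passing to the base-point-free system $|3P_1|$ at the resulting non-Weierstrass rational point, with Hilbert irreducibility then supplying the infinitely many cubic points.
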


\subsection{General lemmas}\label{sec:general-lemmas}

\subsubsection{Degrees of points with respect to two maps}

We prove a suitable version of \cite[Theorem 4.2]{levin:2016} for later application.

\begin{lem}\label{lem:HIT}
    Let $X/K$ be a curve, let $\varphi$ and $\psi$ be two dominant morphisms to $\bbP^1$, and let $d = \deg\varphi$. Then there is a finite set $S$ of maximal ideals in $\calO_K$ for which the following is true: For all maximal ideals $\frakp \notin S$, and for any finite collection of curves $X_1,\ldots,X_n$ with dominant morphisms $\pi_i : X_i \to X$, there are infinitely many points $P\in X(\Kbar)$ such that
        \begin{enumerate}[label={\textup{(\roman*)}}]
        \item $[K(P) : K] = d$,
        \item the fiber $\pi_i^{-1}(P)$ is irreducible over $K$ for all $i = 1,\ldots,n$, and
        \item for all maximal ideals $\frakq\mid\frakp$ in $\calO_{K(P)}$, we have $\ord_\frakq(\psi(P)) \ge 0$.
        \end{enumerate}
\end{lem}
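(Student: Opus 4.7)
The plan is to produce the required degree-$d$ points as fibers $P_t := \varphi^{-1}(t)$ for suitably chosen $t \in K$, combining Hilbert's irreducibility theorem (HIT) with a reduction-theoretic avoidance argument to enforce the integrality condition on $\psi(P_t)$.

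First I would fix the finite set $S$, depending only on $\varphi$ and $\psi$, to contain: the primes of $\calO_K$ at which $X$, $\varphi$, or $\psi$ fails to have a well-behaved integral model; the primes at which the pole divisor $D_\psi \subset X$ of $\psi$ does not reduce to a divisor of the same degree; and the finitely many primes $\frakp$ whose residue field $\bbF_\frakp$ is so small that the image $\bar\varphi(\bar{D}_\psi)$ covers all of $\bbP^1(\bbF_\frakp)$. The key point is that this choice is independent of the auxiliary covers $X_i$, as required by the order of quantifiers in the statement.

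Now fix $\frakp \notin S$ and any finite collection of dominant morphisms $\pi_i : X_i \to X$. Applying HIT simultaneously to the function field extensions $K(X)/K(\varphi)$ and each $K(X_i)/K(\varphi\circ\pi_i)$ produces a thin set $T \subset \bbP^1(K)$ such that for every $t \in \bbP^1(K) \setminus T$, the fiber $P_t := \varphi^{-1}(t)$ is a single closed point of $X$ of degree $d$, giving (i), while each $(\varphi\circ\pi_i)^{-1}(t) = \pi_i^{-1}(P_t)$ is a single closed point of $X_i$ of degree $d\cdot\deg\pi_i$, giving (ii). For (iii), let $B_\frakp$ denote the set of $\bbF_\frakp$-rational points in $\bar\varphi(\bar{D}_\psi)$; for any $t \in \calO_K$ with $\bar{t} \in \bbP^1(\bbF_\frakp) \setminus B_\frakp$, the reduction of $P_t$ modulo any $\frakq \mid \frakp$ in $\calO_{K(P_t)}$ avoids $\bar{D}_\psi$, so $\bar\psi$ is regular there and $\ord_\frakq(\psi(P_t)) \ge 0$.

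Finally I would verify that infinitely many $t \in \calO_K$ simultaneously lie outside $T$ and reduce into $\bbP^1(\bbF_\frakp) \setminus B_\frakp$. Since $\frakp \notin S$ guarantees at least one good residue class modulo $\frakp$, and since $T$ has density zero within any such residue class (e.g., by Serre's quantitative form of HIT), infinitely many admissible $t$ remain, and distinct values of $t$ yield distinct points $P_t$. The main obstacle is precisely the order of quantifiers: $S$ must be fixed before the $X_i$ are introduced, whereas $T$ depends on them. This is resolved by the observation that the covers $\pi_i$ enter only through HIT, and no thin set can saturate a fixed residue class of $\calO_K$ modulo a prime outside $S$, so the reduction-theoretic avoidance built into $S$ always leaves room to satisfy (iii) regardless of the choice of $X_i$.
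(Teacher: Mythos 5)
Your proposal is correct, and it handles the delicate quantifier order exactly as the paper does: both arguments come down to the fact that a fixed residue class modulo a prime $\frakp \notin S$ is not contained in any thin subset of $K$, so the thin set produced by Hilbert irreducibility --- which does depend on the covers $\pi_i$ --- can always be avoided after $S$ has been fixed. Where you genuinely diverge is in the mechanism for condition (iii). You spread $X$ and $\psi$ out over $\calO_K$, put the primes of bad behavior into $S$, and force the reduction of $P$ to miss the reduced pole divisor of $\psi$ by choosing $\varphi(P)$ in a residue class of $\bbP^1(\bbF_\frakp)$ disjoint from the image of that divisor; this is why you must also discard the finitely many primes whose residue field is too small. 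The paper instead factors $\varphi$ and $\psi$ through a maximal common quotient $\pi : X \to Y$, takes a plane model $F(x,y)=a_{d'}(x)y^{d'}+\cdots+a_0(x)=0$ of $Y$, defines $S$ as the primes where some coefficient of some $a_i$ is non-integral or where $a_{d'}(x_0)$ is a non-unit for one fixed $x_0\in\calO_K$, and reads off integrality of $\psi(P)$ from the fact that it is a root of a polynomial with $\frakp$-integral coefficients and unit leading coefficient. Your route is more geometric but leans on a spreading-out step that you state only informally (one must justify that the pole divisor of $\psi$ acquires no vertical components on the model, so that avoiding its reduction really yields $\ord_\frakq(\psi(P))\ge 0$); the paper's route is entirely explicit commutative algebra, at the cost of the preliminary factorization through $Y$. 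Both are valid and both produce a set $S$ depending only on $\varphi$ and $\psi$, which is the point of the lemma.
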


\begin{proof}
Let $\pi : X \to Y$ be a morphism of maximal degree such that both $\varphi$ and $\psi$ factor through $\pi$, and let $\varphi',\psi' : Y \to \bbP^1$ be such that
    \[
        \varphi = \varphi' \circ \pi \quad\text{and}\quad \psi = \psi' \circ\pi.
    \]
Let $e, d' \ge 1$ be the degrees of $\pi$ and $\varphi'$, respectively; in particular, $d = d'e$.

Let $x$ and $y$ be the elements of the function field of $Y$ corresponding to the maps $\varphi'$ and $\psi'$, respectively. Then, since $\varphi'$ and $\psi'$ do not factor through a common map of degree at least $2$, the curve $Y$ has a plane model defined by the vanishing of a polynomial of the form
    \[
        F(x,y) = a_{d'}(x)y^{d'} + a_{d'-1}(x)y^{d'-1} + \cdots + a_1(x)y + a_0(x) \in K[x][y].
    \]

Choose $x_0 \in \calO_K$ such that $a_{d'}(x_0) \ne 0$, and let $S$ be the set of maximal ideals $\frakP \subset \calO_K$ such that either $\ord_\frakP(a_{d'}(x_0)) \ne 0$ or one of the coefficients of at least one $a_i(x) \in K[x]$ has negative $\frakP$-adic valuation. Note that $S$ is a finite set. Now let $\frakp$ be any maximal ideal not in $S$, and let
    \[
    [x_0]_\frakp := \{t \in K : t\equiv x_0 \Mod \frakp\}
    \]
be the residue class of $x_0$ modulo $\frakp$.

By Hilbert Irreducibility, if $\calT \subset K$ is a thin set (in the sense of Serre), then $[x_0]_\frakp \smallsetminus \calT$ is infinite.
Thus, if we let $\pi_i : X_i \to X$ with $1\le i \le n$ be a collection of maps as in the statement of the lemma, then the set
    \[
    \Sigma := \{t \in [x_0]_\frakp : (\varphi\circ\pi_i)^{-1}(t) \text{ is irreducible for all } i = 1,\ldots,n\}
    \]
is infinite. We claim that every point in the infinite set $\varphi^{-1}(\Sigma) \subset X(\Kbar)$ satisfies the three conditions of the lemma.

Thus, let $P \in \varphi^{-1}(\Sigma)$. For every $i = 1,\ldots,n$ and every preimage $Q_i\in\pi_i^{-1}(P)$, we have
    \[
        [K(P) : K] = \deg \varphi = d \quad\text{and}\quad [K(Q_i) : K(P)] = \deg\pi_i,
    \]
so that conditions (i) and (ii) are satisfied.
Finally, write $t = \varphi(P)$ and $u = \psi(P)$, so that $F(t,u) = 0$. Note that $t \in [x_0]_\frakp$ by assumption, so each $a_i(t)$ is integral at $\frakp$ and $a_{d'}(t)$ is a $\frakp$-adic unit. In particular, this implies that $u$ is integral over each maximal ideal $\frakq \subset \calO_{K(P)} = \calO_{K(u)}$ lying over $\frakp$, so condition (iii) is satisfied as well.
\end{proof}

\subsubsection{Factoring maps between curves}

We would like to thank James Rawson for suggesting the following short lemma, which allows us to study the image of degree-$d$ points arising from a map $X \to Y$ under a second map $X \to Z$. See  \cite{derickx/rawson:2025} for the case $Z = \mathbb{P}^1$.

\begin{lem}\label{lem:map must factor}
    Let $X/K$, $Y/K$, and $Z/K$ be curves with morphisms $\theta: X \to Y$ and $\pi: X \to Z$. Suppose that for infinitely many $y \in Y$, $\pi(\theta^{-1}(y))$ is a single geometric point. Then $\pi$ factors through $\theta$.
\end{lem}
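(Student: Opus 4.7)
The plan is to realize the desired factorization through the image of the product morphism $\phi := (\theta, \pi) : X \to Y \times Z$ and then show that the first projection out of this image is birational. I may assume $\theta$ is non-constant (otherwise the hypothesis forces $\pi$ also to be constant, and the factorization is trivial). Let $W \subseteq Y \times Z$ be the image of $\phi$; then $W$ is an irreducible curve defined over $K$, and after replacing it by its normalization if necessary, the first projection $p_1 : W \to Y$ is a dominant morphism of smooth projective curves over $K$ of some degree $d \ge 1$.

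The key step is to translate the hypothesis into a statement about fibers of $p_1$: for a geometric point $y \in Y(\overline{K})$, the fiber
\[
p_1^{-1}(y) \;=\; \{(y, \pi(x)) : x \in \theta^{-1}(y)\} \;\subseteq\; W(\overline{K})
\]
has cardinality exactly $|\pi(\theta^{-1}(y))|$. Thus the hypothesis says that $p_1$ has infinitely many geometric fibers of size one. But any degree-$d$ morphism of smooth projective curves has all but finitely many geometric fibers of size exactly $d$, so we must have $d = 1$, i.e., $p_1$ is birational.

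With $p_1$ birational, I would define $\psi := p_2 \circ p_1^{-1} : Y \dashrightarrow Z$, where $p_2 : W \to Z$ is the second projection. Since $Y$ is a smooth projective curve, $\psi$ extends uniquely to a morphism $Y \to Z$, and all of the intermediate objects $W$, $p_1$, $p_2$, and the birational inverse of $p_1$ are defined over $K$, so $\psi$ is as well. For $x$ in a dense open subset of $X$ one has $\psi(\theta(x)) = p_2(p_1^{-1}(\theta(x))) = p_2(\phi(x)) = \pi(x)$, and this equality between morphisms of smooth projective curves then extends to all of $X$. I do not anticipate any serious obstacle: the entire argument reduces to the standard fact that a dominant morphism of smooth projective curves is birational if and only if its generic geometric fiber is a single point.
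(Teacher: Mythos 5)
Your proof is correct and takes essentially the same route as the paper's: both form the image $W$ of $(\theta,\pi)$ in $Y\times Z$, show that the first projection $p_1\colon W\to Y$ is birational, and recover $\pi$ as $p_2\circ p_1^{-1}\circ\theta$. The only cosmetic difference is that you get birationality by observing that infinitely many geometric fibers of $p_1$ are singletons, whereas the paper shows $\deg(X\to W)\ge\deg\theta$ (hence $=\deg\theta$) and uses multiplicativity of degrees---two equivalent ways of saying the same thing.
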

\begin{proof}
    Consider the map $\theta \times \pi: X \to Y \times Z$, let $V$ be the image of $X$ in $Y \times Z$, and let $\gamma: X \to V$ be the map induced by $\theta \times \pi$.
    By assumption, for infinitely many $y \in Y$, there exists $z \in Z$ such that $\pi(\theta^{-1}(y)) = \{z\}$, so that the fiber $\gamma^{-1}((y , z))$ contains $\theta^{-1}(y)$,    
    and thus has degree (as a divisor) at least $\deg \theta$. Thus, there are infinitely many $v \in V$ such that $\gamma^{-1}(v)$ has degree at least $\deg \theta$; in other words, the degree of $\gamma$ is at least the degree of $\theta$. But $\theta$ factors through $\gamma$, so $\theta$ and $\gamma$ have the same degree.
    
    Finally, let $\pi_Y$ and $\pi_Z$ be the projection maps from $Y \times Z$ to $Y$ and $Z$, respectively. Since $\theta = \pi_Y \circ \gamma$, and since $\deg\theta = \deg\gamma$, the restriction $\pi_Y|_V$ is a birational map, hence $\pi = \pi_Z \circ \pi_Y|_V^{-1} \circ\theta$.
\end{proof}

\subsubsection{Curves with finitely many cubic points}

We now provide some sufficient conditions, computable in Magma, for $X/K$ to have finitely many cubic points. Theorem ~\ref{thm:KV} shows that if the genus of $X$ is greater than $4$ and $X$ does not admit a degree-$3$ map to a curve of genus at most $1$, then $X/K$ has finitely many cubic points. Often Theorem~\ref{thm:CS} is enough to rule out degree-$3$ maps to $\mathbb{P}^1$, and thus we are primarily concerned with maps of degree $3$ to an elliptic curve.

If the isogeny decomposition of $J(X)$ does not contain an elliptic curve, then $X$ does not admit a degree-$3$ map to an elliptic curve. (In fact, $X$ does not admit \emph{any} maps to an elliptic curve in this case.) When the isogeny decomposition of $J(X)$ does contain an elliptic curve $E$, Theorem~\ref{thm: kani elliptic subgroups} shows that there will exist a map $X \to E$, and we must determine its degree.

\begin{lem}\label{lem:distinct elliptic factors of jacobian}
    Let $X/K$ be a curve of genus $g > 1$, and fix an integer $d > 1$. Suppose that the isogeny decomposition of $J(X)$ is  $E_1 \times \cdots \times E_n \times A$, where the $E_i$ are pairwise non-isogenous elliptic curves and $A$ does not contain any elliptic curves. Further, suppose that for each $i$ with $1\le i\le n$ there exists an elliptic curve $F_i$ isogenous to $E_i$ and a morphism $\varphi_i: X \to F_i$ of degree coprime to $d$. Then, $X$ does not admit a degree-$d$ morphism to any elliptic curve.
\end{lem}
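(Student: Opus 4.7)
The plan is to argue by contradiction: suppose that $\psi: X \to E$ is a morphism of degree $d$ to an elliptic curve $E$, and show that the elliptic subgroup of $J(X)$ cut out by $\psi$ must coincide with the one cut out by some $\varphi_i$, forcing the two degrees to agree and contradicting the coprimality hypothesis. The main tools will be Theorem~\ref{thm: kani elliptic subgroups} (Kani's correspondence) and Poincar\'e complete reducibility.

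First I would reduce to the case where both $\psi$ and each $\varphi_i$ are elliptic subcovers. Factor $\psi = \alpha \circ \tilde\psi$, where $\tilde\psi: X \to \tilde E$ is an elliptic subcover and $\alpha: \tilde E \to E$ is an isogeny; then $\deg \tilde\psi$ divides $d$. Similarly, each $\varphi_i$ factors as $\beta_i \circ \tilde\varphi_i$ with $\tilde\varphi_i: X \to \tilde F_i$ an elliptic subcover whose degree divides $\deg \varphi_i$. In particular $\gcd(\deg \tilde\psi, \deg \tilde\varphi_i) = 1$, since these divide the coprime integers $d$ and $\deg \varphi_i$.

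The heart of the argument is to identify the pullback subgroups inside $J(X)$. Set $B := \tilde\psi^* J(\tilde E)$ and $B' := \tilde\varphi_i^* J(\tilde F_i)$; since the pullback maps have finite kernel (composition with the norm map is multiplication by the degree), $B$ and $B'$ are one-dimensional abelian subvarieties of $J(X)$, isogenous to $\tilde E$ and $\tilde F_i$, respectively. By Poincar\'e complete reducibility applied to $J(X) \sim E_1 \times \cdots \times E_n \times A$, every simple abelian subvariety of $J(X)$ is isogenous to a simple factor of the decomposition. Since $A$ contains no elliptic curves, none of its simple factors are elliptic, so $B$ must be isogenous to some $E_i$; I would then pair $B$ with the subgroup $B'$ arising from that particular $\varphi_i$, which is also isogenous to $E_i$. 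The key multiplicity-one observation is now that, because $E_i$ appears exactly once in the isogeny decomposition, the $E_i$-isotypic component of $J(X)$ (the sum of all abelian subvarieties isogenous to $E_i$) is itself one-dimensional. Since $B$ and $B'$ are both one-dimensional subvarieties contained in this one-dimensional component, they must each equal the component, and in particular $B = B'$ as abelian subvarieties of $J(X)$.

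To finish, I would apply Theorem~\ref{thm: kani elliptic subgroups}: the intersection number $B \cdot \Theta_X$ recovers the degree of the unique elliptic subcover associated with $B$, so $\deg \tilde\psi = B \cdot \Theta_X = B' \cdot \Theta_X = \deg \tilde\varphi_i$. Calling this common value $k$, we have $k \mid \gcd(d, \deg \varphi_i) = 1$, hence $k = 1$. But then $\tilde\psi$ would be a birational morphism from $X$ to an elliptic curve, forcing $g(X) = 1$ and contradicting $g > 1$. The main obstacle, in my view, is the multiplicity-one step: one has to justify that two one-dimensional abelian subvarieties of $J(X)$ that are each isogenous to the same multiplicity-one simple factor $E_i$ must actually coincide as subvarieties, and hence have equal intersection numbers with $\Theta_X$, rather than merely being isogenous through some ambient automorphism.
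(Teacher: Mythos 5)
Your proof is correct and follows essentially the same route as the paper's: both arguments combine Kani's correspondence with the observation that, since each $E_i$ occurs with multiplicity one in $J(X)$, there is a unique elliptic subgroup (hence a unique elliptic subcover) attached to each $E_i$, and then derive the contradiction $\gcd$-divisibility forces the subcover to have degree $1$. The multiplicity-one step you flag as the main obstacle is exactly what the paper establishes, just via explicit projections $J(X)\to A$ and $J(X)\to E_i$ rather than your isotypic-component language; both justifications are valid.
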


\begin{proof}
    We first show that the only elliptic subgroups of $J(X)$ are (up to isogeny) the $E_i$. If $E \leq J(X)$ is an elliptic subgroup, then as $A$ does not contain any 1-dimensional subgroups, the image of $E$ under the projection $\pi_A: J(X) \to A$ must be the identity. Thus, $E \leq \pi^{-1}_A(0) = E_1 \times \cdots \times E_n \times 0$. For each $i$ with $1 \le i\le n$, the projections $\pi_i: J(X) \to E_i$ give group homomorphisms $\theta_i: E \to E_i$, which are isogenies if and only if they are non-constant. As the $E_i$ are non-isogenous, we see that at most one---hence \emph{exactly} one---of the maps $\theta_i$ is non-constant. On the other hand, $\theta_i(E)$ is a subgroup of $E_i$, so if $\theta_i$ is constant, then $E$ is contained in the kernel of $\theta_i$. Intersecting all the kernels containing $E$, we see that there must exist a unique $j$ such that $E = E_j$.
    
    By Theorem~\ref{thm: kani elliptic subgroups}, there is a one-to-one correspondence between isomorphism classes of degree-$e$ elliptic subcovers $X \to E$ and elliptic subgroups $E \leq J(X)$ such that $E . \Theta_X = e$. Thus, for each $i = 1,\ldots,n$ there is a unique morphism $\psi_i : X \to E_i$ which does not factor through a nontrivial isogeny of $E_i$; furthermore, any map $X \to E$ for $E$ an elliptic curve must factor through one of the $\psi_i$. This implies that, for each $i = 1,\ldots,n$, the degree of $\psi_i$ is coprime to $d$, as the degree of $\psi_i$ must divide the degree of the map $\phi_i : X \to F_i$. But every map to an elliptic curve would factor through one of the $\psi_i$, so there cannot be a degree-$d$ map to an elliptic curve.
\end{proof}

When the isogeny decomposition of $J(X)$ contains repeated elliptic factors, the situation is more complicated. Indeed, suppose that $J(X)$ contains an abelian subvariety isogenous to $E^2$. By Theorem~\ref{thm: kani elliptic subgroups}, maps to elliptic curves are equivalent to elliptic subgroups of $J(X)$, and $E^2$ contains infinitely many elliptic subgroups: let $m$ and $n$ be integers and consider the map $\theta_{mn}: E \to E \times E$ given by multiplication by $m$ on the first factor and multiplication by $n$ on the second. The image of $\theta_{mn}$ is a subgroup of $E \times E$ isogenous to $E$. Varying $m$ and $n$ gives infinitely many elliptic subgroups of $E^2$.
Instead of attempting to classify the degrees of the infinitely many maps arising from these infinitely many subgroups, we instead note that if $X \to F$ corresponds to a subgroup of $E^2$, then $F$ is isogenous to $E$, and thus if $E$ has rank 0 over $\mathbb{Q}$, so does $F$.

\begin{lem}\label{lem:repeated elliptic factors of jacobian}
    Let $X/K$ be a curve of genus $g > 4$ with finitely many quadratic points. Suppose that $X/K$ does not admit a degree-$3$ morphism to $\mathbb{P}^1$ and that the isogeny decomposition of $J(X)$ is
        \[
        E_1^{q_1} \times \cdots \times E_{m}^{q_m} \times E_{m+1} \times \cdots \times E_{n} \times A,
        \]
    where the $E_i$ are pairwise non-isogenous elliptic curves and $A$ does not contain any elliptic curves. Further, suppose that
    \begin{itemize}
        \item for each $i$ with $1 \leq i \leq m$, $E_i/K$ has rank 0, and
        \item for each $i$ with $m+1 \leq i \leq n$, there exists an elliptic curve $F_i$ isogenous to $E_i$ and a morphism $\phi_i: X \to F_i$ of degree not divisible by 3.
    \end{itemize}
    Then, $X/K$ has finitely many degree-$3$ points.
\end{lem}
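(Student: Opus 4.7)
The plan is to argue by contradiction via Theorem~\ref{thm:KV}, systematically eliminating each low-degree map it produces. If $X/K$ has infinitely many cubic points, then $X$ admits a $K$-morphism of degree at most $3$ to either $\mathbb{P}^1$ or a positive-rank elliptic curve. Since $g(X) > 4$, Faltings' theorem makes $X(K)$ finite, ruling out any degree-$1$ map. A degree-$2$ cover $X \to Y$, with $Y = \mathbb{P}^1$ or $Y$ a positive-rank elliptic curve, would pull back the infinitely many points of $Y(K)$ to yield infinitely many quadratic points on $X$, since only finitely many fibers can consist entirely of $K$-rational points; this contradicts the hypothesis that $X/K$ has only finitely many quadratic points. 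A degree-$3$ map to $\mathbb{P}^1$ is excluded by hypothesis. Thus the only case I need to rule out is a degree-$3$ morphism $\varphi : X \to E$ defined over $K$, with $E$ an elliptic curve of positive rank over $K$.

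By Theorem~\ref{thm: kani elliptic subgroups}, such a $\varphi$ corresponds to a $K$-rational elliptic subgroup of $J(X)$, and the given isogeny decomposition forces this subgroup --- hence $E$ itself --- to be $K$-isogenous to some $E_i$. If $1 \leq i \leq m$, then since Mordell-Weil rank is invariant under $K$-isogeny, $\operatorname{rank} E(K) = \operatorname{rank} E_i(K) = 0$, contradicting the positive-rank hypothesis on $E$. If $m+1 \leq i \leq n$, then $E_i$ appears with multiplicity one in the decomposition, so the argument from the proof of Lemma~\ref{lem:distinct elliptic factors of jacobian} shows that there is a unique elliptic subcover $\psi_i : X \to E_i$ not factoring through a nontrivial isogeny of $E_i$, through which every morphism from $X$ to any elliptic curve in the $K$-isogeny class of $E_i$ factors. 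In particular, both $\varphi$ and the given $\phi_i$ factor through $\psi_i$, so $\deg\psi_i$ divides $\gcd(3,\deg\phi_i) = 1$, forcing $\psi_i$ to be an isomorphism --- incompatible with $g(X) > 1$.

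The main subtlety, and the reason this lemma is needed beyond Lemma~\ref{lem:distinct elliptic factors of jacobian}, is the presence of repeated elliptic factors: when some $E_i$ appears with multiplicity $q_i \geq 2$, the subvariety of $J(X)$ isogenous to $E_i^{q_i}$ contains infinitely many distinct elliptic subgroups, and correspondingly infinitely many non-isomorphic elliptic subcovers of $X$ whose degrees cannot be individually controlled. The key insight that bypasses this obstacle is that Mordell-Weil rank is an isogeny invariant over $K$, so the rank-$0$ hypothesis on a single representative $E_i$ simultaneously kills \emph{all} maps to its entire $K$-isogeny class, with no need to enumerate the degrees of the individual subcovers.
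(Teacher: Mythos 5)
Your proof is correct and follows essentially the same route as the paper's: apply Theorem~\ref{thm:KV}, use the quadratic-points hypothesis and the genus bound to reduce to a degree-$3$ map to a positive-rank elliptic curve, invoke Theorem~\ref{thm: kani elliptic subgroups} to place the target in the isogeny class of some $E_i$, kill the repeated factors via isogeny-invariance of the rank, and dispatch the multiplicity-one factors by the coprimality argument of Lemma~\ref{lem:distinct elliptic factors of jacobian}. The only difference is that you spell out the degree-$1$ and degree-$2$ cases of Theorem~\ref{thm:KV} explicitly, which the paper leaves implicit.
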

\begin{proof}
    By Theorem~\ref{thm:KV}, $X/K$ has infinitely many degree-$3$ points if and only if it is a triple cover of $\mathbb{P}^1$ or an elliptic curve of positive rank over $K$. By hypothesis, $X/K$ does not admit a degree-$3$ map to $\mathbb{P}^1$. Thus, we must consider maps to elliptic curves.
    
    Suppose for the sake of contradiction that there exists a degree-$3$ map $f: X \to E$ for some elliptic curve $E$ of positive rank. As $f$ has degree $3$, we see that $f$ does not factor through an isogeny of degree greater than $1$. Thus, $f$ is an elliptic subcover, and by Theorem~\ref{thm: kani elliptic subgroups}, elliptic subcovers correspond to elliptic subgroups of $J(X)$ via pullback, so that $f^*E \leq J(X)$ is an elliptic subgroup. By the isogeny decomposition of $J(X)$, $f^*E$ must be isogenous to $E_i$ for some $i$. If $1 \leq i \leq m$, then $E_i$ has rank 0, and hence so does $E$, a contradiction. Thus, we can consider only the case where $m+1 \leq i \leq n$.  But the remaining elliptic factors are simple, so an argument as in Lemma~\ref{lem:distinct elliptic factors of jacobian} completes the proof.
\end{proof}

\section{Dynamical modular curves with finitely many cubic points}\label{sec:DMCs-finitely-many}

Our procedure for determining precisely which dynamical modular curves have infinitely many cubic points requires two main steps: First, we show that if $\calP$ has sufficiently many periodic points, then $X_1(\calP)$ has only finitely many cubic points. We do this by considering curves of the form $X_0(n)$, $X_1(n)$, and more generally $X_1(n_1,\ldots,n_m)$, as defined in Section~\ref{sec:dyn-poly}. The result will be that if $X_1(\calP)$ has infinitely many cubic points, then $\calP$ has no cycle of length greater than $4$ and has at most six periodic points.

Once we have restricted our attention to portraits with at most six periodic points, we fix an allowable cycle structure, start with the smallest portrait of that cycle structure, and, gradually increasing the number of vertices, we create larger and larger portraits until all such portraits with a given number of vertices yield curves with finitely many cubic points. Since a portrait morphism $\calP \hookrightarrow \calP'$ yields a morphism $X_1(\calP') \to X_1(\calP)$, once $X_1(\calP)$ has only finitely many cubic points, the same is true for any portrait containing $\calP$. In this way, we are able to determine all curves $X_1(\calP)$ with infinitely many cubic points.

\subsection{Periodic points}

\subsubsection{The quotient curves $X_0(n)$}

The main result of this subsection is the following:

\begin{prop}\label{prop:X0(n)}
    Let $n \ge 1$. Then the curve $X_0(n)$ has infinitely many cubic points if and only if $n \le 6$.
\end{prop}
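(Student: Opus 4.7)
The plan is to prove both directions of the equivalence, handling small $n$ case by case and invoking the general tools of Section~\ref{sec:useful-tools} for large $n$.

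For the sufficiency direction ($n \le 6 \Rightarrow$ infinitely many cubic points), I would argue case by case. When $n \in \{1,2,3\}$, the curve $X_0(n)$ has genus $0$ and admits a $\bbQ$-rational point, so $X_0(n) \cong \bbP^1_\bbQ$ and trivially carries infinitely many cubic points. When $n = 4$, the map $c : X_0(4) \to \bbP^1$ already has degree $D_0(4) = 3$, so $X_0(4)$ is trigonal; Hilbert irreducibility applied to fibers over $\bbQ$-rational values of $c$ (as in Lemma~\ref{lem:HIT}) gives infinitely many irreducible cubic fibers. For $n \in \{5, 6\}$, I would use Magma to decompose $J(X_0(n))$ into simple factors, identify an elliptic factor $E$ of positive rank over $\bbQ$, and then invoke Kani's correspondence (Theorem~\ref{thm: kani elliptic subgroups}) to produce an elliptic subcover $X_0(n) \to E$ of degree at most $3$; pulling back rational points on $E$ of infinite order then yields the required cubic points.

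For the necessity direction ($n \ge 7 \Rightarrow$ finitely many cubic points), I would first verify via the standard genus formula for $X_0(n)$ that $g(X_0(n)) > 4$ whenever $n \ge 7$. By Theorem~\ref{thm:KV}, it then suffices to rule out degree-at-most-$3$ morphisms from $X_0(n)$ to either $\bbP^1$ or an elliptic curve of positive rank. To exclude low-degree maps to $\bbP^1$, I would pair the hypothetical degree-$3$ map with $c$ (of degree $D_0(n) \ge 18$) in the Castelnuovo-Severi inequality (Theorem~\ref{thm:CS}): the numerical bound $g \le 2(D_0(n)-1)$ fails once $n$ is moderately large since $g(X_0(n))$ grows roughly like $n \cdot D_0(n)$, while the factor-through case is excluded by a gonality lower bound from point counts on a convenient reduction (Lemma~\ref{lem:gonality-point-count}). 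To exclude degree-$3$ morphisms to elliptic curves, I would reduce $X_0(n)$ modulo a small prime of good reduction, extract elliptic isogeny factors of $J(X_0(n))$ from the zeta function, and apply Lemma~\ref{lem:distinct elliptic factors of jacobian} or Lemma~\ref{lem:repeated elliptic factors of jacobian} to bound the degrees of all elliptic subcovers.

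The main obstacle will be handling all $n \ge 7$ uniformly. For each small $n$, Magma can perform the zeta-function and Jacobian computations outright, but the argument must eventually become theoretical: I would aim to isolate an explicit threshold $N_0$ beyond which the growth of $g(X_0(n))$ and $\dim J(X_0(n))$ forces both the Castelnuovo-Severi bound and the elliptic-subcover obstruction to hold automatically, then verify $7 \le n < N_0$ directly. The subtlest point is the repeated-elliptic-factor case of Lemma~\ref{lem:repeated elliptic factors of jacobian}, which requires knowing that every copy of a multiply-appearing elliptic factor has rank $0$ over $\bbQ$; if such a factor has positive rank, I would need a separate argument (perhaps via ramification analysis of $c$) to show that no degree-$3$ subcover onto the isogeny class actually lifts to $X_0(n)$.
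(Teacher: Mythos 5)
Your overall architecture (small $n$ by hand, large $n$ via Theorem~\ref{thm:KV} plus Castelnuovo--Severi, with an explicit threshold $N_0$) matches the paper, and your treatments of $n\le 4$ are fine. However, there is a genuine gap in the sufficiency direction for $n=5$ and $n=6$. Your plan there is to find a positive-rank elliptic isogeny factor of $J(X_0(n))$ and a degree-$\le 3$ elliptic subcover, but no such factor exists: $J(X_0(5))$ is a \emph{simple} abelian surface (this is visible in the paper's own analysis of $X_1(5)$, where the degree-$4$ irreducible factor of the zeta function mod $3$ corresponds to the full $2$-dimensional factor isogenous to $J(X_0(5))$), so there is no elliptic subcover at all; and for $X_0(6)$ there is likewise no positive-rank elliptic factor to exploit. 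Moreover, Kani's correspondence gives no control forcing a subcover to have degree $\le 3$. The arguments that actually work are maps to $\bbP^1$: $X_0(5)$ is a genus-$2$ curve with six rational points, so Lemma~\ref{lem:JKS_genus2} (Riemann--Roch) produces a degree-$3$ map to $\bbP^1$; and Stoll's plane model $\Psi_6(t,c)=0$ for $X_0(6)$ has degree $3$ in $c$, so $t:X_0(6)\to\bbP^1$ is itself a degree-$3$ map.

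In the necessity direction your asymptotic heuristic ($g_n\sim n\,D_0(n)$ versus the Castelnuovo--Severi bound $2D_0(n)+1$) is the right one, but two ingredients are missing. First, Castelnuovo--Severi only yields the numerical inequality \emph{or} the alternative that the degree-$3$ map and $c$ factor through a common map of degree $\ge 2$ (which here would force $c$ to factor through a degree-$3$ map); the paper disposes of this alternative uniformly in $n$ using Bousch's theorem that the Galois group of $c:X_0(n)\to\bbP^1$ is the full symmetric group $S_{D_0(n)}$, so $c$ admits no intermediate factorization. Your substitute --- gonality lower bounds from point counts on a reduction --- cannot be made uniform in $n$ and is only used in the paper for the isolated cases $n=7$ and $n=11$ where the numerical bound fails ($g_{16}=16=2\cdot 8$ for $n=7$). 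Second, the genus lower bound must be made effective: the paper does this via the branch-point count $B(n)$ of $c$ and Riemann--Hurwitz, proving $g_n>2D_0(n)+1$ for all $n\ge 12$ (explicitly for $12\le n\le 24$, asymptotically for $n\ge 25$), which is the content your sketch gestures at but does not supply.
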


In order to prove Proposition~\ref{prop:X0(n)}, we require the following lemma.

\begin{lem}\label{lem:CS_X0}
    Let $n \ge 1$, let $g_n$ denote the genus of $X_0(n)$, and assume that $g_n > 2D_0(n) + 1$. Then $X_0(n)$ has finitely many cubic points.
\end{lem}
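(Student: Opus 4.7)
The plan is a contradiction argument combining the Kadets--Vogt theorem (Theorem~\ref{thm:KV}) with the Castelnuovo--Severi inequality (Theorem~\ref{thm:CS}), applied to the projection $c : X_0(n) \to \bbP^1$.

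First I would observe that the hypothesis $g_n > 2 D_0(n) + 1$ already forces $g_n > 4$, so Theorem~\ref{thm:KV} is available. The only $n$ with $D_0(n) = 1$ is $n = 2$, for which $X_0(2) \cong \bbP^1$ has $g_n = 0$; hence any $n$ satisfying the hypothesis must have $D_0(n) \ge 2$, and therefore $g_n \ge 6$. Assuming for contradiction that $X_0(n)$ has infinitely many cubic points over $\bbQ$, Theorem~\ref{thm:KV} produces a $\bbQ$-morphism $\pi : X_0(n) \to Z$ of some degree $d \le 3$, where $Z$ is either $\bbP^1$ or an elliptic curve of positive rank over $\bbQ$; in particular, $g(Z) \le 1$.

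Next I would feed the pair $(\pi, c)$ into the Castelnuovo--Severi inequality, where $c$ has degree $D_0(n)$ and target genus $0$. Conclusion (a) of Theorem~\ref{thm:CS} yields
\[
g_n \;\le\; d \cdot g(Z) + (d-1)(D_0(n) - 1) \;\le\; 3 + 2(D_0(n) - 1) \;=\; 2 D_0(n) + 1,
\]
which directly contradicts the hypothesis.

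The principal obstacle is conclusion (b) of Theorem~\ref{thm:CS}, which would produce a common factor $\psi : X_0(n) \to Y$ of degree at least $2$ through which both $\pi$ and $c$ factor. A short divisibility argument ($\deg \psi$ divides $d \le 3$ and is at least $2$) forces $\deg \psi = d$, so $\psi$ equals $\pi$ up to isomorphism, $Y \cong Z$ has genus at most $1$, and $c$ factors as $c = f \circ \pi$ for some $f : Z \to \bbP^1$. To rule this out I would invoke the large Galois group of the dynatomic polynomial $\Phi_n(c,x)$ over $\bbQ(c)$: by the classical work of Bousch and Morton--Vivaldi, for the relevant range of $n$ this group is the wreath product $C_n \wr S_{D_0(n)}$, so the Galois group of the quotient $c : X_0(n) \to \bbP^1$ is the full symmetric group $S_{D_0(n)}$ acting doubly transitively---hence primitively---on the $D_0(n)$ sheets. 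Primitivity precludes any subfield strictly between $\bbQ(c)$ and $\bbQ(X_0(n))$, contradicting the existence of the intermediate field $f^* \bbQ(Z)$ demanded by case (b). (Alternatively, for the sub-case $Z \cong \bbP^1$ one could rule out the factorization by bounding the $\bbQ$-gonality of $X_0(n)$ from below via reduction modulo a prime of good reduction and Lemma~\ref{lem:gonality-point-count}.)
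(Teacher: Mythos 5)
Your proposal is essentially the paper's own proof: both combine Theorem~\ref{thm:KV} with the Castelnuovo--Severi inequality applied to the pair $(\pi, c)$, and both dispose of alternative (b) of Theorem~\ref{thm:CS} by invoking Bousch's theorem that the Galois group of $c : X_0(n) \to \bbP^1$ is the full symmetric group $S_{D_0(n)}$, so that $c$ admits no proper intermediate factorization; the inequality in case (a) is the same computation, merely rearranged. One small point needs patching: your contradiction in case (b) requires the intermediate field $\pi^*\bbQ(Z)$ to be \emph{strictly} between $\bbQ(c)$ and $\bbQ(X_0(n))$, i.e.\ $D_0(n) > d$, and your opening observation only gives $D_0(n) \ge 2$, which does not rule out $\deg f = 1$ (in which case $Z \cong \bbP^1$, $\pi$ is $c$ itself, and there is no contradiction). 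This is immediately repaired by noting that the hypothesis forces $n \ge 5$ (since $X_0(n) \cong \bbP^1$ for $n \le 4$), whence $D_0(n) \ge 6 > 3 \ge d$; the paper sidesteps the issue by first observing that the hypothesis fails outright for $n \le 11$.
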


\begin{proof}
    The inequality $g_n > 2D_0(n) + 1$ fails for $1 \le n \le 11$, so we must have $n \ge 12$. In this case, $D_0(n) \ge 2^{n-1}/n$ by \eqref{eq:D1_bounds}, hence the condition that $g_n > 2D_0(n) + 1$ implies that $g_n > 4$. By Theorem~\ref{thm:KV}, it suffices to show that if $n\ge 12$, then $X_0(n)$ does not admit a map of degree at most $3$ to a curve of genus at most $1$.

    Thus, let $\varphi : X_0(n) \to C$ be a degree-$d$ morphism to a curve of genus $g_C \le 1$.
    Consider the map $c : X_0(n) \to \bbP^1$, which has degree $D_0(n)$. It follows from \cite[\S III, Th\'eor\`eme 3]{bousch:1992} that the Galois group of (the Galois closure of) the morphism $c$ is the full symmetric group $S_{D_0(n)}$, hence the map $c$ does not factor through any intermediate maps. Thus, Castelnuovo-Severi implies that either $d \ge D_0(n)$ (in the case that $\varphi$ factors through $c$) or
        \[
        g_n \le dg_C + (D_0(n) - 1)(d - 1).
        \]
    In the former case, we are done, as $D_0(n) > 3$. In the latter, our assumption that $g_n > 2D_0(n) + 1$ implies that
        \[
            d \ge \frac{g_n + D_0(n) - 1}{g_C + D_0(n) - 1} \ge \frac{g_n - 1}{D_0(n)} + 1 > 3. \qedhere
        \]
\end{proof}

\begin{proof}[Proof of Proposition~\ref{prop:X0(n)}]
    The curves $X_1(n)$ with $n = 1,2,3$ are known to be isomorphic over $\bbQ$ to $\bbP^1$ (see \cite{walde/russo:1994}), so also $X_0(n) \cong \bbP^1$ over $\bbQ$ for those values of $n$. Morton \cite{morton:1998} determined that $X_0(4) \cong_\bbQ \bbP^1$ as well.

    Flynn, Poonen, and Schaefer considered the genus-$2$ curve $X_0(5)$ in \cite{flynn/poonen/schaefer:1997}, where they showed that it has precisely six rational points. Thus, by Lemma~\ref{lem:JKS_genus2}, $X_0(5)$ has infinitely many cubic points.

    In \cite[p. 369]{stoll:2008}, Stoll gives a plane model $\Psi_6(t,c) = 0$ for the genus-$4$ curve, and the polynomial $\Psi_6$ has degree $3$ in $c$. In particular, the map $t : X_0(6)\to\bbP^1$ has degree $3$, so $X_0(6)$ also has infinitely many cubic points.

    Following the strategy from \cite{stoll:2008}, we can explicitly construct models for $X_0(n)$ with $7 \le n \le 11$. We summarize the results as follows; note that the genus of $X_0(n)$ can be computed using an explicit formula provided by \cite[Theorem C]{morton:1996}. We also note that the genus of each of these curves is greater than $4$, and by \cite[Proposition 4.5]{doyle/krumm:2024} each curve has finitely many quadratic points, so it suffices to show that none of these curves admit degree-$3$ maps to a curve of genus at most $1$.
    \begin{itemize}
        \item The curve $X_0(7)$ has genus $16$ and a map of degree $9$ to $\bbP^1$; Castelnuovo-Severi is not enough. Instead, we reduce at $p=3$. Our model is not smooth, but as the geometric genus of the resulting curve is also $16$, Lemma~\ref{lem:liu_reduction} says that $X_0(7)$ has good reduction at $3$. By Lemmas~\ref{lem:gonality-point-count} and~\ref{lem:distinct elliptic factors of jacobian}, it suffices to show that the $\bbF_3$-gonality of $X_0(7)$ is greater than $3$ and that the Jacobian $J(X_0(7))$ has no elliptic isogeny factors.
        
        If $\varphi : X_0(7) \to \bbP^1$ were a map of degree at most $3$ defined over $\bbF_3$, then the pole divisor of $\varphi$ would be defined over $\bbF_3$ and have degree at most $3$. But a Riemann-Roch computation in Magma shows that for any $\bbF_3$-rational degree-$3$ effective divisor $D$ on $X_0(7)$, the Riemann-Roch space of $D$ consists only of constant functions, hence there are no maps of degree at most $3$ to $\bbP^1$ defined over $\bbF_3$. (A slightly longer calculation shows that the $\bbF_3$-gonality of $X_0(7)$ is $6$, so the $\bbQ$-gonality is at least $6$ by Lemma~\ref{lem:gonality-point-count}.)
        
        Finally, the Jacobian over $\mathbb{F}_3$ is simple, as the numerator of the zeta function is irreducible over $\mathbb{Q}$. Therefore, $X_0(7)$ does not admit any maps to elliptic curves.
        \item The curves $X_0(8)$, $X_0(9)$, and $X_0(10)$ have genera $32$, $79$, and $162$, and admit maps of degree $11$, $28$, and $37$, respectively, to $\mathbb{P}^1$. Castelnuovo-Severi (Theorem~\ref{thm:CS}) is therefore sufficient to rule out degree-$3$ maps to curves of genus at most $1$.
        \item By \cite[Theorem D]{doyle/etal:2019} and \cite[Theorem 10.1]{doyle/etal:2019}, both $X_1(11)$ and $X_0(11)$ have good reduction at $3$, and by \cite[Remark 3.12]{doyle/etal:2019}, the curve $X_0(11)_{\bbF_3}$ is the quotient of $X_1(11)_{\bbF_3}$ by the automorphism $(x,c) \mapsto (x^2 + c,c)$; in other words, the quotient of the reduction is the reduction of the quotient.
        By \cite[Proposition 10]{morton:1996}, all points on $X_1(11)_{\bbF_3}$ lying above $c = \infty$ are $\bbF_3$-rational, hence the same is true for $X_0(11)_{\bbF_3}$. The proof of \cite[Theorem 13(a)]{morton:1996} shows that there are precisely $93$ points on $X_0(11)_{\bbF_3}$ lying above $c = \infty$, hence at least $93$ $\bbF_3$-rational points on $X_0(11)$. By Lemma~\ref{lem:gonality-point-count}, the gonality (over $\bbQ$) of $X_0(11)$ is at least $\lceil \frac{93}{4}\rceil = 24$, hence $X_0(11)$ does not admit any maps of degree at most $3$ defined over $\bbF_3$ to either $\bbP^1$ or an elliptic curve.
    \end{itemize}

Now fix $n \ge 12$; we claim that the genus of $X_0(n)$ satisfies $g_n > 2D_0(n) + 1$, so that Lemma~\ref{lem:CS_X0} will yield the desired conclusion. For $12 \le n \le 24$, this can be verified by explicit calculations, so assume $n \ge 25$.

Consider the morphism $c : X_0(n) \to \bbP^1$, which has degree $D_0(n)$. Every affine ramification point of the map $c$ has ramification index $2$, and the number of affine branch points is precisely
    \[
        B(n) := \frac12\cdot \left(D_1(n) - \sum_{\substack{k\mid n\\k < n}} D_1(k)\phi\left(\frac nk\right)\right).
    \]
(See the proof of \cite[Theorem 13]{morton:1996}.) In particular, $B(n)$ is a lower bound for the degree of the ramification divisor of $c : X_0(n) \to \bbP^1$, and this together with the Riemann-Hurwitz formula yields the inequality
    \begin{equation}\label{eq:genus_ineq}
        g_n \ge 1 + \frac{B(n)}2 - D_0(n).
    \end{equation}
Thus, it suffices to show that for $n \ge 25$ we have $B(n) > 6D_0(n)$. By \eqref{eq:D1_bounds}, it suffices to show that
    \begin{equation}\label{eq:B(n)bound}
        B(n) \ge \frac{6 \cdot 2^n}n.
    \end{equation}
For $n \ge 25$, we have
    \begin{align*}
        B(n)
            &= \frac12\cdot \left(D_1(n) - \sum_{\substack{k\mid n\\k < n}} D_1(k)\phi\left(\frac nk\right)\right)\\
            &\ge \frac12\cdot\left(D_1(n) - \sum_{k=1}^{\lfloor n/2\rfloor} D_1(k)\cdot n\right)\\
            &\ge \frac12\cdot\left(2^{n-1} - n\sum_{k=1}^{\lfloor n/2\rfloor} 2^k\right) \qquad\text{(by \eqref{eq:D1_bounds})}\\
            &= 2^{n-2} - n\left(2^{\lfloor n/2\rfloor} - 1\right)\\
            &\ge 2^{n-2} - n\cdot 2^{n/2} + n.
    \end{align*}
Thus, to show that \eqref{eq:B(n)bound} is satisfied, it suffices to show that
    \[ 
        2^{n-2} - n\cdot 2^{n/2} + n \ge \frac{6 \cdot 2^n}n.
    \]
This is equivalent to the statement that
    \[
    2^n \cdot \left(\frac14 - \frac6n - \frac n{2^{n/2}}\right) + n \ge 0.
    \]
As the quantity in parentheses is an increasing function of $n$ for $n \ge 1$ and is positive when $n = 25$, we are done.
\end{proof}

\subsubsection{The curves $X_1(n)$}

\begin{prop}\label{prop:X1(n)}
    Let $n \ge 1$. Then the curve $X_1(n)$ has infinitely many cubic points if and only if $n \le 4$.
\end{prop}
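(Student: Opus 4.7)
The plan is to mirror the proof of Proposition~\ref{prop:X0(n)}: dispose of the small cases individually, then reduce the large-$n$ case to the $X_0(n)$ result via the natural quotient map. For $n = 1, 2, 3$, I would invoke Walde--Russo \cite{walde/russo:1994}, who show $X_1(n) \cong_\bbQ \bbP^1$ for these $n$, so cubic points are plainly infinite. For $n = 4$, I would cite Morton's analysis of period-$4$ points to identify $X_1(4)$ as a genus-$2$ curve, exhibit at least three $\bbQ$-rational points on it (for instance the ``degenerate'' points where $\Phi_4(c,x)$ vanishes but $x$ has period properly dividing $4$), and appeal to Lemma~\ref{lem:JKS_genus2}.

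For $n \ge 7$, I would exploit the quotient map $\pi : X_1(n) \to X_0(n)$, whose degree is $n \ge 7 > 3$. Proposition~\ref{prop:X0(n)} asserts that $X_0(n)$ has only finitely many cubic points, and since $g(X_0(n)) \ge 2$ throughout this range, Faltings also gives finitely many rational points. Any cubic point of $X_1(n)$ projects to a point of $X_0(n)$ of degree $1$ or $3$; there are only finitely many such, and each has at most $n$ preimages, so $X_1(n)$ has only finitely many cubic points.

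The real work is in the range $n = 5, 6$, because here $X_0(n)$ itself admits infinitely many cubic points and the pullback argument breaks down. Both $X_1(5)$ and $X_1(6)$ have genus greater than $4$, so Theorem~\ref{thm:KV} reduces the task to ruling out degree-$3$ morphisms to $\bbP^1$ and to elliptic curves of positive $\bbQ$-rank. For maps to $\bbP^1$, I would reduce modulo a small prime of good reduction and use Morton's observation \cite[Proposition 10]{morton:1996} that all points above $c = \infty$ are $\bbF_p$-rational to produce a large $\bbF_p$-point count; Lemma~\ref{lem:gonality-point-count} should then deliver a gonality bound strictly greater than $3$. For elliptic subcovers, I would compute the isogeny decomposition of $\Jac(X_1(n))$ in Magma and apply Lemma~\ref{lem:distinct elliptic factors of jacobian} or Lemma~\ref{lem:repeated elliptic factors of jacobian}, depending on whether the elliptic factors appear with multiplicity, using Mordell--Weil ranks of the relevant elliptic curves together with low-degree maps coming from subportraits or from quotients by subgroups of the automorphism group to verify the hypotheses.

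The hard part will be precisely this last step for $n = 5, 6$: Castelnuovo--Severi applied to $c : X_1(n) \to \bbP^1$ is too weak to rule out a degree-$3$ map to a curve of small genus (one computes bounds comparable to $g_n^1 \le 2 D_1(n) + 1$, which the actual genera do not comfortably exceed in this range), so one must genuinely compute the Jacobian decomposition and rank data for each elliptic factor. Everything else is bookkeeping around Proposition~\ref{prop:X0(n)} and the standard low-genus tools.
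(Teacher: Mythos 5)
Your treatment of $n\le 4$ and of $n\ge 7$ matches the paper's: the same citations for $n\le 3$, the same genus-$2$-plus-rational-points argument via Lemma~\ref{lem:JKS_genus2} for $n=4$, and the same reduction of $n\ge 7$ to Proposition~\ref{prop:X0(n)} through the quotient $X_1(n)\to X_0(n)$ (your explicit appeal to Faltings for the rational points of $X_0(n)$ is a detail the paper leaves implicit). Where you diverge is exactly the cases you flag as hard, $n=5,6$, and here the paper has a cleaner device that your plan misses: instead of applying Castelnuovo--Severi to the degree-$D_1(n)$ map $c$ (which, as you correctly observe, is useless), it applies it to auxiliary maps of \emph{intermediate} degree coprime to $3$, namely $(c,x)\mapsto\Phi_2(c,x)$ of degree $7$ on $X_1(5)$ and $(c,x)\mapsto x^2-x+c$ of degree $13$ on $X_1(6)$. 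Since $\gcd(3,7)=\gcd(3,13)=1$ rules out a common factorization, Theorem~\ref{thm:CS} gives $14\le 3g_C+12$ for $X_1(5)$ (killing degree-$3$ maps to $\bbP^1$) and $34\le 3g_C+24$ for $X_1(6)$ (killing degree-$3$ maps to \emph{any} curve of genus $\le 1$), so $X_1(6)$ is finished with no Jacobian computation at all. For $X_1(5)$ the paper then does exactly what you propose for elliptic subcovers: good reduction at $3$ and an L-polynomial with irreducible factors of degrees $4$ and $24$, hence no elliptic isogeny factor. Your substitute for the $\bbP^1$ step---$\bbF_p$-point counts above $c=\infty$ feeding Lemma~\ref{lem:gonality-point-count}---is plausible for $n=5$ (there are enough cusps that the bound should clear $3$), but for $n=6$ your route requires the isogeny decomposition of a genus-$34$ Jacobian and verification of the hypotheses of Lemma~\ref{lem:distinct elliptic factors of jacobian} or~\ref{lem:repeated elliptic factors of jacobian} (maps of degree prime to $3$ onto each elliptic factor, or rank-$0$ data), none of which you can guarantee in advance; this is the one place your plan is at real risk of stalling, and the degree-$13$ map is the idea that makes it unnecessary.
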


\begin{proof}
As mentioned in the proof of Theorem~\ref{prop:X0(n)}, the curves $X_1(n)$ for $n=1,2,3$ are known to be isomorphic (over $\bbQ$) to $\bbP^1$, so they have infinitely many cubic points.
Morton showed in \cite{morton:1998} that $X_1(4)$ has genus $2$ and six rational points, hence $X_1(4)$ has infinitely many cubic points by Lemma~\ref{lem:JKS_genus2}.

Theorem~\ref{prop:X0(n)} says that $X_0(n)$ has finitely many cubic points for all $n \ge 7$, hence the same is true for $X_1(n)$. It therefore remains to show that $X_1(5)$ and $X_1(6)$ have finitely many cubic points.

The curve $X_1(5)$ has genus $14$ and admits a degree-$7$ map to $\bbP^1$: If we use the affine model for $X_1(5)$ given by the vanishing of the $5$th dynatomic polynomial $\Phi_5(c,x)$, the degree-$7$ map is
    \[
    (c,x) \mapsto \Phi_2(c,x) = x^2 + x + c + 1.
    \]
By Castelnuovo-Severi, $X_1(5)$ has no cubic maps to $\bbP^1$. The curve $X_0(5)$ has good reduction at $p = 3$ by \cite{flynn/poonen/schaefer:1997}, hence the same is true for $X_1(5)$ by \cite[Theorem D]{doyle/etal:2019}. The numerator of the zeta function of $X_1(5)_{\bbF_3}$ has two irreducible factors over $\mathbb{Q}$---one of degree $4$, which corresponds to the $2$-dimensional factor isogenous to $J(X_0(5))$, and one of degree $24$. Thus, $X_1(5)$ does not admit any maps to elliptic curves over $\mathbb{Q}$, as $J(X_1(5))$ does not contain an elliptic curve over $\mathbb{Q}$.

We do something similar for $X_1(6)$: Using the affine model given by $\Phi_6(c,x) = 0$, the map
    \begin{align*}
        X_1(6) &\longto \bbP^1\\
        (c,x) &\longmapsto x^2 - x + c
    \end{align*}
has degree $13$; this can be verified in Magma. Since $X_1(6)$ has genus $34$, Castelnuovo-Severi rules out any maps of degree $3$ to a curve of genus at most $1$. Therefore, $X_1(6)$ has only finitely many cubic points.
\end{proof}

\subsubsection{Curves of the form $X_1(n_1,n_2)$ and $X_1(n_1,n_2,n_3)$} 

From this point forward, there are essentially four different types of arguments we use to show that various dynamical modular curves have only finitely many cubic points, and we list the curves for which each argument is used in Tables~\ref{table:no degree 3 maps to elliptic curves}, \ref{table:no elliptic factors}, \ref{table:no repeated elliptic factors}, and~\ref{table:repeated elliptic factors}. For example, Table~\ref{table:no degree 3 maps to elliptic curves} is a list of dynamical modular curves of large genus which admit low-degree maps to low-genus curves, and hence the Castelnuovo-Severi inequality Theorem~\ref{thm:CS} rules out the existence of degree-$3$ maps to $\bbP^1$ or an elliptic curve. More detailed explanations of Tables~\ref{table:no elliptic factors}, \ref{table:no repeated elliptic factors}, and~\ref{table:repeated elliptic factors} are given below.

\begin{prop}\label{prop:X1(m,4)}
    Let $1 \le n \le 4$. Then $X_1(4,n)$ has finitely many cubic points.
\end{prop}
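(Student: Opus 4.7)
The plan is to treat each curve $X_1(4,n)$ for $n \in \{1,2,3,4\}$ individually, using the affine model obtained from the dynatomic equations $\Phi_4(c,x) = 0$ and $\Phi_n(c,y) = 0$, supplemented (when $n = 4$) by the Zariski-open condition that $y$ does not lie in the forward orbit of $x$ under $f_c$. A Magma computation desingularizes each model and returns its genus; since the projections $c : X_1(4,n) \to \bbP^1$ have degrees $D_1(4) \cdot D_1(n) = 24,\ 24,\ 72,\ 96$ respectively (the last after excluding the same-orbit locus), I expect each genus to be comfortably greater than $4$, so that Theorem~\ref{thm:KV} reduces the problem to ruling out degree-$3$ morphisms from $X_1(4,n)$ to $\bbP^1$ and to elliptic curves of positive Mordell--Weil rank over $\bbQ$.

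To rule out degree-$3$ morphisms to $\bbP^1$, I would apply the Castelnuovo--Severi inequality (Theorem~\ref{thm:CS}) together with the forgetful morphism $\pi : X_1(4,n) \to X_1(4)$ produced by Proposition~\ref{prop:functoriality}. Since $X_1(4)$ has genus $2$ and $\pi$ has degree $D_1(n)$ (respectively $D_1(4) - 4 = 8$ when $n = 4$), any hypothetical degree-$3$ map $\varphi$, paired with $\pi$, satisfies either the explicit genus bound $g(X_1(4,n)) \le 2\deg\pi + 2(\deg\pi - 1)$ or forces $\pi$ and $\varphi$ to factor through a common morphism of degree $\ge 2$; the latter demands that the common degree divide both $3$ and $\deg\pi$, which is impossible when $\gcd(\deg\pi, 3) = 1$, and in the one case $n = 3$ where $3 \mid \deg\pi$ the possible intermediate factorizations can be eliminated by replacing $\pi$ with the parameter map $c$ and invoking the full-symmetric-group Galois result of \cite[\S III]{bousch:1992}. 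For morphisms to elliptic curves, I would reduce each $X_1(4,n)$ modulo a prime $p$ of good reduction (verified via Lemma~\ref{lem:liu_reduction}) and extract the isogeny decomposition of $J(X_1(4,n))$ from the numerator of the zeta function over $\bbF_p$. For each simple elliptic isogeny factor $E$ I would construct a morphism $X_1(4,n) \to F$ with $F$ isogenous to $E$ and $\deg$ coprime to $3$, obtained either as a quotient by a subgroup of $\Aut(\calP)$ via Proposition~\ref{prop:auts} or as a forgetful map to a sub-portrait via Proposition~\ref{prop:functoriality}, then invoke Lemma~\ref{lem:distinct elliptic factors of jacobian}; for factors of higher multiplicity I would verify rank $0$ over $\bbQ$ from their Cremona labels and invoke Lemma~\ref{lem:repeated elliptic factors of jacobian}. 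The outcomes would be recorded as new entries in Tables~\ref{table:no degree 3 maps to elliptic curves}, \ref{table:no elliptic factors}, \ref{table:no repeated elliptic factors}, and~\ref{table:repeated elliptic factors}.

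The main obstacle I anticipate is $X_1(4,4)$: it has the largest genus and the richest automorphism group---in particular, the involution $\tau$ swapping the two marked $4$-cycles makes it very likely that $J(X_1(4,4))$ contains repeated elliptic isogeny factors, so the less sharp Lemma~\ref{lem:repeated elliptic factors of jacobian} must be used and one needs to identify explicit rank-$0$ elliptic quotients coming from sub-portraits or from proper subgroups of $\Aut(\calP)$. A secondary technical point is that, because the genus of $X_1(4)$ is already $2$, the forgetful map $\pi$ contributes a nontrivial Castelnuovo--Severi correction, and for $n \in \{3,4\}$ (where $\deg\pi$ is large) one must check by hand that no intermediate factor of $\pi$ admits an additional degree-$3$ map; this is the one place where explicit Galois or Riemann--Roch calculations on the reduced model may be required.
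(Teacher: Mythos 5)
Your treatment of $X_1(4,1)$ and $X_1(4,2)$ matches the paper's: the degree-$2$ forgetful map to the genus-$2$ curve $X_1(4)\cong X_1({\rm 8(4)})$ feeds Castelnuovo--Severi to exclude low-degree maps to $\bbP^1$, and the Jacobians (computed modulo $7$) each have a single elliptic isogeny factor (43A1, resp.\ 27A3) admitting a degree-$4$ quotient map, so Lemma~\ref{lem:distinct elliptic factors of jacobian} excludes degree-$3$ maps to elliptic curves. Your plan for $n=3,4$ has a genuine problem, however. You propose to determine the isogeny decomposition of $J(X_1(4,3))$ and $J(X_1(4,4))$ from zeta functions over $\bbF_p$; these curves have genus $49$ and at least $65$, so that computation is far out of reach, and your anticipated analysis of repeated elliptic factors of $X_1(4,4)$ via Lemma~\ref{lem:repeated elliptic factors of jacobian} (which requires knowing the \emph{full} decomposition of the Jacobian) cannot be carried out. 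It is also unnecessary: the Castelnuovo--Severi pairing you already use for $\bbP^1$ excludes degree-$3$ maps to \emph{elliptic} targets as well, since with $d_1=\deg\pi\in\{6,8\}$, $g_1=2$, $d_2=3$, $g_2=1$ the bound reads $g\le 4\deg\pi+1$, i.e.\ $25$ and $33$, well below $49$ and $65$. This is exactly what the paper does (Table~\ref{table:no degree 3 maps to elliptic curves}): for $n=3,4$ no Jacobian information is needed at all.

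A second, smaller flaw: your patch for the common-factorization branch of Castelnuovo--Severi when $n=3$ does not work. Replacing $\pi$ by the degree-$72$ map $c$ makes the inequality $49\le 2\cdot 71$ hold, so that pairing yields no contradiction, and Bousch's symmetric-group theorem concerns $c:X_0(n)\to\bbP^1$, not the fibre product $X_1(4,3)$, whose monodromy lies in a proper subgroup of the full symmetric group. The repair is elementary and needs no Galois theory: if $\varphi$ (degree $3$, target of genus $\le 1$) and $\pi$ (degree $6$, target $X_1(4)$ of genus $2$) factor through a common $\psi:X\to Y$ with $\deg\psi\ge 2$, then $\deg\psi\mid 3$ forces $\deg\psi=3$ and $Y$ isomorphic to the genus-$\le 1$ target, while $\pi$ would exhibit $Y$ as a degree-$2$ cover of the genus-$2$ curve $X_1(4)$; this is impossible because genus cannot drop under a nonconstant morphism. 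The same remark disposes of degree-$2$ maps, which you must also exclude since Theorem~\ref{thm:KV} concerns maps of degree \emph{at most} $3$; alternatively, cite the finiteness of quadratic points on these curves from \cite{doyle/krumm:2024}, as the paper does.
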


\begin{proof}
Each of these curves has genus greater than $4$, so Theorem~\ref{thm:KV} applies. The curves $X_1(4,1)$ and $X_1(4,2)$ appear in Table~\ref{table:no repeated elliptic factors}, while $X_1(4,3)$, and $X_1(4,4)$ appear in Table~\ref{table:no degree 3 maps to elliptic curves}.
\end{proof}

\begin{table}
\centering
\begin{tabular}{|l|l|l|l|l|}
 \hline
 Curve $X_1(\cdot)$ & Genus & Degree-$d$ & Genus \\
  & & subcover & of image \\
 \hline \hline
 3, 3, 1 & 16 & $2$: 3, 3 & 4 \\
 \hline
 3, 3, 2 & 16 & $2$: 3, 3 & 4 \\
 \hline
 4, 3 & $49$ & $6$: 8(4) & $2$\\
 \hline
 4, 4 & $\ge 65$ & $8$: 8(4) & $2$\\
 \hline\hline
 14(3,3) & $16$ & $2$: 12(3,3) & $4$\\
 \hline
\end{tabular}
\caption{Curves $X_1(\cdot)$ for which Castelnuovo-Severi directly implies no degree-$3$ maps to curves of genus $g\leq 1$. An entry of the form ``$d: \square$" in the third column indicates a degree-$d$ map to $X_1(\square)$.}
\label{table:no degree 3 maps to elliptic curves}
\end{table}

\begin{rem}
    In Table~\ref{table:no degree 3 maps to elliptic curves}, we record the fact that $X_1(4,4)$ has genus at least $65$. This was verified computationally in Magma by showing that the genus of the base change over $\bbF_7$, where the curve has good reduction, has genus $65$. One can, in fact, show that the genus is equal to $65$ by describing the ramification locus of the map $c : X_1(4,4) \to \bbP^1$ and applying Riemann-Hurwitz, but doing so would take us a bit too far afield.
\end{rem}

\begin{prop}\label{prop:X1(m,n,3)}
    Let $(m,n) \in \{(2,1),\ (3,1),\ (3,2)\}$. Then $X_1(3,m,n)$ has finitely many cubic points.
\end{prop}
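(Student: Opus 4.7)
The plan is to treat the three curves $X_1(3,2,1)$, $X_1(3,3,1)$, $X_1(3,3,2)$ by the same two-stage strategy already used in the preceding propositions. For each curve I would first write down the affine model given by
\[
\Phi_3(c,x) = \Phi_m(c,y) = \Phi_n(c,z) = 0
\]
(with the Zariski-open conditions ensuring distinct orbits), verify in Magma that its geometric genus exceeds $4$, and then invoke Theorem~\ref{thm:KV} to reduce the task to ruling out a degree-$3$ morphism to $\mathbb{P}^1$ or to an elliptic curve of positive $\mathbb{Q}$-rank.

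For $X_1(3,3,1)$ and $X_1(3,3,2)$, the entries in Table~\ref{table:no degree 3 maps to elliptic curves} record that each admits a degree-$2$ morphism to $X_1(3,3)$ (of genus $4$), coming via Proposition~\ref{prop:functoriality} from the portrait morphism that forgets the fixed point or the $2$-cycle. The Castelnuovo-Severi inequality with $(d_1,g_1)=(2,4)$ and $(d_2,g_2)=(3,\le 1)$ gives $16 \le 2\cdot 4 + 3\cdot 1 + 1\cdot 2 = 13$, which fails; and since $\gcd(2,3)=1$, the two maps cannot factor through a common cover of degree at least $2$. Hence neither curve admits a degree-$3$ map to any curve of genus $\le 1$.

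The case $X_1(3,2,1)$ is the one treated in the worked example immediately preceding this proposition: the rotation $\sigma$ of the $3$-cycle gives a quotient birational to the elliptic curve \textrm{32A2}, which has $\mathbb{Q}$-rank $0$. To finish, I would compute the full isogeny decomposition of $J(X_1(3,2,1))$ by factoring the numerator of the zeta function modulo a prime of good reduction (e.g.\ $p=5$ or $p=7$), identify each elliptic isogeny factor with a specific curve in Cremona's tables, and then apply Lemma~\ref{lem:distinct elliptic factors of jacobian} or Lemma~\ref{lem:repeated elliptic factors of jacobian}: for each elliptic factor either (i) the factor has rank~$0$ over $\mathbb{Q}$, or (ii) one produces an explicit morphism from $X_1(3,2,1)$ to an isogenous elliptic curve of degree coprime to $3$, obtained either from a subportrait morphism via Proposition~\ref{prop:functoriality} or from a further automorphism quotient via Proposition~\ref{prop:auts}. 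One must also rule out a degree-$3$ map to $\mathbb{P}^1$, which will again follow from Castelnuovo-Severi once $X_1(3,2,1)$ is known to admit a low-degree map to a lower-genus quotient (for instance $X_1(3,2)$ or the $\sigma$-quotient).

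The main obstacle is the elliptic-factor analysis for $X_1(3,2,1)$: identifying every elliptic isogeny factor of $J(X_1(3,2,1))$ and, for each one that has positive rank, exhibiting an explicit rational map from $X_1(3,2,1)$ to an isogenous elliptic curve of degree coprime to $3$. The other two curves reduce immediately to Castelnuovo-Severi once the map to $X_1(3,3)$ is in hand.
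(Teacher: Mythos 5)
Your proposal follows essentially the same route as the paper: the two genus-$16$ curves $X_1(3,3,1)$ and $X_1(3,3,2)$ are handled by Castelnuovo--Severi against the degree-$2$ cover of the genus-$4$ curve $X_1(3,3)$ exactly as in Table~\ref{table:no degree 3 maps to elliptic curves}, and $X_1(3,2,1)$ by Castelnuovo--Severi against the degree-$2$ cover of $X_1(3,2)$ plus an analysis of the elliptic isogeny factors of the Jacobian. The computation you defer is precisely what the paper records in Table~\ref{table:no repeated elliptic factors}: reducing modulo $p=7$ shows that the unique elliptic isogeny factor of $J(X_1(3,2,1))$ is 32A2, which has rank $0$ over $\bbQ$, so your case (i) applies and no further coprime-degree maps are needed.
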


\begin{proof}
    The curve $X_1(3,2,1)$ appears in Table~\ref{table:no repeated elliptic factors}, while $X_1(3,3,1)$ and $X_1(3,3,2)$ appear in Table~\ref{table:no degree 3 maps to elliptic curves}.
\end{proof}

\begin{prop}\label{prop:cycle_structures}
    Let $\calP$ be a generic quadratic portrait for which $X_1(\calP)$ has infinitely many cubic points. Then the cycle structure of $\calP$ is one of the following:
        \[
            (1,1),\ (2),\ (3),\ (4),\ (2,1,1),\ (3,1,1),\ (3,2),\ \text{or}\ (3,3).
        \]
\end{prop}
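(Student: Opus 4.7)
The plan is to leverage two general tools. By Proposition~\ref{prop:functoriality} and its corollary, for any generic quadratic portrait $\calP$ with cycle structure $(n_1,\ldots,n_m)$, and more generally for any sub-portrait $\calQ \hookrightarrow \calP$, there is a morphism $X_1(\calP) \to X_1(\calQ)$ defined over $\bbQ$. And any non-constant morphism $f : X \to Y$ of smooth projective curves has finite fibers and satisfies $[K(f(P)):K] \mid [K(P):K]$ for every $P \in X(\bar K)$, so if $X$ has infinitely many cubic points then $Y$ has infinitely many points of degree $1$ or $3$; equivalently, if $Y$ has finitely many rational \emph{and} finitely many cubic points, then so does $X$. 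In every application below, the target curve has genus at least $2$, so Faltings supplies finiteness of rational points automatically.

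I argue by case analysis on the cycle structure. If $\calP$ contains a cycle of length $n \ge 5$, then the morphism $X_1(\calP) \to X_1(n)$, composed with $X_1(n) \to X_0(n)$ when $n \ge 7$, lands in a curve of genus at least $2$ shown by Proposition~\ref{prop:X1(n)} or Proposition~\ref{prop:X0(n)} to have finitely many cubic points, so $X_1(\calP)$ does as well. Hence only cycle lengths in $\{1,2,3,4\}$ can appear in $\calP$.

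If $\calP$ contains a $4$-cycle along with at least one other periodic cycle, of some length $k \in \{1,2,3,4\}$, then the induced morphism $X_1(\calP) \to X_1(4,k)$ combined with Proposition~\ref{prop:X1(m,4)} forces only finitely many cubic points on $X_1(\calP)$; thus $(4)$ is the only allowed cycle structure containing a $4$-cycle. Similarly, any cycle structure containing a $3$-cycle but not among $(3), (3,1,1), (3,2), (3,3)$ contains one of $(3,2,1), (3,3,1), (3,3,2)$ as a sub-structure, and Proposition~\ref{prop:X1(m,n,3)} completes the reduction.

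The remaining cycle structures use only $1$- and $2$-cycles; the bounds $D_0(1)=2$ and $D_0(2)=1$ from Definition~\ref{defn:gen-quad}(b), combined with the constraint in Definition~\ref{defn:gen-quad}(c) that a portrait with a fixed point has exactly two fixed points, restrict the non-empty possibilities to $(2), (1,1),$ and $(2,1,1)$, all of which already appear in the allowed list. The main obstacle is simply the bookkeeping: one must verify that every disallowed cycle structure admits a sub-structure handled by one of Propositions~\ref{prop:X0(n)}--\ref{prop:X1(m,n,3)}. Once the cases are sorted by largest cycle length this verification is routine.
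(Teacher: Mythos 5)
Your proposal is correct and follows essentially the same route as the paper: it combines Propositions~\ref{prop:X1(n)}, \ref{prop:X1(m,4)}, and~\ref{prop:X1(m,n,3)} with the functorial morphisms of Proposition~\ref{prop:functoriality} and the bounds $D_0(1)=2$, $D_0(2)=1$ together with Definition~\ref{defn:gen-quad}(c). The only difference is that you make explicit the (correct) observation that one also needs finiteness of \emph{rational} points on the target curves, which the paper leaves implicit since all targets have genus at least $2$.
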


\begin{proof}
Suppose $X_1(\calP)$ has infinitely many cubic points. By Proposition~\ref{prop:X1(n)}, $\calP$ should have no points of period larger than $4$. Note that $\calP$ has at most $D_0(2) = 1$ cycle of length $2$, at most $D_0(3) = 2$ cycles of length $3$, and at most $D_0(4) = 3$ cycles of length $4$; moreover, if $\calP$ has a fixed point, it must have exactly two.

Proposition~\ref{prop:X1(m,4)} says that if $\calP$ has a $4$-cycle, then that is the only cycle appearing in $\calP$, hence the cycle structure must be $(4)$. Moreover, it follows from Proposition~\ref{prop:X1(m,n,3)} that if $\calP$ has a $3$-cycle, then the cycle structure of $\calP$ must be one of $(3,1,1)$, $(3,2)$, or $(3,3)$.
\end{proof}

\subsection{Curves associated to more general portraits}\label{sec:finitely_many}

In this section we consider dynamical modular curves associated to portraits with cycle structure listed in Proposition~\ref{prop:cycle_structures}, and among all such curves, we determine those with infinitely many cubic points. 
As in the previous subsection, our main tool is \cite[Theorem 1.2]{kadets/vogt:2025}, stated above as Theorem~\ref{thm:KV}. We then determine maps to low genus curves using both the Castelnuovo-Severi inequality and computation of the isogeny decomposition of the Jacobian.

\subsubsection{Curves with no maps to elliptic curves}

If Castelnuovo-Severi is insufficient to rule out degree-$3$ maps to genus-$1$ curves, we can turn to the isogeny decomposition of the Jacobian. Given a curve $X$, if the Jacobian of $X$ does not have an elliptic isogeny factor, then $X$ does not admit a map to an elliptic curve.

\begin{lem}\label{lem:12(3,2)a}
    The genus-$9$ curve $X_1(\rm 12(3,2)a)$ has finitely many cubic points.
\end{lem}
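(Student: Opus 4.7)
The plan is to apply Theorem~\ref{thm:KV}, which, since the genus of $X_1({\rm 12(3,2)a})$ is $9 > 4$, reduces the claim to showing two things: that $X_1({\rm 12(3,2)a})$ admits no degree-$3$ morphism to $\bbP^1$ over $\bbQ$, and that it admits no morphism to an elliptic curve over $\bbQ$ at all. Given the heading of the subsection ``Curves with no maps to elliptic curves,'' I expect the latter to follow from the stronger statement that $J(X_1({\rm 12(3,2)a}))$ has no elliptic isogeny factor over $\bbQ$.

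First, I would exhibit an explicit model for $X_1({\rm 12(3,2)a})$ using the recipe in Section~\ref{sec:genDMC}: label the vertices $1,\dots,12$ and intersect the equations $F_{\bse}(c,x_1,\dots,x_{12})=0$ with the Zariski open conditions $x_i\ne x_j$. Because the portrait ${\rm 12(3,2)a}$ contains a $3$-cycle and a $2$-cycle as a subportrait, Proposition~\ref{prop:functoriality} supplies a dominant $\bbQ$-morphism $F\colon X_1({\rm 12(3,2)a}) \to X_1(3,2)$. I would compute the degree $e$ of $F$ in Magma (equivalently, count the number of portrait morphisms ${\rm 3,2}\hookrightarrow{\rm 12(3,2)a}$ and divide by automorphisms, or use the field-extension degree from the model) and look up the genus $g'$ of $X_1(3,2)$; these numbers are needed in the next step.

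Second, to rule out degree-$3$ maps to $\bbP^1$, I would apply Castelnuovo-Severi (Theorem~\ref{thm:CS}) to the pair of morphisms $F\colon X_1({\rm 12(3,2)a})\to X_1(3,2)$ and a putative degree-$3$ map $\phi\colon X_1({\rm 12(3,2)a})\to\bbP^1$. If Castelnuovo-Severi bound $9 \le 3g' + e\cdot 0 + 2(e-1)$ fails, the only remaining possibility is that $\phi$ and $F$ both factor through a common morphism of degree $\geq 2$; but since $\deg F$ and $\deg\phi=3$ are to be ruled out, I would check that $\gcd(\deg F,3)$ and the factorization patterns of $F$ through intermediate subportraits preclude this (using the explicit lattice of subportraits of ${\rm 12(3,2)a}$). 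Essentially the same method has been used in Propositions~\ref{prop:X1(m,4)} and~\ref{prop:X1(m,n,3)} above.

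Third, to rule out maps to elliptic curves, I would find a prime $p$ of good reduction for $X_1({\rm 12(3,2)a})$ using Lemma~\ref{lem:liu_reduction} (computing the geometric genus of the reduction of our affine model and confirming it equals $9$), and then compute in Magma the numerator of the zeta function of $X_1({\rm 12(3,2)a})_{\bbF_p}$. I would factor this polynomial over $\bbQ$; by Tate's theorem, any elliptic isogeny factor of $J(X_1({\rm 12(3,2)a}))$ over $\bbQ$ would contribute a degree-$2$ factor of the form $1 - a_pT + pT^2$. If no such factor appears for any one prime $p$ of good reduction, then $J$ has no elliptic isogeny factor and in particular no nonconstant map to an elliptic curve exists; this is exactly the strategy used for $X_1(5)$ in Proposition~\ref{prop:X1(n)} and for the curves listed in Table~\ref{table:no elliptic factors}. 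The main obstacle I anticipate is purely computational: ensuring the explicit model is small enough for Magma to compute the zeta function (or, equivalently, $\#X_1({\rm 12(3,2)a})(\bbF_{p^k})$ for $k=1,\dots,9$) in reasonable time. If the naive model at $\bbA^{13}$ is too large, I would replace it with the lower-dimensional model from \cite{doyle:2019} generated by the appropriate pair of ``generating'' vertices.
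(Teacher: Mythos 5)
Your proposal is correct and follows essentially the same route as the paper: reduce via Theorem~\ref{thm:KV}, rule out degree-$3$ maps to $\bbP^1$ by Castelnuovo--Severi against the degree-$2$ map to the genus-$2$ curve $X_1(3,2)\cong X_1({\rm 10(3,2)})$, and rule out maps to elliptic curves by showing $J(X_1({\rm 12(3,2)a}))$ has no elliptic isogeny factor via the zeta function at a good prime (the paper uses $p=3$, where the $L$-polynomial factors into pieces of degrees $4$, $4$, $10$). Two small points: Theorem~\ref{thm:KV} requires excluding maps of degree \emph{at most} $3$ to $\bbP^1$, not just degree exactly $3$ --- the paper disposes of the degree-$2$ case by citing the finiteness of quadratic points from \cite{doyle/krumm:2024}, and your Castelnuovo--Severi setup also handles it, but you should say so; and your displayed bound transposes the roles of the two covers (it should read $9 \le e g' + 2(e-1)$, which with $e=2$, $g'=2$ gives $6$), though the conclusion is unaffected.
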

\begin{proof}
    Let $X = X_1(\rm 12(3,2)a)$. By the main theorem of \cite{doyle/krumm:2024}, $X$ has finitely many quadratic points. Thus, by Theorem~\ref{thm:KV}, it suffices to show that $X$ is not a triple cover of $\mathbb{P}^1$ or a positive rank elliptic curve.

    We begin by showing that $X$ is not a triple cover of $\mathbb{P}^1$. Observe that $X$ admits a degree-$2$ map $\varphi$ to $Y = X_1({\rm 10(3,2)})$, which has genus $2$. Since $X$ has genus $9$, Theorem~\ref{thm:CS} says that if there exists a map $\theta: X \to \mathbb{P}^1$ of degree $d$, then either $\theta$ factors through $\varphi$ (hence has even degree) or
    \begin{align*}
        9 \leq d \cdot 0 + 2 \cdot 2 + (d-1)(2-1) = d + 3
    \end{align*}
    which shows that $d \geq 6$. Hence, $X$ does not admit a degree-$3$ map to $\mathbb{P}^1$.

    We now show that $X$ does not admit maps to any elliptic curves. Computing in Magma, we find that the reduction $X_{\bbF_3}$ has genus $9$, and thus $X$ has good reduction at $3$ by Lemma~\ref{lem:liu_reduction}. The L-polynomial (i.e., the numerator of the zeta function) of $X_{\bbF_3}$ has three factors over $\mathbb{Q}$: two of degree $4$ and one of degree $10$. By \cite[Theorem 7]{milne/waterhouse:1969} we find that $J(X_{\bbF_3})$ has three isogeny factors---two of dimension $2$ and one of dimension $5$. The isogeny decomposition of $J(X)$ over $\mathbb{Q}$ must be coarser than the isogeny decomposition of $J(X_{\bbF_3})$, and thus cannot contain an elliptic curve. Thus, there are no maps from $X$ to an elliptic curve.
\end{proof}

Table~\ref{table:no elliptic factors} lists several curves $X_1(\calP)$ which, like $X_1({\rm 12(3,2)a})$, satisfy the following conditions:
\begin{itemize}
    \item $X_1(\calP)$ admits a low-degree map to a low-genus curve, hence Castelnuovo-Severi rules out degree-$3$ maps to $\mathbb{P}^1$;
    \item $X_1(\calP)$ has a prime $p$ of good reduction such that the isogeny decomposition of the Jacobian over $\bbF_p$ does not contain an elliptic curve; and
    \item $X_1(\calP)$ has finitely many quadratic points by the main result of \cite{doyle/krumm:2024}.
\end{itemize}
Thus, every curve in Table~\ref{table:no elliptic factors} has finitely many cubic points.

\begin{table}
\centering
\begin{tabular}{|l|l|l|l|}
 \hline
 Curve $X_1(\cdot)$ & Genus & Degree-$d$ &  $p$ \\
  & & subcover & \\
 \hline \hline
 10(3)a & $9$ & $2$: 8(3) & $3$\\
 \hline
 10(3)b & $9$ & $2$: 8(3) & $5$\\
 \hline
 10(4) & $9$ & $2$: 8(4) & $7$\\
 \hline
 12(3,1,1)a & $9$ & $2$: 10(3,1,1) & $5$, $7$\\
 \hline
 12(3,2)a & $9$ & $2$: 10(3,2) & $3$ \\
 \hline
 12(3,2)b & $9$ & $2$: 10(3,2) & $5$ \\
 \hline

\end{tabular}
\caption{Curves $X_1(\mathcal{P})$ for which $J(X_1(\calP))$ has no elliptic isogeny factors, as verified by computing modulo a prime $p$ of good reduction. An entry of the form ``$d: \square$" in the third column indicates a degree-$d$ map to $X_1(\square)$.}
\label{table:no elliptic factors}
\end{table}

For each of the curves in Table~\ref{table:no elliptic factors} except for $X_1(\rm 12(3,1,1)a)$, the argument that the Jacobian has no elliptic isogeny factors is similar to the argument for Lemma~\ref{lem:12(3,2)a}, and we therefore omit the proof. However, the argument for $X_1(\rm 12(3,1,1)a)$ is slightly different, as we require the isogeny decomposition of the Jacobian at two different primes of good reduction. Thus, we prove this case separately.
\begin{lem}
    The Jacobian of the genus-$9$ curve $X_1(\rm 12(3,1,1)a)$ has no elliptic isogeny factors.
\end{lem}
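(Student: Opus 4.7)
The plan is to adapt the reduction-modulo-$p$ strategy of Lemma~\ref{lem:12(3,2)a}, but to use two primes of good reduction in tandem, since (in contrast to all the other entries of Table~\ref{table:no elliptic factors}) information from a single prime will turn out to be insufficient here. Set $X := X_1(\mathrm{12(3,1,1)a})$.

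First I will verify that $X$ has good reduction at $p = 5$ and $p = 7$ by computing each reduction $X_{\bbF_p}$ in Magma, checking that its geometric genus equals $9$, and invoking Lemma~\ref{lem:liu_reduction}. Next I will compute the L-polynomial $L_p(T)$ (the numerator of the zeta function) of $X_{\bbF_p}$ for each $p \in \{5,7\}$ and factor it into irreducibles over $\bbQ$. The reason two primes are needed is that—unlike the situation of Lemma~\ref{lem:12(3,2)a}—both $L_5(T)$ and $L_7(T)$ will contain a degree-$2$ irreducible factor, so the single-prime argument of that lemma cannot be applied: each prime individually leaves open the possibility of an elliptic isogeny factor of $J(X)$ over $\bbQ$.

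The key observation to exploit is that if $E/\bbQ$ is an elliptic curve appearing as an isogeny factor of $J(X)$ over $\bbQ$, then $E$ has good reduction at every prime of good reduction of $X$, and at any such prime $p$ the polynomial $T^2 - a_p(E)T + p$, where $a_p(E) = p + 1 - \#E(\bbF_p)$, must divide $L_p(T)$. Reading off the traces from the degree-$2$ factors of $L_5(T)$ and $L_7(T)$ therefore pins down a small finite list of candidate pairs $(a_5, a_7) \in \bbZ^2$ to which $(a_5(E), a_7(E))$ is constrained. Moreover, the primes of bad reduction of $E$ must lie among those of $X$, so the conductor of $E$ is bounded, and the question reduces to a finite look-up in the Cremona/LMFDB tables.

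The main obstacle is thus the final bookkeeping: producing the candidate pairs from the two L-polynomial factorizations and verifying that no elliptic curve $E/\bbQ$ of the allowed conductor range simultaneously realizes any of them. Once this matching fails, any elliptic isogeny factor of $J(X)$ over $\bbQ$ is excluded, completing the proof. All of this—the reductions, the geometric genus computations, the L-polynomial factorizations, and the candidate-matching against the tables—is intended to be carried out in Magma along the lines indicated in the discussion preceding Table~\ref{table:no elliptic factors}.
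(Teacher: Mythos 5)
Your outline is a legitimate strategy in principle, but as written it defers the decisive step to a computation whose outcome you neither perform nor have any reason to expect to be favorable. It is true that if $E$ is an elliptic isogeny factor of $J(X)$ over $\bbQ$, then $T^2 - a_p(E)T + p$ must occur among the irreducible degree-$2$ factors of $L_p(T)$ for every good prime $p$, and that $E$ has good reduction wherever $X$ does. But the degree-$2$ factors you read off at $5$ and $7$ correspond to genuine elliptic isogeny factors of $J(X)_{\bbF_5}$ and $J(X)_{\bbF_7}$, so there is no a priori reason the resulting candidate pair $(a_5,a_7)$ fails to be realized by some elliptic curve over $\bbQ$ of admissible conductor; if it is realized, your method is simply inconclusive. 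You would also first have to determine the bad primes of this genus-$9$ curve (itself a nontrivial computation you gloss over) in order to bound the conductor, and the resulting bound need not lie within the range of the Cremona/LMFDB tables. So what you have is a plan that might close, not a proof.

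The paper takes a structurally different and more robust route: it first splits the Jacobian. The model $y^2 = f(x)$, $z^2 = g(x)$ carries two commuting involutions whose three intermediate quotients are two genus-$2$ curves (birational to $X_1({\rm 10(3,1,1)})$ and $X_1({\rm 8(3)})$) and the genus-$5$ hyperelliptic curve $w^2 = f(x)g(x)$; since the common quotient is $\bbP^1$ and $2+2+5=9$, one gets $J(X) \sim J(C_1)\times J(C_2)\times J(C_3)$ up to isogeny. It then suffices to show each factor is simple, and one is free to use a different prime for each piece: modulo $5$ the two genus-$2$ Jacobians are simple (and non-isogenous), and modulo $7$ the genus-$5$ Jacobian is simple. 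Any putative elliptic factor is thereby localized and killed with single-prime information, with no cross-prime trace matching, no conductor bounds, and no table lookup. This decomposition is exactly the ingredient your proposal is missing; without it (or some substitute that tells you where in $J(X)$ the mod-$5$ and mod-$7$ elliptic pieces sit), the two reductions cannot be played off against each other in the clean way you intend.
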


\begin{proof}
    Let $X = X_1(\rm 12(3,1,1)a)$.
    The model
    \[
        \left\{
        \begin{split}
            y^2 &= f(x) := x^6 + 2x^5 + 5x^4 + 10x^3 + 10x^2 + 4x + 1\\
            z^2 &= g(x) := x^6 - 2x^4 + 2x^3 + 5x^2 + 2x + 1
        \end{split}
        \right.
    \]
    for $X$ is given in \cite[Lemma 3.53]{doyle/faber/krumm:2014}.\footnote{While the curve is defined in \cite[Lemma 3.53]{doyle/faber/krumm:2014} to study the portrait 14(3,1,1), the data used to define the curve was only the data of the proper subgraph 12(3,1,1)a}
    The curve $X$ is a double cover of two distinct genus-$2$ curves, namely $y^2 = f(x)$ and $z^2 = g(x)$, as well as the genus-$5$ hyperelliptic curve $w^2 = f(x)g(x)$. (The genus-$2$ curves are birational to $X_1(\rm 10(3,1,1))$ and $X_1(\rm8(3))$, respectively.) Reducing modulo $5$, the genus-$2$ curves have simple Jacobians which are non-isogenous, as the numerators of their zeta functions are not equal. Reducing modulo $7$, the genus-$5$ curve has simple Jacobian. Thus, the factorization of $J(X)$ is completely determined, and $J(X)$ does not contain an elliptic curve.
\end{proof}

\subsubsection{Curves with no repeated elliptic isogeny factors}

We give an example of how Lemma~\ref{lem:distinct elliptic factors of jacobian} can be combined with Castelnuovo-Severi to determine if a curve has finitely many cubic points.

\begin{lem}\label{lem:no repeated factors example}
    The genus-$5$ curve $X_1(\rm 10(1,1)c)$ has finitely many cubic points.
\end{lem}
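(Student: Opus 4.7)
My plan is to combine Kadets--Vogt, Castelnuovo--Severi, and the Jacobian decomposition machinery of Lemma~\ref{lem:distinct elliptic factors of jacobian}. Since $X = X_1(\rm 10(1,1)c)$ has genus $5 > 4$, Theorem~\ref{thm:KV} reduces the problem to showing that $X$ admits no $\bbQ$-morphism of degree at most $3$ either to $\bbP^1$ or to an elliptic curve of positive rank over $\bbQ$. These are exactly the two cases that must be excluded.

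First I would address maps to $\bbP^1$. The portrait $\rm 10(1,1)c$ properly contains smaller generic quadratic portraits with cycle structure $(1,1)$ (for instance $\rm 8(1,1)a$, $\rm 8(1,1)b$, $\rm 6(1,1)$, $\rm 4(1,1)$, or $\emptyset$), and by Proposition~\ref{prop:functoriality} each such inclusion induces a dominant morphism from $X$ to the corresponding lower-genus dynamical modular curve. Let $\pi : X \to X_1(\calQ)$ be such a morphism of degree $e \ge 2$. A hypothetical degree-$3$ morphism $\varphi : X \to \bbP^1$, together with $\pi$, is subject to Castelnuovo--Severi (Theorem~\ref{thm:CS}): either $\pi$ and $\varphi$ factor through a common cover of degree greater than one --- which is ruled out once $\gcd(e,3)=1$ and one checks that $\pi$ itself does not factor further --- or the inequality $5 \le 3\,g(X_1(\calQ)) + 2(e-1)$ holds. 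Choosing $\calQ$ to minimize the right-hand side (ideally with $g(X_1(\calQ)) \in \{0,1\}$ and $e$ small) should produce a contradiction and rule out all degree-$3$ maps to $\bbP^1$.

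Second I would handle the elliptic case via the isogeny decomposition of the Jacobian. I would select a prime $p$ of good reduction for $X$, certified by Lemma~\ref{lem:liu_reduction} after checking in Magma that the reduction of an explicit affine model has the correct geometric genus, and then compute the L-polynomial of $X_{\bbF_p}$. Factoring this polynomial over $\bbQ$ gives the isogeny decomposition of $J(X_{\bbF_p})$; since the decomposition of $J(X)$ over $\bbQ$ is coarser, it can only be of the form $E_1 \times \cdots \times E_n \times A$ in which the elliptic factors $E_i$ are pairwise non-isogenous (this lemma sitting in the ``no repeated elliptic factors'' section) and $A$ contains no elliptic subvariety. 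For each $E_i$ I would then exhibit a morphism $X \to F_i$, with $F_i$ an elliptic curve isogenous to $E_i$, arising either from a subportrait projection (Proposition~\ref{prop:functoriality}) or from a quotient by a subgroup of $\Aut(\rm 10(1,1)c)$ (Proposition~\ref{prop:auts}), and verify that its degree is coprime to $3$. Lemma~\ref{lem:distinct elliptic factors of jacobian} with $d = 3$ then forbids any degree-$3$ map from $X$ to an elliptic curve, completing the proof.

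The main obstacle is this last step: locating a prime $p$ at which the L-polynomial of $X_{\bbF_p}$ cleanly separates the elliptic factors into pairwise non-isogenous classes, and then producing explicit low-degree maps from $X$ to curves isogenous to each of these classes. The natural candidates for the witnessing maps are projections onto genus-$1$ quotients already appearing in $\Gamma(1)$ and $\Gamma(2)$, but one must match them against the Magma-computed isogeny factors and verify the coprimality-to-$3$ condition on degrees. Everything else --- the Castelnuovo--Severi step, the genus computations, and the application of Lemma~\ref{lem:distinct elliptic factors of jacobian} --- is then routine.
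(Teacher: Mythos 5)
Your proposal follows essentially the same route as the paper's proof: Theorem~\ref{thm:KV}, Castelnuovo--Severi against the degree-$2$ map to a genus-$1$ quotient to exclude low-degree maps to $\bbP^1$, and Lemma~\ref{lem:distinct elliptic factors of jacobian} applied to the two degree-$2$ maps onto $X_1({\rm 8(1,1)a})$ and $X_1({\rm 8(1,1)b})$ (the non-isogenous curves 24A4 and 11A3), with the isogeny decomposition $E\times F\times A$ certified by reducing modulo a good prime (the paper uses $p=17$, where the L-polynomial factors as $2+2+6$). The only cosmetic difference is that the paper reduces from ``degree at most $3$'' to ``degree exactly $3$'' by citing the finiteness of quadratic points from \cite{doyle/krumm:2024}, whereas your plan handles the degree-$2$ cases directly; both are fine.
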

\begin{proof}
    By the main result of \cite{doyle/krumm:2024}, $X = X_1({\rm 10(1,1)c})$ has only finitely many quadratic points; thus, by Theorem~\ref{thm:KV}, it suffices to show that $X$ does not admit a degree-$3$ map to $\bbP^1$ or to an elliptic curve.
    
    The portrait 10(1,1)c has both 8(1,1)a and 8(1,1)b as subportraits, and thus $X$ has degree-$2$ maps to $X_1(\rm 8(1,1)a)$ and $X_1(\rm 8(1,1)b)$, which are non-isogenous elliptic curves (isomorphic to the curves with Cremona labels 24A4 and 11A3, respectively). Thus, we see that $J(X)$ has at least two distinct elliptic isogeny factors, which we denote $E$ and $F$.

    To apply Lemma~\ref{lem:distinct elliptic factors of jacobian}, we show that $J(X) \sim E \times F \times A$, where $A$ is simple of dimension $3$. Computing in Magma, we find that the reduction $X_{\bbF_{17}}$ has genus $5$, and thus $X$ has good reduction at $17$ by Lemma~\ref{lem:liu_reduction}. The L-polynomial of $X_{\bbF_{17}}$ has three factors over $\mathbb{Q}$: two of degree $2$ and one of degree $6$. By \cite[Theorem 7]{milne/waterhouse:1969} we find that $J(X_{\bbF_{17}})$ has three isogeny factors: two elliptic curves and one abelian variety of dimension $3$. The isogeny decomposition of $J(X)$ over $\mathbb{Q}$ must be coarser than the isogeny decomposition of $J(X_{\bbF_{17}})$ and as we already know that $J(X)$ contains two non-isogenous elliptic curves, we find that $J(X) \sim E \times F \times A$ as desired.
    We can now apply Lemma~\ref{lem:distinct elliptic factors of jacobian} to the two degree-$2$ maps $X \to X_1(8(1,1)\text{a})$ and $X \to X_1(\rm 8(1,1)b)$, showing that $X$ does not admit a degree-$3$ map to any elliptic curve.

    Finally, we show that $X$ does not admit a degree-$3$ map to $\mathbb{P}^1$. The curve $X$ has genus $5$ and a degree-$2$ map $\varphi:X \to E$ to an elliptic curve, so if there exists a map $\theta: X \to \mathbb{P}^1$ of degree $d$, we have that $\theta$ factors through $\varphi$ or that
    \begin{align*}
        5 \leq d \cdot 0 + 2 \cdot 1 + (d-1)(2-1) = d + 1.
    \end{align*}
    In either case, we have $d \geq 4$, so $X$ does not admit a degree-$3$ map to $\mathbb{P}^1$.
\end{proof}

We note that the key ingredients in proving that $X_1(\rm 10(1,1)c)$ has finitely many cubic points were the following:
\begin{itemize}
    \item a map to a lower genus curve to use with Castelnuovo-Severi to rule out degree-$3$ maps to $\mathbb{P}^1$;
    \item a collection of pairwise non-isogenous elliptic curves $E_1,\ldots,E_n$ and maps $\varphi_i : X \to E_i$ of degree not divisible by $3$;
    \item a prime $p$ of good reduction for $X$ such that the isogeny decomposition of $J(X_{\bbF_p})$ is $E_1 \times \ldots \times E_n \times A$, where $A$ does not contain any elliptic curves; and
    \item knowledge that $X$ has finitely many quadratic points (from \cite{doyle/krumm:2024}).
\end{itemize}

By the main theorem of \cite{doyle/krumm:2024}, all the curves $X_1(\mathcal{P})$ for $\mathcal{P}$ in Table~\ref{table:no repeated elliptic factors} have finitely many quadratic points. We list the necessary information in Table~\ref{table:no repeated elliptic factors} to conclude that those curves also have only finitely many cubic points. Note that the elliptic curves listed in the fourth column of Table~\ref{table:no repeated elliptic factors} are described in one of two ways: Let $\calP$ be one of the portraits in the first column. If the fourth column of that row includes ``$d$: $\calP'$" for a portrait $\calP'$, then $\calP'$ is a subgraph of $\calP$, and the elliptic quotient is one of the maps $X_1(\calP)\to X_1(\calP')$ guaranteed by Proposition~\ref{prop:functoriality}. If not, then the target elliptic curve $E$ is listed by its Cremona label, and the map $X_1(\calP)\to E$ is a quotient obtained by an automorphism of $\calP$; see Proposition~\ref{prop:auts} and the discussion that follows the proof.

\begin{table}
\centering
\begin{tabular}{|l|l|l|l|l|l|}
 \hline
 \Shortunderstack{Curve \\ $X_1(\cdot)$} & Genus & \Shortunderstack{Degree-$d$\\subcover} & \Shortunderstack{Degree-$d$\\elliptic\\quotients} & $p$ & \Shortunderstack{Elliptic\\ isogeny\\factors} \\
 \hline \hline
 4, 1 & $9$ & $2$: 8(4) & 4: 43A1 & $7$ & 43A1 ($r=1$)\\
 \hline
 4, 2 & $9$ & $2$: 8(4) & $4$: 27A3 & $7$ & 27A3\\
 \hline
 3, 2, 1 & $9$ & $2$: 10(3,2) & $3$: 32A2 & 7 &32A2\\
 \hline\hline
 10(1,1)b & $5$ & $2$: 8(1,1)a & $2$: 8(1,1)a & $5$ & 24A4\\
 \hline
 10(1,1)c & $5$ & $2$: 8(1,1)a & $2$: 8(1,1)a & $17$ & 24A4\\
 & & & $2$: 8(1,1)b & & 11A3 \\
 \hline
 10(2)a & $5$ & $2$: 8(2)b & $2$: 8(2)b & $3$ & 11A3\\
 \hline
 10(2)b & $5$ & $2$: 8(2)a & $2$: 8(2)a & $3$ & 40A3\\
  & & $2$: 8(2)b & $2$: 8(2)b & & 11A3\\
 \hline
 12(2,1,1)b & $5$ & $2$: 10(2,1,1)b & $2$: 8(1,1)b & $19$ & 11A3\\
  & & & $2$: 10(2,1,1)b & & 15A8\\
 \hline
 12(2,1,1)c & $5$ & $2$: 10(2,1,1)a & $2$: 8(2)b& $13$ & 11A3\\
  & & & $2$: 10(2,1,1)a & & 17A4\\
 \hline
 12(2,1,1)e & $5$ & $2$: 10(2,1,1)b & $2$: 10(2,1,1)a & $19$ & 17A4\\
  & & & $2$: 10(2,1,1)b & & 15A8\\
 \hline
 12(3,1,1)b & $9$ & $2$: 10(3,1,1) & $3$: 14A4 & $11$ & 14A4\\
 \hline

\end{tabular}
\caption{Curves $X_1(\mathcal{P})$ without repeated elliptic isogeny factors of $J(X_1(\mathcal{P}))$, verified by computing modulo a prime $p$ of good reduction. An entry of the form ``$d: \square$" in the third or fourth column indicates a degree-$d$ map to $\square$, if $\square$ is a Cremona label, or $X_1(\square)$, if $\square$ is a portrait.}
\label{table:no repeated elliptic factors}
\end{table}

Table ~\ref{table:no repeated elliptic factors} shows that the curves $X_1(3,2,1)$ and $X_1(12(3,1,1)\text{b})$ admit degree-$3$ maps to elliptic curves---namely those with Cremona labels 32A2 and 14A4, respectively---so the methods of Lemma ~\ref{lem:no repeated factors example} do not apply. Instead, we use the fact that, in each case, the single elliptic isogeny factor has rank $0$. We provide the short proof for $X_1(12(3,1,1)\text{b})$ here; the proof for $X_1(3,2,1)$ is similar.

\begin{lem}
    The genus-$9$ curve $X_1(\rm 12(3,1,1)b)$ has finitely many cubic points.
\end{lem}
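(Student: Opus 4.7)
The plan is to apply Theorem~\ref{thm:KV} to $X = X_1({\rm 12(3,1,1)b})$, which has genus $9$ and, by the main theorem of \cite{doyle/krumm:2024}, has only finitely many quadratic points. It therefore suffices to show that $X$ admits no degree-$3$ morphism to $\bbP^1$ and no degree-$3$ morphism to an elliptic curve of positive rank over $\bbQ$.

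For the first task, I would use the degree-$2$ morphism $\varphi: X \to X_1({\rm 10(3,1,1)})$ arising from the subportrait ${\rm 10(3,1,1)} \subset {\rm 12(3,1,1)b}$ via Proposition~\ref{prop:functoriality}; note $X_1({\rm 10(3,1,1)})$ has genus $2$. By the Castelnuovo-Severi inequality (Theorem~\ref{thm:CS}), any morphism $\theta : X \to \bbP^1$ of degree $d$ either factors through $\varphi$ (so $d$ is even) or satisfies $9 \leq 2\cdot 2 + (d-1)(2-1) = d + 3$, i.e.\ $d \geq 6$. Either way $d \neq 3$.

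For the second task, the strategy is different from that of Lemma~\ref{lem:no repeated factors example} because Table~\ref{table:no repeated elliptic factors} records a degree-$3$ quotient $X \to 14{\rm A4}$, so Lemma~\ref{lem:distinct elliptic factors of jacobian} does not apply directly. Instead, I would verify that \emph{the only elliptic isogeny factor of $J(X)$ is $14{\rm A4}$}, and exploit the fact that $14{\rm A4}$ has Mordell-Weil rank $0$. Concretely, I would compute in Magma that the reduction $X_{\bbF_{11}}$ has genus $9$ (so $X$ has good reduction at $11$ by Lemma~\ref{lem:liu_reduction}), factor the L-polynomial of $X_{\bbF_{11}}$ over $\bbQ$, and apply \cite[Theorem 7]{milne/waterhouse:1969} to obtain the isogeny decomposition of $J(X_{\bbF_{11}})$. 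Since the isogeny decomposition of $J(X)/\bbQ$ must be refined by that of $J(X_{\bbF_{11}})$, the expected output is $J(X) \sim 14{\rm A4} \times A$ with $A$ containing no elliptic subvariety.

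Once that decomposition is established, Theorem~\ref{thm: kani elliptic subgroups} shows that any morphism from $X$ to an elliptic curve factors (up to isogeny) through an elliptic subgroup of $J(X)$, and every such subgroup is isogenous to $14{\rm A4}$. Hence any elliptic target is isogenous to $14{\rm A4}$ and so has rank $0$, ruling out degree-$3$ maps to positive-rank elliptic curves. Combined with the Castelnuovo-Severi step and Theorem~\ref{thm:KV}, this completes the proof. I expect the only real obstacle to be the Magma computation of the L-polynomial factorization at $p = 11$; this is routine but essential, as it upgrades the presence of the rank-$0$ factor $14{\rm A4}$ into the statement that it is the \emph{unique} elliptic factor of $J(X)$.
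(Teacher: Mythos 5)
Your proposal is correct and follows essentially the same route as the paper: Theorem~\ref{thm:KV} plus Castelnuovo--Severi applied to the degree-$2$ map to the genus-$2$ curve $X_1({\rm 10(3,1,1)})$ to exclude trigonal maps to $\bbP^1$, then reduction at $p=11$ to see that $14$A$4$ (of rank $0$) is the unique elliptic isogeny factor of the Jacobian, so the degree-$3$ elliptic subcover $X \to 14\mathrm{A}4$ cannot produce infinitely many cubic points. The paper's proof is just a terser version of this, with the $p=11$ computation and the degree-$3$ quotient recorded in Table~\ref{table:no repeated elliptic factors}.
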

\begin{proof}
    Let $X = X_1(\rm 12(3,1,1)b)$. As $X$ has genus $9$, it suffices (by Theorem~\ref{thm:KV}) to show that $X$ is not a triple cover of $\mathbb{P}^1$ or a positive rank elliptic curve. Castelnuovo-Severi rules out triple covers of $\mathbb{P}^1$, and the isogeny decomposition of $J(X_{\bbF_{11}})$ shows $X$ covers a unique elliptic curve; more specifically, $X$ is a triple cover of 14A4. Over $\mathbb{Q}$, however, 14A4 has rank 0, and hence $X/\mathbb{Q}$ has finitely many cubic points.
\end{proof}

\subsubsection{Curves with repeated elliptic isogeny factors}

We now show how Lemma~\ref{lem:repeated elliptic factors of jacobian} can be used to prove that a curve whose Jacobian has repeated elliptic isogeny factors has finitely many cubic points.

\begin{lem}
    The genus-$5$ curve $X_1(\rm 12(2,1,1)a)$ has finitely many cubic points.
\end{lem}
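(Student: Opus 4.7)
The plan is to mirror the strategy of Lemma~\ref{lem:no repeated factors example} but use Lemma~\ref{lem:repeated elliptic factors of jacobian} in place of Lemma~\ref{lem:distinct elliptic factors of jacobian}, since the labelling in Table~\ref{table:no repeated elliptic factors} (which omits the ``a'' variant) signals that $J(X)$ has a repeated elliptic isogeny factor. Set $X := X_1(\mathrm{12(2,1,1)a})$. Since $g(X)=5>4$ and $X$ has only finitely many quadratic points by the main result of \cite{doyle/krumm:2024}, Theorem~\ref{thm:KV} reduces the problem to showing that $X$ admits neither a degree-$3$ morphism to $\bbP^1$ nor a degree-$3$ morphism to a positive-rank elliptic curve.

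For the first reduction, note that 10(2,1,1)a (respectively 10(2,1,1)b) is a subportrait of 12(2,1,1)a, so by Proposition~\ref{prop:functoriality} there is a degree-$2$ morphism $\varphi : X \to Y$ with $Y = X_1(\mathrm{10(2,1,1)})$, and $Y$ is an elliptic curve (genus $1$). Applying the Castelnuovo--Severi inequality (Theorem~\ref{thm:CS}) to $\varphi$ together with a hypothetical degree-$3$ map $\theta : X \to \bbP^1$, either $\theta$ factors through $\varphi$ (impossible, since $3$ is odd) or
\[
5 \;\leq\; 3\cdot 0 \;+\; 2\cdot 1 \;+\; (3-1)(2-1) \;=\; 4,
\]
a contradiction. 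Thus $X$ admits no degree-$3$ cover of $\bbP^1$.

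For the second reduction, I would locate a small prime $p$ of good reduction for $X$ (verified in Magma via Lemma~\ref{lem:liu_reduction}) and compute the factorization of the $L$-polynomial of $X_{\bbF_p}$. The expectation, based on the pattern of the other 12(2,1,1) variants in Table~\ref{table:no repeated elliptic factors} and the two coincidences ${\rm 8(1,1)b} \cong {\rm 8(2)b} \cong {\rm 11A3}$, is that the isogeny decomposition takes the form
\[
J(X) \;\sim\; E^{2} \times F_{1} \times \cdots \times F_{k} \times A,
\]
where $E$ is isogenous to a rank-$0$ elliptic curve over $\bbQ$ (plausibly 11A3), each $F_{i}$ is a distinct elliptic curve appearing with multiplicity one, and $A$ contains no elliptic subgroup. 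To apply Lemma~\ref{lem:repeated elliptic factors of jacobian}, I would then exhibit, for each $F_{i}$, an explicit morphism $X \to F_{i}'$ (with $F_{i}' \sim F_{i}$) of degree not divisible by $3$, produced either as a quotient by a nontrivial element of $\Aut(\mathrm{12(2,1,1)a})$ (Proposition~\ref{prop:auts}) or via a subportrait inclusion yielding a degree-$2$ map (Proposition~\ref{prop:functoriality})---for instance, degree-$2$ maps down to $X_1(\mathrm{8(1,1)a}) \cong {\rm 24A4}$, $X_1(\mathrm{8(1,1)b}) \cong {\rm 11A3}$, and $X_1(\mathrm{10(2,1,1)}) \cong {\rm 17A4}$ or ${\rm 15A8}$.

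The main obstacle is purely computational: matching the $\bbF_{p}$-isogeny factors of $J(X_{\bbF_p})$ with specific elliptic curves defined over $\bbQ$, and in particular verifying that the repeated factor really does come from a rank-$0$ curve. If no single prime of good reduction produces a clean factorization that pins down the $\bbQ$-isogeny classes, I would combine information at two primes (as in the argument for $X_1(\mathrm{12(3,1,1)a})$) to force the decomposition; once this is done, Lemma~\ref{lem:repeated elliptic factors of jacobian} immediately yields the conclusion.
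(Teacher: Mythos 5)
Your overall skeleton is the paper's: Theorem~\ref{thm:KV} plus finiteness of quadratic points, Castelnuovo--Severi via the degree-$2$ map to the genus-$1$ curve $X_1(\rm 10(2,1,1)a)$ to kill degree-$3$ maps to $\bbP^1$ (your inequality is exactly the paper's), and then Lemma~\ref{lem:repeated elliptic factors of jacobian} for the elliptic case. Where you diverge is in how the isogeny decomposition is obtained. The paper does \emph{not} reduce modulo a prime here (hence the ``N/A'' in Table~\ref{table:repeated elliptic factors}): instead it exploits the order-$16$ group $\Aut(\rm 12(2,1,1)a)$ acting on the Galois cover $c : X \to \bbP^1$. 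Two conjugate involutions give two distinct degree-$2$ quotients onto $X_1(\rm 10(2,1,1)a) \cong {\rm 17A4}$, forcing $\rm 17A4^2$ into $J(X)$; a third involution gives $X_1(\rm 8(2)a)\cong{\rm 40A3}$; and two order-$8$ subgroups give degree-$4$ quotients onto $\rm 136A2$ and $\rm 170A2$. Since these five elliptic factors already account for all of $\dim J(X)=5$, the decomposition $J(X)\sim{\rm 17A4}^2\times{\rm 40A3}\times{\rm 136A2}\times{\rm 170A2}$ is complete with no residual factor $A$, the repeated factor $\rm 17A4$ has rank $0$, and every quotient map has $2$-power degree, so the lemma applies with nothing left to check mod $p$. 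Your mod-$p$ route could in principle confirm that no factors are missing, but it cannot by itself identify the $\bbQ$-isogeny classes or the rank of the repeated factor---you would still need the explicit quotient maps, which is where the real work lies.

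Two concrete slips in your guessed data: neither $\rm 8(1,1)a$ nor $\rm 8(1,1)b$ is a subportrait of $\rm 12(2,1,1)a$ (its fixed points have no second-level preimages), so the degree-$2$ maps to $\rm 24A4$ and $\rm 11A3$ you propose do not exist; and the repeated factor is $\rm 17A4$ (coming from the two embeddings of $\rm 10(2,1,1)a$), not $\rm 11A3$. Since you hedged these as expectations to be verified, they do not break the argument, but as written the list of maps you would feed into Lemma~\ref{lem:repeated elliptic factors of jacobian} is not available. Note also that two of the multiplicity-one factors ($\rm 136A2$ and $\rm 170A2$) have rank $1$, so it is essential---as the lemma requires and the paper arranges---that the maps to them have degree coprime to $3$.
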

\begin{proof}
    Let $X = X_1(\rm 12(2,1,1)a)$. Note first that Castelnuovo-Severi applied to the degree-$2$ map from $X$ to the genus-$1$ curve $X_1(\rm 10(2,1,1)a)$ rules out degree-$3$ maps to $\mathbb{P}^1$.
    
    By Proposition~\ref{prop:auts}, the automorphism group\footnote{In fact, by \cite[Remark 3.10]{doyle:2019}, the cover $c: X \to \bbP^1$ is Galois and $\Aut({\rm 12(2,1,1)a})$ is its Galois group.} of the morphism $c: X \to \mathbb{P}^1$ is isomorphic to the automorphism group of the portrait $\rm 12(2,1,1)a$. That group has order $16$ and is generated by three elements of order $2$: interchanging the fixed point components, reflecting the period-$2$ component across the vertical axis, and interchanging the leftmost vertices of the period-$2$ component. Denote these automorphisms by $\sigma_1$, $\sigma_2$, and $\sigma_3$, respectively.
    
    The automorphism $\sigma_3$ swaps the leftmost vertices of the period-$2$ component, and $\sigma_2\sigma_3\sigma_2$ swaps the rightmost vertices. The quotient of $X$ by each of these involutions is isomorphic to $X_1({\rm 10(2,1,1)a})$, which is an elliptic curve with Cremona label 17A4. Since these two maps cannot factor through a common map, the curve 17A4 appears (at least) twice in the isogeny factorization of $J(X)$.
    Similarly, the quotient of $X$ by the involution $\sigma_1$ is $X_1({\rm 8(2)a})$, which has Cremona label 40A3.

    Furthermore, a computation in Magma shows that taking the quotient of $X$ by each of the order-$8$ subgroups $\langle \sigma_1\sigma_3,\ \sigma_2\rangle$ and $\langle \sigma_1\sigma_3,\ \sigma_2\sigma_3\rangle$ yields an elliptic curve, namely 136A2 and 170A2, respectively.

    Given all of the quotient maps above, we conclude that $J(X)$ can, up to isogeny, be decomposed as
    \[
    J(X) \sim \rm 17A4^2 \times 40A3 \times 136A2 \times 170A2.
    \]
    Since the curve 17A4 has rank $0$, and since all of the quotient maps described above have degree a power of $2$, we can apply Lemma~\ref{lem:repeated elliptic factors of jacobian} to conclude that $X$ has finitely many cubic points.
\end{proof}

We note that similar arguments apply to both $\rm 12(2,1,1)d$ and $\rm 10(2)c$, so we omit the details in these cases. A summary of the relevant details is given in Table~\ref{table:repeated elliptic factors}.

\begin{table}
\centering
\begin{tabular}{|l|l|l|l|l|l|}
 \hline
 \Shortunderstack{Curve \\ $X_1(\cdot)$} & 
 Genus &
 \Shortunderstack{Degree-$d$\\subcover} & \Shortunderstack{Degree-$d$\\elliptic\\quotients} & $p$ & \Shortunderstack{Elliptic\\ isogeny\\factors} \\
 \hline \hline
  10(2)c & $5$ & $2$: 8(2)b & $2$: 8(2)b \ ($\times 2$)& $5$ & 11A3\ ($\times 2$)\\
  & & & $2$: 176C1 & & 176C1 ($r=1$)\\
 \hline
 12(2,1,1)a & $5$ & $2$: 10(2,1,1)a & $2$: 10(2,1,1)a ($\times 2$) & N/A & 17A4 ($\times 2$)\\
  & & & $2$: 40A3 & & 40A3\\
  & & & $4$: 136A2 & & 136A2 ($r=1$)\\
  & & & $4$: 170A2 & & 170A2 ($r=1$)\\
 \hline
 12(2,1,1)d & $5$ & $2$: 10(2,1,1)b & $2$: 10(2,1,1)b ($\times 2$) & N/A & 15A8 ($\times 2$)\\
  & & & $4$: 24A1 & & 24A1\\
  & & & $4$: 30A1 & & 30A1\\
  & & & $4$: 120B1 & & 120B1\\
 \hline

\end{tabular}
\caption{Curves $X_1(\mathcal{P})$ with repeated elliptic isogeny factors of $J(X_1(\mathcal{P}))$. An entry of the form ``$d: \square$" in the third or fourth column indicates a degree-$d$ map to $\square$, if $\square$ is a Cremona label, or $X_1(\square)$, if $\square$ is a portrait.}
\label{table:repeated elliptic factors}
\end{table}

\section{Dynamical modular curves with infinitely many cubic points}\label{sec:infinitely_many}

In this section, we prove Theorem~\ref{thm:main_curves}. We begin with a brief outline of the proof.

Note that if $\calP \subseteq \calP'$ are two generic quadratic portraits, then there is a morphism $X_1(\calP') \to X_1(\calP)$ defined over $\bbQ$, so if $X_1(\calP)$ has only finitely many cubic points, the same is true of $X_1(\calP')$. Thus, we proceed as follows:

\begin{enumerate}[label=(\arabic*)]
    \item If $\calP$ has a cycle structure not appearing in the statement of Proposition~\ref{prop:cycle_structures}, then $X_1(\calP)$ has finitely many cubic points; thus, we restrict ourselves to the cycle structures in Proposition~\ref{prop:cycle_structures}.
    \item For each allowable cycle structure, we provide (in Figures~\ref{fig:(1,1)}--\ref{fig:(3,3)}) the directed system of small portraits with that cycle structure. An arrow $\calP' \to \calP$ indicates that $\calP \subseteq \calP'$, hence there is a morphism $X_1(\calP') \to X_1(\calP)$. The box surrounding a portrait label is dashed (resp., solid) if the corresponding dynamical modular curve has infinitely many (resp., finitely many) cubic points.
    \item Finally, we use the results of Section~\ref{sec:finitely_many} to determine, for each cycle structure, which portraits have infinitely many cubic points and which have finitely many.
\end{enumerate}

\subsection{Cycle structure $(1,1)$}

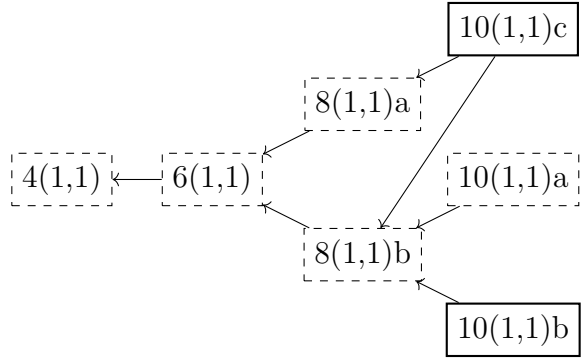
\begin{figure}
    \begin{tikzpicture}[scale=1,
    dashednode/.style={rectangle, draw=black, dashed, minimum size=5mm},
    solidnode/.style={rectangle, draw=black, thick, minimum size=5mm}
    ]
    \tikzset{vertex/.style = {}}
    \tikzset{every loop/.style={min distance=10mm,in=45,out=-45,->}}
    \tikzset{edge/.style={decoration={markings,mark=at position 1 with %
        {\arrow[scale=1.2,>=stealth]{>}}},postaction={decorate}}}
    %
    %
    \node[dashednode] (411) at (0,0) {4(1,1)};
    \node[dashednode] (611) at (2,0) {6(1,1)};
    \node[dashednode] (811a) at (4,1) {8(1,1)a};
    \node[dashednode] (811b) at (4,-1) {8(1,1)b};
    \node[dashednode] (1011a) at (6,0) {10(1,1)a};
    \node[solidnode] (1011c) at (6,2) {10(1,1)c};
    \node[solidnode] (1011b) at (6,-2) {10(1,1)b};
    %
    %
    \draw[->] (611) to (411);
    \draw[->] (811a) to (611);
    \draw[->] (811b) to (611);
    \draw[->] (1011a) to (811b);
    \draw[->] (1011b) to (811b);
    \draw[->] (1011c) to (811a);
    \draw[->] (1011c) to (811b);
    \end{tikzpicture}
    \caption{Portraits with cycle structure $(1,1)$ and at most ten vertices}
    \label{fig:(1,1)}
\end{figure}

The dynamical modular curves associated to 4(1,1) and 6(1,1) are isomorphic to $\bbP^1$, and the curves associated to 8(1,1)a and 8(1,1)b are elliptic curves; see \cite{poonen:1998}. In particular, each of these four curves has infinitely many cubic points. Furthermore, the curve associated to 10(1,1)a has genus $4$ but admits a degree-$3$ map to $\bbP^1$ defined over $\bbQ$, as verified by the function {\tt Genus4GonalMap} in Magma. (Note that the map $c : X_1(\rm10(1,1)a)\to \bbP^1$ has degree $16$, so the degree-$3$ map does not have a natural dynamical interpretation.)

The curves associated to 10(1,1)b/c were shown to have finitely many cubic points in Table~\ref{table:no repeated elliptic factors}. Finally, any portrait $\calP$ with cycle structure $(1,1)$ and at least twelve vertices must contain one of 10(1,1)b and 10(1,1)c, hence $X_1(\calP)$ has only finitely many cubic points.

\subsection{Cycle structure $(2)$}

\begin{figure}
    \begin{tikzpicture}[scale=1,
    dashednode/.style={rectangle, draw=black, dashed, minimum size=5mm},
    solidnode/.style={rectangle, draw=black, thick, minimum size=5mm}
    ]
    \tikzset{vertex/.style = {}}
    \tikzset{every loop/.style={min distance=10mm,in=45,out=-45,->}}
    \tikzset{edge/.style={decoration={markings,mark=at position 1 with %
        {\arrow[scale=1.2,>=stealth]{>}}},postaction={decorate}}}
    %
    %
    \node[dashednode] (42) at (0,0) {4(2)};
    \node[dashednode] (62) at (2,0) {6(2)};
    \node[dashednode] (82a) at (4,1) {8(2)a};
    \node[dashednode] (82b) at (4,-1) {8(2)b};
    \node[solidnode] (102c) at (6,0) {10(2)c};
    \node[solidnode] (102b) at (6,2) {10(2)b};
    \node[solidnode] (102a) at (6,-2) {10(2)a};
    %
    %
    \draw[->] (62) to (42);
    \draw[->] (82a) to (62);
    \draw[->] (82b) to (62);
    \draw[->] (102a) to (82b);
    \draw[->] (102b) to (82a);
    \draw[->] (102b) to (82b);
    \draw[->] (102c) to (82b);
    \end{tikzpicture}
    \caption{Portraits with cycle structure $(2)$ and at most ten vertices}
    \label{fig:(2)}
\end{figure}
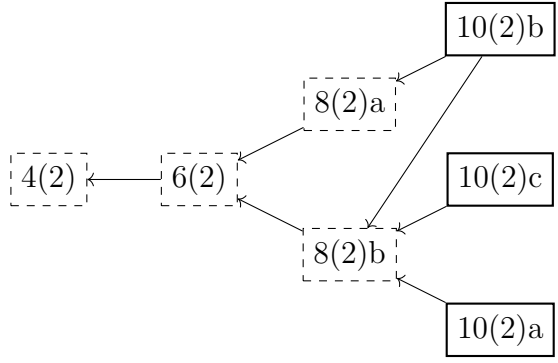

The dynamical modular curves associated to 4(2) and 6(2) are isomorphic to $\bbP^1$, and the curves associated to 8(2)a and 8(2)b are elliptic curves; see \cite{poonen:1998}. In particular, each of these four curves has infinitely many cubic points.

The curves associated to 10(2)a/b were shown to have finitely many cubic points in Table~\ref{table:no repeated elliptic factors}, while the curve associated to 10(2)c appears in Table~\ref{table:repeated elliptic factors}. Finally, any portrait $\calP$ with cycle structure $(2)$ and at least twelve vertices must contain one of 10(2)a/b/c, hence $X_1(\calP)$ has only finitely many cubic points.

\subsection{Cycle structure $(3)$}

\begin{figure}
    \begin{tikzpicture}[scale=1,
    dashednode/.style={rectangle, draw=black, dashed, minimum size=5mm},
    solidnode/.style={rectangle, draw=black, thick, minimum size=5mm}
    ]
    \tikzset{vertex/.style = {}}
    \tikzset{every loop/.style={min distance=10mm,in=45,out=-45,->}}
    \tikzset{edge/.style={decoration={markings,mark=at position 1 with %
        {\arrow[scale=1.2,>=stealth]{>}}},postaction={decorate}}}
    %
    %
    \node[dashednode] (63) at (0,0) {6(3)};
    \node[dashednode] (83) at (2,0) {8(3)};
    \node[solidnode] (103a) at (4,1) {10(3)a};
    \node[solidnode] (103b) at (4,-1) {10(3)b};
    %
    %
    \draw[->] (83) to (63);
    \draw[->] (103a) to (83);
    \draw[->] (103b) to (83);
    \end{tikzpicture}
    \caption{Portraits with cycle structure $(3)$ and at most ten vertices}
    \label{fig:(3)}
\end{figure}
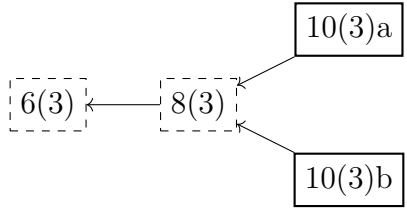

The dynamical modular curve associated to 6(3) is isomorphic to $\bbP^1$, and the curve associated to 8(3) is a curve of genus $2$ with eight rational points; see \cite{poonen:1998}. In particular, these two curves have infinitely many cubic points. (For the curve 8(3), this follows from Lemma~\ref{lem:JKS_genus2}.)

The curves associated to 10(3)a/b were shown to have finitely many cubic points in Table~\ref{table:no elliptic factors}. Finally, any portrait $\calP$ with cycle structure $(3)$ and at least twelve vertices must contain one of 10(3)a/b, hence $X_1(\calP)$ has only finitely many cubic points.

\subsection{Cycle structure $(4)$}

\begin{figure}
    \begin{tikzpicture}[scale=1,
    dashednode/.style={rectangle, draw=black, dashed, minimum size=5mm},
    solidnode/.style={rectangle, draw=black, thick, minimum size=5mm}
    ]
    \tikzset{vertex/.style = {}}
    \tikzset{every loop/.style={min distance=10mm,in=45,out=-45,->}}
    \tikzset{edge/.style={decoration={markings,mark=at position 1 with %
        {\arrow[scale=1.2,>=stealth]{>}}},postaction={decorate}}}
    %
    %
    \node[dashednode] (84) at (0,0) {8(4)};
    \node[solidnode] (104) at (2,0) {10(4)};
    %
    %
    \draw[->] (104) to (84);
    \end{tikzpicture}
    \caption{Portraits with cycle structure $(4)$ and at most ten vertices}
    \label{fig:(4)}
\end{figure}
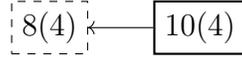

The dynamical modular curve associated to 8(4) is a genus-$2$ curve with six rational points (see \cite{morton:1998}), hence has infinitely many cubic points by Lemma~\ref{lem:JKS_genus2}.

The curve associated to 10(4) was shown to have finitely many cubic points in Table~\ref{table:no elliptic factors}. Finally, any portrait $\calP$ with cycle structure $(4)$ and at least twelve vertices must contain 10(4), hence $X_1(\calP)$ has only finitely many cubic points.

\subsection{Cycle structure $(2,1,1)$}

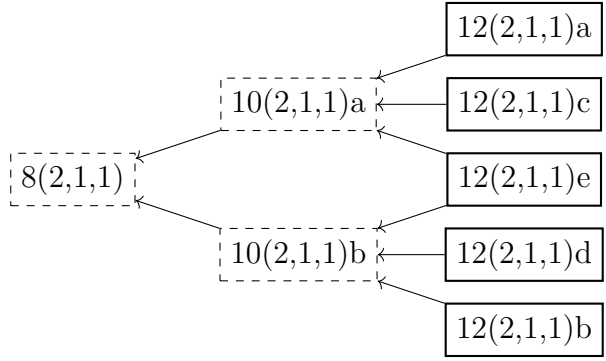
\begin{figure}
    \begin{tikzpicture}[scale=1,
    dashednode/.style={rectangle, draw=black, dashed, minimum size=5mm},
    solidnode/.style={rectangle, draw=black, thick, minimum size=5mm}
    ]
    \tikzset{vertex/.style = {}}
    \tikzset{every loop/.style={min distance=10mm,in=45,out=-45,->}}
    \tikzset{edge/.style={decoration={markings,mark=at position 1 with %
        {\arrow[scale=1.2,>=stealth]{>}}},postaction={decorate}}}
    %
    %
    \node[dashednode] (8211) at (0,0) {8(2,1,1)};
    \node[dashednode] (10211a) at (3,1) {10(2,1,1)a};
    \node[dashednode] (10211b) at (3,-1) {10(2,1,1)b};
    \node[solidnode] (12211a) at (6,2) {12(2,1,1)a};
    \node[solidnode] (12211c) at (6,1) {12(2,1,1)c};
    \node[solidnode] (12211e) at (6,0) {12(2,1,1)e};
    \node[solidnode] (12211d) at (6,-1) {12(2,1,1)d};
    \node[solidnode] (12211b) at (6,-2) {12(2,1,1)b};
    %
    %
    \draw[->] (10211a) to (8211);
    \draw[->] (10211b) to (8211);
    \draw[->] (12211a) to (10211a);
    \draw[->] (12211c) to (10211a);
    \draw[->] (12211e) to (10211a);
    \draw[->] (12211e) to (10211b);
    \draw[->] (12211d) to (10211b);
    \draw[->] (12211b) to (10211b);
    \end{tikzpicture}
    \caption{Portraits with cycle structure $(2,1,1)$ and at most twelve vertices}
    \label{fig:(2,1,1)}
\end{figure}

The dynamical modular curve associated to 8(2,1,1) is isomorphic to $\bbP^1$, and the curves associated to 10(2,1,1)a/b are elliptic curves; see \cite{poonen:1998}. In particular, these three curves have infinitely many cubic points.

The curves associated to 12(2,1,1)b/c/e were shown to have finitely many cubic points in Table~\ref{table:no repeated elliptic factors}, while the curves associated to 12(2,1,1)a/d appear in Table~\ref{table:repeated elliptic factors}. Finally, any portrait $\calP$ with cycle structure $(2,1,1)$ and at least fourteen vertices must contain one of 12(2,1,1)a/b/c/d/e, hence $X_1(\calP)$ has only finitely many cubic points.

\subsection{Cycle structure $(3,1,1)$}

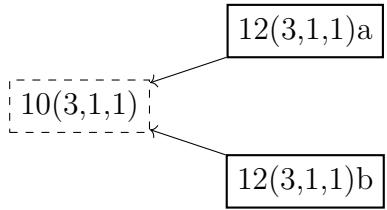
\begin{figure}
    \begin{tikzpicture}[scale=1,
    dashednode/.style={rectangle, draw=black, dashed, minimum size=5mm},
    solidnode/.style={rectangle, draw=black, thick, minimum size=5mm}
    ]
    \tikzset{vertex/.style = {}}
    \tikzset{every loop/.style={min distance=10mm,in=45,out=-45,->}}
    \tikzset{edge/.style={decoration={markings,mark=at position 1 with %
        {\arrow[scale=1.2,>=stealth]{>}}},postaction={decorate}}}
    %
    %
    \node[dashednode] (10311) at (0,0) {10(3,1,1)};
    \node[solidnode] (12311a) at (3,1) {12(3,1,1)a};
    \node[solidnode] (12311b) at (3,-1) {12(3,1,1)b};
    %
    %
    \draw[->] (12311a) to (10311);
    \draw[->] (12311b) to (10311);
    \end{tikzpicture}
    \caption{Portraits with cycle structure $(3,1,1)$ and at most twelve vertices}
    \label{fig:(3,1,1)}
\end{figure}

The dynamical modular curve associated to 10(3,1,1) is a genus-$2$ curve with six rational points (see \cite{poonen:1998}),  and thus has infinitely many cubic points by Lemma~\ref{lem:JKS_genus2}.

The curve associated to 12(3,1,1)a was shown to have finitely many cubic points in Table~\ref{table:no elliptic factors}, while the curve associated to 12(3,1,1)b appears in Table~\ref{table:no repeated elliptic factors}. Finally, any portrait $\calP$ with cycle structure $(3,1,1)$ and at least fourteen vertices must contain one of 12(3,1,1)a/b, hence $X_1(\calP)$ has only finitely many cubic points.

\subsection{Cycle structure $(3,2)$}

\begin{figure}
    \begin{tikzpicture}[scale=1,
    dashednode/.style={rectangle, draw=black, dashed, minimum size=5mm},
    solidnode/.style={rectangle, draw=black, thick, minimum size=5mm}
    ]
    \tikzset{vertex/.style = {}}
    \tikzset{every loop/.style={min distance=10mm,in=45,out=-45,->}}
    \tikzset{edge/.style={decoration={markings,mark=at position 1 with %
        {\arrow[scale=1.2,>=stealth]{>}}},postaction={decorate}}}
    %
    %
    \node[dashednode] (1032) at (0,0) {10(3,2)};
    \node[solidnode] (1232a) at (3,1) {12(3,2)a};
    \node[solidnode] (1232b) at (3,-1) {12(3,2)b};
    %
    %
    \draw[->] (1232a) to (1032);
    \draw[->] (1232b) to (1032);
    \end{tikzpicture}
    \caption{Portraits with cycle structure $(3,2)$ and at most twelve vertices}
    \label{fig:(3,2)}
\end{figure}
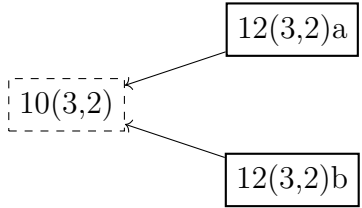

The dynamical modular curve associated to 10(3,2) is a genus-$2$ curve with six rational points, and thus has infinitely many cubic points by Lemma~\ref{lem:JKS_genus2}.

The curves associated to 12(3,2)a/b were shown to have finitely many cubic points in Table~\ref{table:no elliptic factors}. Finally, any portrait $\calP$ with cycle structure $(3,2)$ and at least fourteen vertices must contain one of 12(3,2)a/b, hence $X_1(\calP)$ has only finitely many cubic points.

\subsection{Cycle structure $(3,3)$}\label{sec:(3,3)}

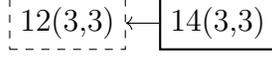
\begin{figure}
    \begin{tikzpicture}[scale=1,
    dashednode/.style={rectangle, draw=black, dashed, minimum size=5mm},
    solidnode/.style={rectangle, draw=black, thick, minimum size=5mm}
    ]
    \tikzset{vertex/.style = {}}
    \tikzset{every loop/.style={min distance=10mm,in=45,out=-45,->}}
    \tikzset{edge/.style={decoration={markings,mark=at position 1 with %
        {\arrow[scale=1.2,>=stealth]{>}}},postaction={decorate}}}
    %
    %
    \node[dashednode] (1233) at (0,0) {12(3,3)};
    \node[solidnode] (1433) at (2,0) {14(3,3)};
    %
    %
    \draw[->] (1433) to (1233);
    \end{tikzpicture}
    \caption{Portraits with cycle structure $(3,3)$ and at most fourteen vertices}
    \label{fig:(3,3)}
\end{figure}

The dynamical modular curve associated to 12(3,3) is a triple cover of $X_1(3) \cong \bbP^1$, and thus has infinitely many cubic points.

The curve associated to 14(3,3) was shown to have finitely many cubic points in Table~\ref{table:no degree 3 maps to elliptic curves}. Finally, any portrait $\calP$ with cycle structure $(3,3)$ and at least sixteen vertices must contain 14(3,3), hence $X_1(\calP)$ has only finitely many cubic points.

\begin{proof}[Proof of Theorem~\ref{thm:main_curves}]
    The theorem follows by combining the results of Sections~\ref{sec:DMCs-finitely-many} and~\ref{sec:infinitely_many}.
\end{proof}

\section{Portraits realized infinitely often over cubic fields}\label{sec:realizations}

In this section, we prove Theorem~\ref{thm:main_dynamical}. By Lemma~\ref{lem:finitely-many-PCF-1/4} and Theorem~\ref{thm:main_curves}, it suffices to prove the following:

\begin{prop}\label{prop:realizing-portraits}
Let $\calP$ be a generic quadratic portrait for which $X_1(\calP)$ has infinitely many cubic points. Then there are infinitely many $c \in \bbQ^{(3)}$ such that $G(f_c, K) \cong \calP$ for some cubic number field $K$ containing $c$.
\end{prop}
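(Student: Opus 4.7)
The strategy is to adapt the approach of~\cite{doyle/krumm:2024} from the quadratic to the cubic setting. By Theorem~\ref{thm:main_curves}, for each portrait $\calP$ in the list of $19$, the dynamical modular curve $X_1(\calP)$ has infinitely many cubic points; each such point $P$ yields a $c$-value $c(P) \in \bbQ(P) \subseteq \bbQ^{(3)}$ together with $\bbQ(P)$-rational preperiodic points for $f_{c(P)}$ forming a subgraph isomorphic to $\calP$ inside $G(f_{c(P)}, \bbQ(P))$. The aim is to show that for infinitely many such $c$, this inclusion is actually an isomorphism over some cubic $K$ containing $c$.

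The seven portraits already in $\Gamma(1)$ are handled directly. By~\cite{faber:2015}, there are infinitely many $c \in \bbQ$ with $G(f_c, \bbQ) \cong \calP$. For each such $c$, $f_c$ has only finitely many preperiodic points over $\bar\bbQ$, so only finitely many cubic fields $K/\bbQ$ can contain any preperiodic point of $f_c$ outside $\bbQ$; for cofinitely many cubic $K$, $G(f_c, K) = G(f_c, \bbQ) \cong \calP$.

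For each of the twelve portraits in $\Gamma(3) \smallsetminus \Gamma(1)$, the plan is to apply Lemma~\ref{lem:HIT} to $X = X_1(\calP)$ with $K = \bbQ$, $\psi = c$, and $\varphi \colon X_1(\calP) \to \bbP^1$ a degree-$3$ morphism. Such a $\varphi$ exists in each case: $X_1(\calP)$ is either rational, elliptic, of genus $2$ (every genus-$2$ curve carries a $g^1_3$), or one of the higher-genus curves $X_1(\mathrm{10(1,1)a})$ and $X_1(\mathrm{12(3,3)})$, both shown in Section~\ref{sec:infinitely_many} to admit a degree-$3$ map to $\bbP^1$. As the auxiliary covers $\pi_i \colon X_1(\calP'_i) \to X_1(\calP)$ we take those furnished by Proposition~\ref{prop:functoriality} for the finite collection of minimal generic quadratic extensions $\calP'_i \supsetneq \calP$ obtained either by attaching a pair of preimages to some leaf of $\calP$ or by adjoining a new periodic cycle of length at most $4$; each $\pi_i$ has degree at least $2$ by Proposition~\ref{prop:functoriality}. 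Lemma~\ref{lem:HIT} will then produce infinitely many cubic $P$ with $c(P)$ integral at a preselected rational prime $p$ and with $P$ admitting no $\bbQ(P)$-rational lift to any $X_1(\calP'_i)$. A small refinement of the Hilbert-irreducibility step inside the proof of Lemma~\ref{lem:HIT}---for instance, replacing $\psi = c$ by $\psi = (c - c_0)/p$ for a chosen $c_0 \in \bbZ$---further arranges that $c(P) \equiv c_0 \pmod{\frakq}$ for the primes $\frakq \mid p$ of $\bbQ(P)$.

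Extra cycles are then ruled out by a local argument. One chooses, for each $\calP$, a rational prime $p > 4$ and $c_0 \in \bbZ$ such that $f_{c_0}$ has good reduction at $p$ and $f_{\bar c_0}$ has precisely the cycle multiset of $\calP$ over $\overline{\bbF_p}$. By standard reduction theory of periodic points~\cite[Theorem~2.21]{silverman:2007}, every periodic point of $f_{c(P)}$ of period coprime to $p$ reduces injectively to a periodic point of the same period for $f_{\bar c_0}$. Since all cycles in $\calP$ have length at most $4 < p$, the cycle multiset of $G(f_{c(P)}, \bbQ(P))$ is contained in that of $\calP$. Any strict generic quadratic extension of $\calP$ must either gain a new cycle---impossible by the local argument---or add preimages to some leaf---impossible by the Lemma~\ref{lem:HIT} condition. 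This forces $G(f_{c(P)}, \bbQ(P)) \cong \calP$, completing the argument for these twelve portraits. The main obstacle will be the portrait-by-portrait verification of the existence of a suitable pair $(p, c_0)$ and of the finite list of minimal extensions $\calP'_i$; both tasks are routine and can be carried out by direct computation in Magma for each of the twelve portraits.
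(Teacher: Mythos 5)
Your overall skeleton---Hilbert irreducibility via Lemma~\ref{lem:HIT} applied to a degree-$3$ map $\varphi$ together with $\psi = c$, auxiliary covers $X_1(\calP_i') \to X_1(\calP)$ over the minimal extensions of $\calP$ to exclude added preimages, and a local good-reduction argument to exclude new cycles---is the same as the paper's. But the local step is incorrect, and the error propagates into your choice of the finite list of auxiliary covers. You assert that a periodic point of $f_{c(P)}$ of period coprime to $p$ reduces to a periodic point \emph{of the same period} for the reduced map. That is not what \cite[Theorem 2.21]{silverman:2007} says: if $P$ has exact period $n$ and its reduction has exact period $m$, then $n \in \{m,\ mr,\ mrp^e\}$, where $r$ is the multiplicative order of the multiplier of the reduced cycle in the residue field. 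So a $K$-rational cycle of length, say, $7$ or $12$ can perfectly well reduce to a fixed point or a $2$-cycle modulo $\frakq$, and your hypothesis that $f_{\bar c_0}$ has only the cycles of $\calP$ over the residue field does not exclude this. (As written the hypothesis is also unachievable: over $\overline{\bbF_p}$ every point is preperiodic for $f_{\bar c_0}$, so the cycle multiset there is always infinite; at best you could control cycles over $\bbF_{p^k}$ for $k \le 3$.) Because of this, restricting your list of minimal extensions $\calP_i'$ to those obtained by adjoining a cycle of length at most $4$ leaves a genuine gap: nothing in your argument rules out $G(f_{c(P)}, \bbQ(P))$ acquiring a new cycle of length between $5$ and the actual good-reduction bound.

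The paper's proof repairs exactly this point: it sets $B = \max\{B(p), B(p^2), B(p^3)\}$ using Proposition~\ref{prop:good-red}, which bounds the period of \emph{any} $K$-rational periodic point of $x^2+c$ with $c$ integral at $\frakq$, uniformly in $c$, and then lets $\Sigma(\calP,B)$ consist of all minimal generic quadratic extensions of $\calP$ with no cycle of length exceeding $B$. Integrality of $c(P)$ at the primes above $p$ (condition (iii) of Lemma~\ref{lem:HIT}) caps all possible new cycle lengths at $B$, and the fiber-irreducibility condition for the finitely many $\pi_i$ indexed by $\Sigma(\calP,B)$ excludes every remaining enlargement. This requires no choice of $c_0$, no congruence condition on $c(P)$, and no computation of the cycle structure of any particular reduced map, so your proposed modification $\psi = (c - c_0)/p$ is unnecessary once the argument is set up this way. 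Your separate treatment of the seven portraits in $\Gamma(1)$ via \cite{faber:2015} is correct but also unnecessary, since the uniform argument covers all nineteen portraits. Finally, if you retain your route, you should justify that the degree-$3$ map on each genus-$2$ curve is defined over $\bbQ$ (this uses the supply of rational points, as in Lemma~\ref{lem:JKS_genus2}), not merely that a $g^1_3$ exists geometrically.
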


The proof of the proposition will rely on the Hilbert irreducibility theorem (via Lemma~\ref{lem:HIT}) as well as the following statement about periodic points for maps with good reduction, for which we refer to \cite[Theorem 2.21]{silverman:2007}.

\begin{prop}\label{prop:good-red}
Let $K$ be a number field, and let $\frakp$ be a maximal ideal in $\calO_K$ with residue field of cardinality $q$. Then there exists a bound $B = B(q)$ such that for all $c \in K$ with $\ord_\frakp(c) \ge 0$, every $K$-rational periodic point for $x^2 + c$ has period less than $B$.
\end{prop}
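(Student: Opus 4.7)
The plan is to use good reduction at $\frakp$ to reduce the question to controlling cycle lengths of a degree-$2$ polynomial acting on the finite set $\bbP^1(\bbF_q)$, and then invoke Silverman's precise analysis of how periods lift from the residue field. First I would observe that the hypothesis $\ord_\frakp(c) \ge 0$ means the polynomial $f_c(x) = x^2 + c$ has $\frakp$-integral coefficients, so its reduction $\bar{f}_c$ is a well-defined degree-$2$ endomorphism of $\bbP^1$ over $\bbF_q$, and reduction commutes with iteration. For any $K$-rational periodic point $P$ of exact period $n$, the identity $f_c^n(P) = P$ exhibits $P$ as a root of a monic polynomial with $\frakp$-integral coefficients, so $P$ is itself $\frakp$-integral and reduces to a point $\bar{P} \in \bbA^1(\bbF_q)$ that is $\bar{f}_c$-periodic of some exact period $m \mid n$, with the trivial bound $m \le q$.

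The remaining task is to bound $n/m$ in terms of $q$ alone. Consider the multiplier $\lambda := (f_c^m)'(P) \in K$, whose reduction $\bar\lambda = (\bar{f}_c^m)'(\bar P) \in \bbF_q$ has some multiplicative order $r$ dividing $|\bbF_q^\times| = q-1$ (with $r = \infty$ if $\bar\lambda = 0$). The main input, \cite[Theorem 2.21]{silverman:2007}, then forces a trichotomy: writing $p$ for the residue characteristic, the exact period $n$ is either $m$, or $mr$, or $mrp^e$ for some $e \ge 1$. The proof of this trichotomy in \cite{silverman:2007} proceeds by a careful analysis of the action of $f_c^m$ on the $\frakp$-adic residue disk around $P$: after passing to $f_c^{mr}$ the map is tangent to the identity on that disk, and any further nontrivial return time must be a power of $p$ determined by the valuation of $\lambda^r - 1$. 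In the first two cases of the trichotomy, one immediately has $n \le mr \le q(q-1)$, uniformly in $c$.

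The main obstacle is the wild case $n = mrp^e$ with $e \ge 1$, where the exponent $e$ must be controlled purely in terms of $q$. This is exactly the content of the more refined part of \cite[Theorem 2.21]{silverman:2007} (compatible with Pezda's uniform bound in the polynomial setting), which bounds $p^e$ by an explicit function of $q$ and the degree of the map; the argument iterates the residue-disk analysis above, using that each additional $p$-fold iterate of $f_c^{mr}$ rigidly increases the $\frakp$-adic valuation of $f_c^{kmr}(P) - P$, so once this valuation exceeds a threshold depending only on the ramification of $\frakp$ over $p$ and on $q$, no new return is possible. Specializing to $\deg f_c = 2$ combines the three cases into a single function $B(q)$ with the required property.
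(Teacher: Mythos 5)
Your proposal takes essentially the same route as the paper: the paper offers no independent argument for this proposition and simply defers to \cite[Theorem 2.21]{silverman:2007}, which is exactly the key input you invoke, and your surrounding steps (integrality and reduction of the periodic point, $m \le q$, $r \mid q-1$, and the trichotomy $n \in \{m,\, mr,\, mrp^e\}$ with a bounded wild exponent) are a faithful sketch of the content and proof of that cited result. The one wrinkle you brush against---that the bound on $p^e$ in the wild case genuinely depends on the ramification of $\frakp$ over $p$ rather than on $q$ alone---is suppressed in the paper's statement just as in your sketch, and is harmless for the paper's application, where $K$ is cubic and the ramification index is at most $3$.
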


\begin{proof}[Proof of Proposition~\ref{prop:realizing-portraits}]
Let $\calP$ be a portrait for which $X_1(\calP)$ has infinitely many cubic points. For all such $\calP$, there is a degree-$3$ map $\varphi : X_1(\calP) \to \bbP^1$ defined over $\bbQ$, so we may apply Lemma~\ref{lem:HIT} with $X = X_1(\calP)$, $d = 3$, and the maps $\varphi$ and $\psi = c$. Let $S$ be the finite set of (rational) primes guaranteed by Lemma~\ref{lem:HIT}, and let $p$ be any rational prime not in $S$.

Set $B = \max\{B(p), B(p^2), B(p^3)\}$, with $B(\cdot)$ as in Proposition~\ref{prop:good-red}.
Define $\Sigma(\calP, B)$ to be the set of generic quadratic portraits $\calP'$ such that
    \begin{itemize}
        \item $\calP'$ properly contains $\calP$,
        \item there are no intermediate subportraits $\calP \subsetneq \calP'' \subsetneq \calP'$, and
        \item $\calP'$ has no cycles of length greater than $B$.
    \end{itemize}
Note that $\Sigma(\calP, B)$ is finite, as each $\calP' \in \Sigma(\calP, B)$ can be obtained from $\calP$ by either adding a new cycle of length at most $B$ or by adding two preimages to a vertex in $\calP$. Let $\calP_1,\ldots,\calP_n$ be an enumeration of the set $\Sigma(\calP, B)$.

By Proposition~\ref{prop:functoriality}, for each $i = 1,\ldots,n$, there is a morphism $\pi_i : X_1(\calP_i) \to X_1(\calP)$ of degree at least $2$. Thus, by Lemmas~\ref{lem:finitely-many-PCF-1/4} and~\ref{lem:HIT}, there are infinitely many cubic points $P$ on $X_1(\calP)$ such that
    \begin{itemize}
        \item the preperiodic portrait $G(f_{c(P)}, \bbQ(P))$ is generic quadratic,
        \item for all $i = 1,\ldots,n$, the points in the fiber $\pi_i^{-1}(P)$ are defined over proper extensions of $\bbQ(P)$, and
        \item $\ord_\frakq(c(P)) \ge 0$ for all primes $\frakq$ in $\calO_{\bbQ(P)}$ lying over $p$.
    \end{itemize}
    
To complete the proof, it now suffices to show that if $P$ is one of these infinitely many cubic points, then $G(f_{c(P)}, \bbQ(P))$ is isomorphic to $\calP$. Thus, let $P$ be such a point, and let $K = \bbQ(P)$. Since $P$ is a $K$-rational point on $X_1(\calP)$, and since $G(f_{c(P)},K)$ is generic quadratic, the portrait $G(f_{c(P)}, K)$ has a subgraph isomorphic to $\calP$; we claim that $G(f_{c(P)}, K)$ can be no larger than $\calP$.

Let $\frakq$ be any prime ideal of $\calO_K$ lying over $p$. Since the residue field of $\frakq$ has cardinality $p^k$ for some $1 \le k \le 3$, and since $\ord_\frakq(c(P)) \ge 0$, it follows from Proposition~\ref{prop:good-red} that $f_{c(P)}$ has no $K$-rational periodic points of period greater than $B$. Thus, since we have assumed that $G(f_{c(P)}, K)$ is a generic quadratic portrait, it must be that either $G(f_{c(P)}, K) \cong \calP$ or $G(f_{c(P)}, K)$ contains a portrait isomorphic to $\calP_i$ for some $1 \le i \le n$. But the latter is impossible, as this would imply that $\pi_i^{-1}(P)$ contained a $K$-rational point, which contradicts our assumption that $\pi_i^{-1}(P)$ is irreducible over $K$. Therefore, $G(f_{c(P)}, K) \cong \calP$.
\end{proof}

We pose the following more general question:

\begin{ques}\label{ques:degree-d-points}
Are the following statements equivalent for all integers $d \ge 1$ and all generic quadratic portraits $\calP$?
    \begin{enumerate}
        \item There are infinitely many degree-$d$ points on $X_1(\calP)$.
        \item There are infinitely many $c \in \bbQ^{(d)}$ for which there is a degree-$d$ number field $K$ with $G(f_c,K) \cong \calP$.
    \end{enumerate}
\end{ques}

The answer to Question~\ref{ques:degree-d-points} is ``yes" for $d$ equal to $1$, $2$, and $3$ by \cite[Theorem 4.1]{faber:2015}, \cite[Theorem 1.5]{doyle/krumm:2024}, and Proposition~\ref{prop:realizing-portraits}, respectively. Note that the implication (b) $\implies$ (a) is straightforward, so it remains to show that (a) $\implies$ (b) (as in Proposition~\ref{prop:realizing-portraits}).
We also note that the proof of Proposition~\ref{prop:realizing-portraits} can be used to show that the answer to Question~\ref{ques:degree-d-points} is ``yes" when the curve $X_1(\calP)$ admits a degree-$d$ morphism to $\bbP^1$. 

\section{Rationality of the $c$-coordinate for cubic points}\label{sec:FOD}
Let us say that a cubic point $P \in X_1(\calP)$ is {\bf strictly cubic} if $c(P) \notin \bbQ$ and {\bf weakly cubic} otherwise. We aim to refine Theorems~\ref{thm:main_dynamical} and~\ref{thm:main_curves} by considering whether a curve $X_1(\calP)$ can have infinitely many strictly (resp., weakly) cubic points.

For the portraits that have no cycles of length larger than $2$, the map $c : X_1(\calP) \to \bbP^1$ is a composition of degree-$2$ maps. Thus, if $c \in \bbQ$ and $K$ is a number field for which $\PrePer(f_c,K)$ contains $\calP$, then the field of definition of the $K$-rational preperiodic points has degree a power of $2$. In particular, if $K$ is a cubic field and $c \in K$ is such that $G(f_c,K)$ contains $\calP$, then either $c \notin \bbQ$ or $c \in \bbQ$ and $G(f_c,\bbQ)$ also contains $\calP$, hence $c$ corresponds to a \textit{rational} point on $X_1(\calP)$. In other words, $X_1(\calP)$ has no weakly cubic points and therefore infinitely many strictly cubic points. The portraits in this category are
\[\rm
    \emptyset,\  4(1,1),\  4(2),\  6(1,1),\  6(2),\  8(1,1)a/b,\  8(2)a/b,\  8(2,1,1),\  10(1,1)a,\  10(2,1,1)a/b.
\]
We handle the remaining portraits from Theorem~\ref{thm:main_curves} individually.\\

\noindent {\bf 6(3).} The curve $X_1({\rm 6(3)})$ is isomorphic to $\bbP^1$, hence has infinitely many strictly cubic points by Hilbert irreducibility. Moreover, Morton \cite[Theorem 2(b)]{morton:1992} showed that there are infinitely many $c \in \bbQ$ for which there is a cubic field $K$ containing period-$3$ points for $f_c$ \emph{not} contained in $\bbQ$, so $X_1({\rm 6(3)})$ also has infinitely many weakly cubic points.\\

\noindent {\bf 8(3).} 
We claim that there are only finitely many weakly cubic points on $X_1({\rm 8(3)})$.
Indeed, suppose $c_0 \in \bbQ$ is the image of a cubic point under the map $c : X_1({\rm 8(3)}) \to \bbP^1$.
Let $K$ be the field of definition of any cubic preimage of $c_0$ under $c$. Ignoring at most finitely many such choices of $c_0$, we may make the following assumptions about the map $f_{c_0}(x) = x^2 + c_0$:
    \begin{itemize}
    \item The preperiodic portrait $G(f_{c_0}, K)$ contains a subgraph isomorphic to 8(3). This is equivalent to saying that the $K$-rational preimage of $c_0$ under $c$ lies in the dense open subset $U_1({\rm 8(3)}) \subset X_1({\rm 8(3)})$. Let $\pm \alpha_1,\pm\alpha_2,\pm\alpha_3,\pm\beta$ be $K$-rational preperiodic points arranged as in Figure~\ref{fig:8(3)argument}.
    \item The points $\pm \beta$ are in $K \smallsetminus\bbQ$. Indeed, if $\pm \beta \in \bbQ$, then each of the $\alpha_i$ would be in $\bbQ$, since for each $i$ we can write $\alpha_i = f_{c_0}^k(\beta)$ for some $k\ge1$. But this would yield a rational point on $X_1({\rm 8(3)})$, of which there are finitely many. 
    \item The map $f_{c_0}$ has only three $K$-rational points of period $3$, namely $\alpha_1$, $\alpha_2$, and $\alpha_3$. Indeed, if there were additional points of period $3$, then $c_0$ would actually correspond to a $K$-rational point on $X_1({\rm 14(3,3)})$, which has only finitely many cubic points; see Section~\ref{sec:(3,3)}.
    \end{itemize}

\begin{figure}
    \centering
	\begin{tikzpicture}[scale=1.3]
	\tikzset{vertex/.style = {}}
	\tikzset{every loop/.style={min distance=10mm,in=45,out=-45,->}}
	\tikzset{edge/.style={decoration={markings,mark=at position 1 with %
	    {\arrow[scale=1.2,>=stealth]{>}}},postaction={decorate}}}
	    
	\node[inner sep=.4mm] (03a) at (0,0) {$\alpha_1$};
	\node[inner sep=.4mm] (03b) at (-.866,.5) {$\alpha_2$};
	\node[inner sep=.4mm] (03c) at (-.866,-.5) {$\alpha_3$};
	\node[inner sep=.4mm] (13a) at (1,0) {$-\alpha_3$};
	\node[inner sep=.4mm] (13b) at (-1.366,1.366) {$-\alpha_1$};
	\node[inner sep=.4mm] (13c) at (-1.366,-1.366) {$-\alpha_2$};
	\node[inner sep=.4mm] (23a) at (1.866,.5) {$\beta$};
	\node[inner sep=.4mm] (23b) at (1.866,-.5) {$-\beta$};
	
	\draw[->] (03a) to [bend right=30] (03b);
	\draw[->] (03b) to [bend right=30] (03c);
	\draw[->] (03c) to [bend right=30] (03a);
	\draw[->] (13a) to (03a);
	\draw[->] (13b) to (03b);
	\draw[->] (13c) to (03c);
	\draw[->] (23a) to (13a);
	\draw[->] (23b) to (13a);
	\end{tikzpicture}
    \caption{Preperiodic points arranged according to portrait 8(3)}
    \label{fig:8(3)argument}
\end{figure}
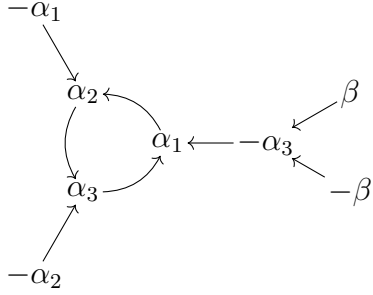

Now, note that none of the $\alpha_i$ may be rational: Indeed, if even one of the $\alpha_i$ lies in $\bbQ$, then all three must be rational (since $\alpha_i = \alpha_{i-1}^2 + c_0$, with indices taken modulo $3$), but then $\pm \beta$ are the roots of $x^2 + c_0 = -\alpha_3$, hence would be defined over a \textit{quadratic} extension of $\bbQ$.

Since $\alpha_1$, $\alpha_2$, and $\alpha_3$ are the only $K$-rational points of period $3$ for $f_{c_0}$, we claim that $K/\bbQ$ is Galois and that the $\alpha_i$ must be $\Gal(K/\bbQ)$-conjugates: Indeed, the third dynatomic polynomial $\Phi_3(c_0,x)$ has degree $6$, and each $\alpha_i$ is a root of an irreducible cubic factor. But we assumed $f_{c_0}$ has only three $K$-rational points of period $3$, so they must be roots of the same irreducible cubic.

Finally, since $\alpha_3$ and $\alpha_1$ are $\Gal(K/\bbQ)$-conjugates, and since the square roots (namely $\pm \beta$) of $-c_0 - \alpha_3$ lie in $K$, it must be that the square roots of $-c_0 - \alpha_2$ must also lie in $K$. But this would then yield a $K$-rational point on the curve $X_1({\rm 10(3)b})$, which has only finitely many cubic points.\\

\noindent {\bf 8(4).} 
We claim that there are only finitely many weakly cubic points on $X_1({\rm 8(4)})$. One could give an argument similar to the one used for 8(3), though it is a bit more involved; instead, we provide a different (and shorter) proof in this case that relies more on the geometry of $X_1({\rm 8(4)})$ than on dynamical properties.

Suppose for the sake of contradiction that $X_1({\rm 8(4)})$ has infinitely many weakly cubic points. The curve $X_1({\rm 8(4)})$ is isomorphic to the \textit{classical} modular curve $X_1(16)$ (see \cite[p. 93]{morton:1998}), so its Jacobian has rank $0$ over $\bbQ$. Thus, there are finitely many degree-$3$ morphisms $X_1({\rm 8(3)}) \to \bbP^1$ (up to automorphisms on $\bbP^1$), and hence by the pigeonhole principle infinitely many weakly cubic points must be linearly equivalent. That is, there must exist some degree-$3$ morphism $\varphi: X_1({\rm 8(3)}) \to \bbP^1$ such that for infinitely many $y \in \bbP^1(\mathbb{Q})$, the fiber $\varphi^{-1}(y)$ is (the Galois orbit of) a weakly cubic point. By definition, if $z \in X_1({\rm 8(3)})$ is a weakly cubic point, then $c(z)$ is rational point, so that $c(\varphi^{-1}(y))$ is a single geometric point. By Lemma~\ref{lem:map must factor}, we see that the $c$ map must factor through $\varphi$; however, computation of the Galois group of (the Galois closure of) the map $c: X_1({\rm 8(3)}) \to \bbP^1)$ shows that the map $c$ does not factor through any degree-$3$ map.\\

\noindent {\bf 10(3,1,1) and 10(3,2).} Let $\calP$ be either of these portraits. Then $X = X_1(\calP)$ has genus $2$, and the Jacobian of $X$ has rank zero over $\bbQ$. The curve $X$ admits a cubic map $\varphi : X \to \bbP^1$ which is disjoint from $c : X \to \bbP^1$, so the cubic preimages of rational points under $\varphi$ will generally satisfy $c \notin \bbQ$ by Hilbert irreducibility. 
In other words, we get infinitely many strictly cubic points.

On the other hand, the map $c : X \to \bbP^1$ also factors through a cubic map defined over $\bbQ$. More precisely, if we let $\sigma\in\Aut(X)$ be the order-$3$ automorphism of $X$ corresponding to a cyclic permutation on the $3$-cycle in $\calP$, then the quotient of $X$ by $\sigma$ is isomorphic to $\bbP^1$, hence the map $c : X \to \bbP^1$ factors through a degree-$3$ map $\varphi : X \to \bbP^1$. Therefore, $X$ has infinitely many weakly cubic points.\\

\noindent {\bf 12(3,3).} We claim that the genus-$4$ curve $X = X_1({\rm 12(3,3)})$ has finitely many strictly cubic points. More precisely, if $K$ is a cubic field, $c \in K$, and $G(f_c,K)$ contains 12(3,3), then with finitely many exceptions, actually $c \in \bbQ$ and $G(f_c,\bbQ)$ contains 6(3).

We first show that the Jacobian $J(X)$ has rank $0$ over $\bbQ$. What follows comes mainly from \cite[\textsection 4]{morton:1992} (the function field of $X$ is the field $N$ from \cite{morton:1992}) plus some Magma computations.

The curve $X$ covers two distinct curves of genus $2$ ($N_-$ and $N_+$ in \cite{morton:1992}); more precisely, if we think of $X$ as given by the model \eqref{eq:X1(12(3,3))}, the genus-$2$ curves are the quotients of $X$ by the automorphisms
    \[
        (c,x,y)\mapsto(c,f_c(x),f_c(y)) \quad\text{and}\quad 
        (c,x,y)\mapsto(c,f_c^2(x),f_c(y)).
    \]
The Jacobians of both curves have rank $0$, so it suffices to show that $J(X)$ is, up to isogeny, a product of the Jacobians of the two genus-$2$ curves. Indeed, the first genus-$2$ quotient has an elliptic subcover, hence has an elliptic isogeny factor, but a Magma computation shows that the second genus-$2$ quotient is simple. Thus, $J(X)$ has rank $0$, so all but finitely many cubic points on $X$ are pullbacks of rational points on $\bbP^1$ by a degree-$3$ map.

We now show that there are only two degree-$3$ maps $X \to \bbP^1$, up to automorphisms of $\bbP^1$. Indeed, there are two natural projection maps
    \[
        X = X_1({\rm 12(3,3)}) \to X_1({\rm 6(3)}) \cong \bbP^1,
    \]
each of which has degree $3$; in particular, $X$ cannot be hyperelliptic (by Theorem~\ref{thm:CS}, for example). A non-hyperelliptic curve admits at most two distinct cubic maps to $\bbP^1$, where by ``distinct" we mean that the pullbacks of $\mathcal{O}_{\mathbb{P}^1}(1)$ are distinct, or equivalently in this case, that one cannot be obtained from the other by postcomposition by an element of $\Aut(\bbP^1)$.
These two projection maps are distinct, hence they are the only cubic maps to $\bbP^1$.

It now follows that all but finitely many cubic points on $X$ are preimages of rational points on $X_1({\rm 6(3)})$; in other words, with finitely many exceptions, if $K$ is a cubic field, $c \in K$, and $G(f_c,K)$ contains 12(3,3), then $c \in \bbQ$ and $G(f_c,\bbQ)$ contains 6(3,3). 

\begin{proof}[Proof of Theorem~\ref{thm:weakly-cubic}]
Let $\calP \in \Gamma(3)$, and suppose that there are infinitely many $c \in \bbQ$ such that $G(f_c,\bbQ) \subsetneq G(f_c,K) \cong \calP$ for some cubic number field $K$. Then there are infinitely many weakly cubic points on $X_1(\calP)$, thus, by the results of this section, $\calP$ must be isomorphic to one of
        \begin{center}
        \rm
        6(3),\quad 10(3,1,1),\quad  10(3,2),\quad and\quad  12(3,3).
        \end{center}
Now let $\calP$ be one of these four portraits. In each case, the map $c : X_1(\calP)\to\bbP^1$ factors as
    \[
    X_1(\calP) \overset{\varphi}{\longto} \bbP^1 \overset{\psi}{\longto} \bbP^1
    \]
with $\deg\varphi = 3$. Choose $x_0 \in \bbP^1(\bbQ)$ with $\psi(x_0) \in \bbQ^\times$ and a prime $p$ of good reduction for $\psi$ such that $\ord_p(\psi(x_0)) \ge 0$, and let $[x_0]_p \subset \bbP^1(\bbQ)$ be the mod-$p$ residue class of $x_0$. Then an argument similar to that of Proposition~\ref{prop:realizing-portraits} (combining Hilbert irreducibility with Proposition~\ref{prop:good-red}) shows that there are infinitely many $t \in \psi([x_0]_p)$ for which there is a cubic field $K$ (specifically, the field of definition of any geometric point in the fiber $c^{-1}(t)$) with $G(f_t,K) \cong \calP$. On the other hand, since the points in the fiber over $t$ are cubic, it follows that $G(f_t,\bbQ) \not\cong \calP$.
\end{proof}

\begin{proof}[Proof of Theorem~\ref{thm:strictly-cubic}]
Let $\calP \in \Gamma(3)$, and suppose that there are infinitely many $c \in \bbQ^{(3)} \smallsetminus \bbQ$ such that $G(f_c,\bbQ(c)) \cong \calP$. Then $X_1(\calP)$ has infinitely many strictly cubic points, and therefore $\calP \not\cong \rm 12(3,3)$.

Now suppose that $\calP \in \Gamma(3)\smallsetminus\{\rm12(3,3)\}$. If $\calP$ is not one of 6(3), 10(3,1,1), or 10(3,2), then, as explained at the beginning of this section, {\it every} cubic point on $X_1(\calP)$ is strictly cubic, so the infinitely many $c\in \bbQ^{(3)}$ guaranteed by Theorem~\ref{thm:main_dynamical} satisfy $c\notin \bbQ$; in other words, there are infinitely many $c \in \bbQ^{(3)}\smallsetminus \bbQ$ such that $G(f_c,\bbQ(c)) \cong \calP$. 

Thus, we let $\calP$ be one of the remaining portraits, namely 6(3), 10(3,1,1), and 10(3,2). Then there is a degree-$3$ morphism $\theta:X_1(\calP)\to\bbP^1$ which is disjoint from the map $c : X_1(\calP)\to\bbP^1$; this was explained above for 10(3,1,1) and 10(3,2), and for 6(3) we simply use the fact that $X_1(\rm6(3)) \cong \bbP^1$. Once again we choose an appropriate prime $p$ and mod-$p$ residue class $[y_0]_p\subset\bbP^1(\bbQ)$ and verify, as in the proof of Proposition~\ref{prop:realizing-portraits} that there are infinitely many cubic $t \in c\big(\theta^{-1}([y_0]_p)\big)$ for which $G(f_t,\bbQ(t)) \cong \calP$.
\end{proof}

\appendix

\section{List of portraits}\label{app:portraits}

\subsection{Graphs appearing in the main classification statements}

We provide here all of the portraits that appear in Theorems~\ref{thm:main_dynamical} and~\ref{thm:main_curves}. For each portrait $\calP$, we also include (when possible) a reference for a model of the curve $X_1(\calP)$.

\newpage
\begin{longtable}{|M{.465\textwidth}|M{.465\textwidth}|}
\hline
	\multicolumn{1}{|l|}{{\bf 4(1,1)} \hfill \cite[Theorem 1]{walde/russo:1994}} &
	\multicolumn{1}{l|}{{\bf 4(2)} \hfill \cite[Theorem 1]{walde/russo:1994}}\\
	\begin{tikzpicture}[scale=1]
	\tikzset{vertex/.style = {}}
	\tikzset{every loop/.style={min distance=10mm,in=45,out=-45,->}}
	\tikzset{edge/.style={decoration={markings,mark=at position 1 with %
	    {\arrow[scale=1.2,>=stealth]{>}}},postaction={decorate}}}
	    
	\node[inner sep=.4mm] (01a) at (1,0) {$\bullet$};
	\node[inner sep=.4mm] (11a) at (0,0) {$\bullet$};
	\node[inner sep=.4mm] (01b) at (3.5,0) {$\bullet$};
	\node[inner sep=.4mm] (11b) at (2.5,0) {$\bullet$};
	
	\draw[->] (01a) to [out=-45, in=45, looseness=10] (01a);
	\draw[->] (01b) to [out=-45, in=45, looseness=10] (01b);
	\draw[->] (11a) to (01a);
	\draw[->] (11b) to (01b);
	\end{tikzpicture}&
	
	\begin{tikzpicture}[scale=1]
	\tikzset{vertex/.style = {}}
	\tikzset{every loop/.style={min distance=10mm,in=45,out=-45,->}}
	\tikzset{edge/.style={decoration={markings,mark=at position 1 with %
	    {\arrow[scale=1.2,>=stealth]{>}}},postaction={decorate}}}
	    
	\node[inner sep=.4mm] (02a) at (1,0) {$\bullet$};
	\node[inner sep=.4mm] (02b) at (2,0) {$\bullet$};
	\node[inner sep=.4mm] (12a) at (0,0) {$\bullet$};
	\node[inner sep=.4mm] (12b) at (3,0) {$\bullet$};
	
	\draw[->] (02a) to [bend right=60] (02b);
	\draw[->] (02b) to [bend right=60] (02a);
	\draw[->] (12a) to (02a);
	\draw[->] (12b) to (02b);
	\end{tikzpicture}\\

	\hline

	\multicolumn{1}{|l|}{{\bf 6(1,1)} \hfill \cite[Theorem 3]{poonen:1998}}&
	\multicolumn{1}{l|}{{\bf 6(2)}  \hfill \cite[Theorem 3]{poonen:1998}}\\
	
	\begin{tikzpicture}[scale=1]
	\tikzset{vertex/.style = {}}
	\tikzset{every loop/.style={min distance=10mm,in=45,out=-45,->}}
	\tikzset{edge/.style={decoration={markings,mark=at position 1 with %
	    {\arrow[scale=1.2,>=stealth]{>}}},postaction={decorate}}}
	    
	\node[inner sep=.4mm] (01a) at (1,0) {$\bullet$};
	\node[inner sep=.4mm] (11a) at (0,0) {$\bullet$};
	\node[inner sep=.4mm] (12a) at (-.866,.5) {$\bullet$};
	\node[inner sep=.4mm] (12b) at (-.866,-.5) {$\bullet$};
	\node[inner sep=.4mm] (01b) at (3.5,0) {$\bullet$};
	\node[inner sep=.4mm] (11b) at (2.5,0) {$\bullet$};
	
	\draw[->] (01a) to [out=-45, in=45, looseness=10] (01a);
	\draw[->] (01b) to [out=-45, in=45, looseness=10] (01b);
	\draw[->] (11a) to (01a);
	\draw[->] (11b) to (01b);
	\draw[->] (12a) to (11a);
	\draw[->] (12b) to (11a);
	\end{tikzpicture}&

	\begin{tikzpicture}[scale=1]
	\tikzset{vertex/.style = {}}
	\tikzset{every loop/.style={min distance=10mm,in=45,out=-45,->}}
	\tikzset{edge/.style={decoration={markings,mark=at position 1 with %
	    {\arrow[scale=1.2,>=stealth]{>}}},postaction={decorate}}}
	    
	\node[inner sep=.4mm] (02a) at (1,0) {$\bullet$};
	\node[inner sep=.4mm] (02b) at (2,0) {$\bullet$};
	\node[inner sep=.4mm] (12a) at (0,0) {$\bullet$};
	\node[inner sep=.4mm] (12b) at (3,0) {$\bullet$};
	\node[inner sep=.4mm] (22a) at (-.866,.5) {$\bullet$};
	\node[inner sep=.4mm] (22b) at (-.866,-.5) {$\bullet$};
	
	\draw[->] (02a) to [bend right=60] (02b);
	\draw[->] (02b) to [bend right=60] (02a);
	\draw[->] (12a) to (02a);
	\draw[->] (12b) to (02b);
	\draw[->] (22a) to (12a);
	\draw[->] (22b) to (12a);
	\end{tikzpicture}\\ \hline

	\multicolumn{1}{|l|}{{\bf 6(3)} \hfill \cite[Theorem 3]{walde/russo:1994}}&
	\multicolumn{1}{l|}{{\bf 8(1,1)a}  \hfill \cite[p. 19]{poonen:1998}}\\
	
	\begin{tikzpicture}[scale=1]
	\tikzset{vertex/.style = {}}
	\tikzset{every loop/.style={min distance=10mm,in=45,out=-45,->}}
	\tikzset{edge/.style={decoration={markings,mark=at position 1 with %
	    {\arrow[scale=1.2,>=stealth]{>}}},postaction={decorate}}}
	    
	\node[inner sep=.4mm] (03a) at (0,0) {$\bullet$};
	\node[inner sep=.4mm] (03b) at (-.866,.5) {$\bullet$};
	\node[inner sep=.4mm] (03c) at (-.866,-.5) {$\bullet$};
	\node[inner sep=.4mm] (13a) at (1,0) {$\bullet$};
	\node[inner sep=.4mm] (13b) at (-1.366,1.366) {$\bullet$};
	\node[inner sep=.4mm] (13c) at (-1.366,-1.366) {$\bullet$};
	
	\draw[->] (03a) to [bend right=30] (03b);
	\draw[->] (03b) to [bend right=30] (03c);
	\draw[->] (03c) to [bend right=30] (03a);
	\draw[->] (13a) to (03a);
	\draw[->] (13b) to (03b);
	\draw[->] (13c) to (03c);
	\end{tikzpicture}&
	
	\begin{tikzpicture}[scale=1]
	\tikzset{vertex/.style = {}}
	\tikzset{every loop/.style={min distance=10mm,in=45,out=-45,->}}
	\tikzset{edge/.style={decoration={markings,mark=at position 1 with %
	    {\arrow[scale=1.2,>=stealth]{>}}},postaction={decorate}}}
	    
	\node[inner sep=.4mm] (01a) at (0,0) {$\bullet$};
	\node[inner sep=.4mm] (11a) at (-1,0) {$\bullet$};
	\node[inner sep=.4mm] (21a) at (-1.866,.5) {$\bullet$};
	\node[inner sep=.4mm] (21b) at (-1.866,-.5) {$\bullet$};
	\node[inner sep=.4mm] (01b) at (3,0) {$\bullet$};
	\node[inner sep=.4mm] (11b) at (2,0) {$\bullet$};
	\node[inner sep=.4mm] (21c) at (1.134,-.5) {$\bullet$};
	\node[inner sep=.4mm] (21d) at (1.134,.5) {$\bullet$};
	
	\draw[->] (01a) to [out=-45, in=45, looseness=10] (01a);
	\draw[->] (11a) to (01a);
	\draw[->] (21a) to (11a);
	\draw[->] (21b) to (11a);
	\draw[->] (01b) to [out=-45, in=45, looseness=10] (01b);
	\draw[->] (11b) to (01b);
	\draw[->] (21c) to (11b);
	\draw[->] (21d) to (11b);
	\end{tikzpicture}\\
	
	\hline
	
	\multicolumn{1}{|l|}{{\bf 8(1,1)b} \hfill \cite[p. 22]{poonen:1998}}&
	\multicolumn{1}{l|}{{\bf 8(2)a}  \hfill \cite[p. 20]{poonen:1998}}\\
	\begin{tikzpicture}[scale=1]
	\tikzset{vertex/.style = {}}
	\tikzset{every loop/.style={min distance=10mm,in=45,out=-45,->}}
	\tikzset{edge/.style={decoration={markings,mark=at position 1 with %
	    {\arrow[scale=1.2,>=stealth]{>}}},postaction={decorate}}}
	    
	\node[inner sep=.4mm] (01a) at (1,0) {$\bullet$};
	\node[inner sep=.4mm] (11a) at (0,0) {$\bullet$};
	\node[inner sep=.4mm] (12a) at (-.866,.5) {$\bullet$};
	\node[inner sep=.4mm] (12b) at (-.866,-.5) {$\bullet$};
	\node[inner sep=.4mm] (22a) at (-1.732,1) {$\bullet$};
	\node[inner sep=.4mm] (22b) at (-1.732,0) {$\bullet$};
	\node[inner sep=.4mm] (01b) at (1,1) {$\bullet$};
	\node[inner sep=.4mm] (11b) at (0,1) {$\bullet$};
	
	\draw[->] (01a) to [out=-45, in=45, looseness=10] (01a);
	\draw[->] (01b) to [out=-45, in=45, looseness=10] (01b);
	\draw[->] (11a) to (01a);
	\draw[->] (11b) to (01b);
	\draw[->] (12a) to (11a);
	\draw[->] (12b) to (11a);
	\draw[->] (22a) to (12a);
	\draw[->] (22b) to (12a);
	\end{tikzpicture}&
	
	\begin{tikzpicture}[scale=1]
	\tikzset{vertex/.style = {}}
	\tikzset{every loop/.style={min distance=10mm,in=45,out=-45,->}}
	\tikzset{edge/.style={decoration={markings,mark=at position 1 with %
	    {\arrow[scale=1.2,>=stealth]{>}}},postaction={decorate}}}
	    
	\node[inner sep=.4mm] (02a) at (1,0) {$\bullet$};
	\node[inner sep=.4mm] (02b) at (2,0) {$\bullet$};
	\node[inner sep=.4mm] (12a) at (0,0) {$\bullet$};
	\node[inner sep=.4mm] (12b) at (3,0) {$\bullet$};
	\node[inner sep=.4mm] (22a) at (-.866,.5) {$\bullet$};
	\node[inner sep=.4mm] (22b) at (-.866,-.5) {$\bullet$};
	\node[inner sep=.4mm] (22c) at (3.866,.5) {$\bullet$};
	\node[inner sep=.4mm] (22d) at (3.866,-.5) {$\bullet$};
	
	\draw[->] (02a) to [bend right=60] (02b);
	\draw[->] (02b) to [bend right=60] (02a);
	\draw[->] (12a) to (02a);
	\draw[->] (12b) to (02b);
	\draw[->] (22a) to (12a);
	\draw[->] (22b) to (12a);
	\draw[->] (22c) to (12b);
	\draw[->] (22d) to (12b);
	\end{tikzpicture}\\
	
	\hline
	
	\multicolumn{1}{|l|}{{\bf 8(2)b} \hfill \cite[p. 23]{poonen:1998}}&
	\multicolumn{1}{l|}{{\bf 8(3)} \hfill \cite[p. 24]{poonen:1998}}\\
	\begin{tikzpicture}[scale=1]
	\tikzset{vertex/.style = {}}
	\tikzset{every loop/.style={min distance=10mm,in=45,out=-45,->}}
	\tikzset{edge/.style={decoration={markings,mark=at position 1 with %
	    {\arrow[scale=1.2,>=stealth]{>}}},postaction={decorate}}}
	    
	\node[inner sep=.4mm] (02a) at (1,0) {$\bullet$};
	\node[inner sep=.4mm] (02b) at (2,0) {$\bullet$};
	\node[inner sep=.4mm] (12a) at (0,0) {$\bullet$};
	\node[inner sep=.4mm] (12b) at (3,0) {$\bullet$};
	\node[inner sep=.4mm] (22a) at (-.866,.5) {$\bullet$};
	\node[inner sep=.4mm] (22b) at (-.866,-.5) {$\bullet$};
	\node[inner sep=.4mm] (32a) at (-1.732,1) {$\bullet$};
	\node[inner sep=.4mm] (32b) at (-1.732,0) {$\bullet$};
	
	\draw[->] (02a) to [bend right=60] (02b);
	\draw[->] (02b) to [bend right=60] (02a);
	\draw[->] (12a) to (02a);
	\draw[->] (12b) to (02b);
	\draw[->] (22a) to (12a);
	\draw[->] (22b) to (12a);
	\draw[->] (32a) to (22a);
	\draw[->] (32b) to (22a);
	\end{tikzpicture}&
	
	\begin{tikzpicture}[scale=1]
	\tikzset{vertex/.style = {}}
	\tikzset{every loop/.style={min distance=10mm,in=45,out=-45,->}}
	\tikzset{edge/.style={decoration={markings,mark=at position 1 with %
	    {\arrow[scale=1.2,>=stealth]{>}}},postaction={decorate}}}
	    
	\node[inner sep=.4mm] (03a) at (0,0) {$\bullet$};
	\node[inner sep=.4mm] (03b) at (-.866,.5) {$\bullet$};
	\node[inner sep=.4mm] (03c) at (-.866,-.5) {$\bullet$};
	\node[inner sep=.4mm] (13a) at (1,0) {$\bullet$};
	\node[inner sep=.4mm] (13b) at (-1.366,1.366) {$\bullet$};
	\node[inner sep=.4mm] (13c) at (-1.366,-1.366) {$\bullet$};
	\node[inner sep=.4mm] (23a) at (1.866,.5) {$\bullet$};
	\node[inner sep=.4mm] (23b) at (1.866,-.5) {$\bullet$};
	
	\draw[->] (03a) to [bend right=30] (03b);
	\draw[->] (03b) to [bend right=30] (03c);
	\draw[->] (03c) to [bend right=30] (03a);
	\draw[->] (13a) to (03a);
	\draw[->] (13b) to (03b);
	\draw[->] (13c) to (03c);
	\draw[->] (23a) to (13a);
	\draw[->] (23b) to (13a);
	\end{tikzpicture}\\
	
	\hline
	
	\multicolumn{1}{|l|}{{\bf 8(4)} \hfill \cite[p. 93]{morton:1998}}&
	\multicolumn{1}{l|}{{\bf 8(2,1,1)} \hfill \cite[Theorem 2]{poonen:1998}}\\
	
	\begin{tikzpicture}[scale=1.1]
	\tikzset{vertex/.style = {}}
	\tikzset{every loop/.style={min distance=10mm,in=45,out=-45,->}}
	\tikzset{edge/.style={decoration={markings,mark=at position 1 with %
	    {\arrow[scale=1.2,>=stealth]{>}}},postaction={decorate}}}
	    
	\node[inner sep=.4mm] (04a) at (0,0) {$\bullet$};
	\node[inner sep=.4mm] (04b) at (1,0) {$\bullet$};
	\node[inner sep=.4mm] (04c) at (1,1) {$\bullet$};
	\node[inner sep=.4mm] (04d) at (0,1) {$\bullet$};
	\node[inner sep=.4mm] (14a) at (-.707,-.707) {$\bullet$};
	\node[inner sep=.4mm] (14b) at (1.707,-.707) {$\bullet$};
	\node[inner sep=.4mm] (14c) at (1.707,1.707) {$\bullet$};
	\node[inner sep=.4mm] (14d) at (-.707,1.707) {$\bullet$};
	
	\draw[->] (04a) to [bend right=30] (04b);
	\draw[->] (04b) to [bend right=30] (04c);
	\draw[->] (04c) to [bend right=30] (04d);
	\draw[->] (04d) to [bend right=30] (04a);
	\draw[->] (14a) to (04a);
	\draw[->] (14b) to (04b);
	\draw[->] (14c) to (04c);
	\draw[->] (14d) to (04d);
	\end{tikzpicture}&
	
	\begin{tikzpicture}[scale=1]
	\tikzset{vertex/.style = {}}
	\tikzset{every loop/.style={min distance=10mm,in=45,out=-45,->}}
	\tikzset{edge/.style={decoration={markings,mark=at position 1 with %
	    {\arrow[scale=1.2,>=stealth]{>}}},postaction={decorate}}}
	    
	\node[inner sep=.4mm] (01a) at (.5,1) {$\bullet$};
	\node[inner sep=.4mm] (11a) at (-.5,1) {$\bullet$};
	\node[inner sep=.4mm] (01b) at (3,1) {$\bullet$};
	\node[inner sep=.4mm] (11b) at (2,1) {$\bullet$};
	\node[inner sep=.4mm] (02a) at (1,0) {$\bullet$};
	\node[inner sep=.4mm] (02b) at (2,0) {$\bullet$};
	\node[inner sep=.4mm] (12a) at (0,0) {$\bullet$};
	\node[inner sep=.4mm] (12b) at (3,0) {$\bullet$};
	
	\draw[->] (01a) to [out=-45, in=45, looseness=10] (01a);
	\draw[->] (01b) to [out=-45, in=45, looseness=10] (01b);
	\draw[->] (11a) to (01a);
	\draw[->] (11b) to (01b);
	\draw[->] (02a) to [bend right=60] (02b);
	\draw[->] (02b) to [bend right=60] (02a);
	\draw[->] (12a) to (02a);
	\draw[->] (12b) to (02b);
	\end{tikzpicture}\\
	
	\hline
	\newpage
    \hline
	\multicolumn{1}{|l|}{{\bf 10(1,1)a} \hfill \cite[Lemma 3.17]{doyle/faber/krumm:2014}}&
	\multicolumn{1}{l|}{{\bf 10(2,1,1)a} \hfill \cite[p. 22]{poonen:1998}}\\
	
	\begin{tikzpicture}[scale=1]
	\tikzset{vertex/.style = {}}
	\tikzset{every loop/.style={min distance=10mm,in=45,out=-45,->}}
	\tikzset{edge/.style={decoration={markings,mark=at position 1 with %
	    {\arrow[scale=1.2,>=stealth]{>}}},postaction={decorate}}}
	    
	\node[inner sep=.4mm] (01a) at (1,0) {$\bullet$};
	\node[inner sep=.4mm] (11a) at (0,0) {$\bullet$};
	\node[inner sep=.4mm] (12a) at (-.866,.5) {$\bullet$};
	\node[inner sep=.4mm] (12b) at (-.866,-.5) {$\bullet$};
	\node[inner sep=.4mm] (22a) at (-1.834,.75) {$\bullet$};
	\node[inner sep=.4mm] (22b) at (-1.834,.25) {$\bullet$};
	\node[inner sep=.4mm] (01b) at (1,1) {$\bullet$};
	\node[inner sep=.4mm] (11b) at (0,1) {$\bullet$};
	\node[inner sep=.4mm] (22c) at (-1.834,-.75) {$\bullet$};
	\node[inner sep=.4mm] (22d) at (-1.834,-.25) {$\bullet$};
	
	\draw[->] (01a) to [out=-45, in=45, looseness=10] (01a);
	\draw[->] (01b) to [out=-45, in=45, looseness=10] (01b);
	\draw[->] (11a) to (01a);
	\draw[->] (11b) to (01b);
	\draw[->] (12a) to (11a);
	\draw[->] (12b) to (11a);
	\draw[->] (22a) to (12a);
	\draw[->] (22b) to (12a);
	\draw[->] (22c) to (12b);
	\draw[->] (22d) to (12b);
	\end{tikzpicture}&
	
	\begin{tikzpicture}[scale=1]
	\tikzset{vertex/.style = {}}
	\tikzset{every loop/.style={min distance=10mm,in=45,out=-45,->}}
	\tikzset{edge/.style={decoration={markings,mark=at position 1 with %
	    {\arrow[scale=1.2,>=stealth]{>}}},postaction={decorate}}}
	    
	\node[inner sep=.4mm] (01a) at (.5,1) {$\bullet$};
	\node[inner sep=.4mm] (11a) at (-.5,1) {$\bullet$};
	\node[inner sep=.4mm] (01b) at (3,1) {$\bullet$};
	\node[inner sep=.4mm] (11b) at (2,1) {$\bullet$};
	\node[inner sep=.4mm] (02a) at (1,0) {$\bullet$};
	\node[inner sep=.4mm] (02b) at (2,0) {$\bullet$};
	\node[inner sep=.4mm] (12a) at (0,0) {$\bullet$};
	\node[inner sep=.4mm] (12b) at (3,0) {$\bullet$};
	\node[inner sep=.4mm] (22a) at (-.866,.5) {$\bullet$};
	\node[inner sep=.4mm] (22b) at (-.866,-.5) {$\bullet$};
	
	\draw[->] (01a) to [out=-45, in=45, looseness=10] (01a);
	\draw[->] (01b) to [out=-45, in=45, looseness=10] (01b);
	\draw[->] (11a) to (01a);
	\draw[->] (11b) to (01b);
	\draw[->] (02a) to [bend right=60] (02b);
	\draw[->] (02b) to [bend right=60] (02a);
	\draw[->] (12a) to (02a);
	\draw[->] (12b) to (02b);
	\draw[->] (22a) to (12a);
	\draw[->] (22b) to (12a);
	\end{tikzpicture}\\
	
	\hline
	
	\multicolumn{1}{|l|}{{\bf 10(2,1,1)b} \hfill \cite[p. 21]{poonen:1998}}&
	\multicolumn{1}{l|}{{\bf 10(3,1,1)} \hfill \cite[p. 15]{poonen:1998}}\\
	
	\begin{tikzpicture}[scale=1]
	\tikzset{vertex/.style = {}}
	\tikzset{every loop/.style={min distance=10mm,in=45,out=-45,->}}
	\tikzset{edge/.style={decoration={markings,mark=at position 1 with %
	    {\arrow[scale=1.2,>=stealth]{>}}},postaction={decorate}}}
	    
	\node[inner sep=.4mm] (01a) at (1,1) {$\bullet$};
	\node[inner sep=.4mm] (11a) at (0,1) {$\bullet$};
	\node[inner sep=.4mm] (01b) at (3.5,1) {$\bullet$};
	\node[inner sep=.4mm] (11b) at (2.5,1) {$\bullet$};
	\node[inner sep=.4mm] (02a) at (1,0) {$\bullet$};
	\node[inner sep=.4mm] (02b) at (2,0) {$\bullet$};
	\node[inner sep=.4mm] (12a) at (0,0) {$\bullet$};
	\node[inner sep=.4mm] (12b) at (3,0) {$\bullet$};
	\node[inner sep=.4mm] (21a) at (-.866,1.5) {$\bullet$};
	\node[inner sep=.4mm] (21b) at (-.866,.5) {$\bullet$};
	
	\draw[->] (01a) to [out=-45, in=45, looseness=10] (01a);
	\draw[->] (01b) to [out=-45, in=45, looseness=10] (01b);
	\draw[->] (11a) to (01a);
	\draw[->] (11b) to (01b);
	\draw[->] (02a) to [bend right=60] (02b);
	\draw[->] (02b) to [bend right=60] (02a);
	\draw[->] (12a) to (02a);
	\draw[->] (12b) to (02b);
	\draw[->] (21a) to (11a);
	\draw[->] (21b) to (11a);
	\end{tikzpicture}&
	
	\begin{tikzpicture}[scale=1]
	\tikzset{vertex/.style = {}}
	\tikzset{every loop/.style={min distance=10mm,in=45,out=-45,->}}
	\tikzset{edge/.style={decoration={markings,mark=at position 1 with %
	    {\arrow[scale=1.2,>=stealth]{>}}},postaction={decorate}}}
	    
	\node[inner sep=.4mm] (03a) at (0,0) {$\bullet$};
	\node[inner sep=.4mm] (03c) at (-.866,-.5) {$\bullet$};
	\node[inner sep=.4mm] (03b) at (-.866,.5) {$\bullet$};
	\node[inner sep=.4mm] (13a) at (1,0) {$\bullet$};
	\node[inner sep=.4mm] (13c) at (-1.366,-1.366) {$\bullet$};
	\node[inner sep=.4mm] (13b) at (-1.366,1.366) {$\bullet$};
	\node[inner sep=.4mm] (01a) at (1,1) {$\bullet$};
	\node[inner sep=.4mm] (11a) at (0,1) {$\bullet$};
	\node[inner sep=.4mm] (01b) at (1,-1) {$\bullet$};
	\node[inner sep=.4mm] (11b) at (0,-1) {$\bullet$};
	
	\draw[->] (03a) to [bend right=30] (03b);
	\draw[->] (03b) to [bend right=30] (03c);
	\draw[->] (03c) to [bend right=30] (03a);
	\draw[->] (13a) to (03a);
	\draw[->] (13b) to (03b);
	\draw[->] (13c) to (03c);
	\draw[->] (01a) to [out=-45, in=45, looseness=10] (01a);
	\draw[->] (01b) to [out=-45, in=45, looseness=10] (01b);
	\draw[->] (11a) to (01a);
	\draw[->] (11b) to (01b);
	\end{tikzpicture}\\
	
	\hline
	
	\multicolumn{1}{|l|}{{\bf 10(3,2)} \hfill \cite[p. 15]{poonen:1998}}&
	\multicolumn{1}{l|}{{\bf 12(3,3)} \hfill \cite[p. 355]{morton:1992}}\\
	
	\begin{tikzpicture}[scale=1]
	\tikzset{vertex/.style = {}}
	\tikzset{every loop/.style={min distance=10mm,in=45,out=-45,->}}
	\tikzset{edge/.style={decoration={markings,mark=at position 1 with %
	    {\arrow[scale=1.2,>=stealth]{>}}},postaction={decorate}}}
	    
	\node[inner sep=.4mm] (03a) at (0,0) {$\bullet$};
	\node[inner sep=.4mm] (03b) at (-.866,.5) {$\bullet$};
	\node[inner sep=.4mm] (03c) at (-.866,-.5) {$\bullet$};
	\node[inner sep=.4mm] (13a) at (1,0) {$\bullet$};
	\node[inner sep=.4mm] (13b) at (-1.366,1.366) {$\bullet$};
	\node[inner sep=.4mm] (13c) at (-1.366,-1.366) {$\bullet$};
	\node[inner sep=.4mm] (02a) at (-3.75,0) {$\bullet$};
	\node[inner sep=.4mm] (02b) at (-2.75,0) {$\bullet$};
	\node[inner sep=.4mm] (12a) at (-4.75,0) {$\bullet$};
	\node[inner sep=.4mm] (12b) at (-1.75,0) {$\bullet$};
	
	\draw[->] (03a) to [bend right=30] (03b);
	\draw[->] (03b) to [bend right=30] (03c);
	\draw[->] (03c) to [bend right=30] (03a);
	\draw[->] (13a) to (03a);
	\draw[->] (13b) to (03b);
	\draw[->] (13c) to (03c);
	\draw[->] (02a) to [bend right=60] (02b);
	\draw[->] (02b) to [bend right=60] (02a);
	\draw[->] (12a) to (02a);
	\draw[->] (12b) to (02b);
	\end{tikzpicture}&
	
	\begin{tikzpicture}[scale=1]
	\tikzset{vertex/.style = {}}
	\tikzset{every loop/.style={min distance=10mm,in=45,out=-45,->}}
	\tikzset{edge/.style={decoration={markings,mark=at position 1 with %
	    {\arrow[scale=1.2,>=stealth]{>}}},postaction={decorate}}}
	    
	\node[inner sep=.4mm] (03a) at (0,0) {$\bullet$};
	\node[inner sep=.4mm] (03b) at (-.866,.5) {$\bullet$};
	\node[inner sep=.4mm] (03c) at (-.866,-.5) {$\bullet$};
	\node[inner sep=.4mm] (13a) at (1,0) {$\bullet$};
	\node[inner sep=.4mm] (13b) at (-1.366,1.366) {$\bullet$};
	\node[inner sep=.4mm] (13c) at (-1.366,-1.366) {$\bullet$};
	\node[inner sep=.4mm] (03d) at (-3,0) {$\bullet$};
	\node[inner sep=.4mm] (03e) at (-3.866,.5) {$\bullet$};
	\node[inner sep=.4mm] (03f) at (-3.866,-.5) {$\bullet$};
	\node[inner sep=.4mm] (13d) at (-2,0) {$\bullet$};
	\node[inner sep=.4mm] (13e) at (-4.366,1.366) {$\bullet$};
	\node[inner sep=.4mm] (13f) at (-4.366,-1.366) {$\bullet$};
	
	\draw[->] (03a) to [bend right=30] (03b);
	\draw[->] (03b) to [bend right=30] (03c);
	\draw[->] (03c) to [bend right=30] (03a);
	\draw[->] (13a) to (03a);
	\draw[->] (13b) to (03b);
	\draw[->] (13c) to (03c);
	\draw[->] (03d) to [bend right=30] (03e);
	\draw[->] (03e) to [bend right=30] (03f);
	\draw[->] (03f) to [bend right=30] (03d);
	\draw[->] (13d) to (03d);
	\draw[->] (13e) to (03e);
	\draw[->] (13f) to (03f);
	\end{tikzpicture} \\
	
	\hline
\end{longtable}

\subsection{Additional portraits}

We now list various portraits that, despite not appearing in the statements of the main results, are used in the proofs of those results. As above, for each such portrait $\calP$ we include, when possible, a reference for a model for $X_1(\calP)$.

In an attempt to standardize the labels of the portraits, we have adopted the labeling scheme from \cite{doyle/faber/krumm:2014}. This requires us to rename some of the portraits that had previously been introduced in \cite{doyle/faber/krumm:2014, doyle:2018quad}, so before listing the portraits in this section, we provide a dictionary between the two labeling schemes. The graph 10(2) appears in \cite{doyle/faber/krumm:2014}, and the graphs labeled $G_n$ appear in \cite{doyle:2018quad}.

\begin{center}
\begin{tabular}{ll}
Previous label & New label\\
\hline
10(2) & 10(2)a\\
$G_1$ & 10(1,1)c\\
$G_2$ & 10(2)b\\
$G_3$ & 10(2)c\\
$G_4$ & 12(2,1,1)c\\
$G_5$ & 12(2,1,1)d\\
$G_6$ & 12(2,1,1)e\\
$G_7$ & 12(3,1,1)a\\
$G_8$ & 12(3,1,1)b\\
$G_9$ & 12(3,2)a\\
$G_{10}$ & 12(3,2)b
\end{tabular}
\end{center}

\begin{longtable}{|M{.465\textwidth}|M{.465\textwidth}|}
\hline
	\multicolumn{1}{|l|}{{\bf 10(1,1)b} \hfill \cite[Lemma 4.9]{doyle/krumm:2024}}&
	\multicolumn{1}{l|}{{\bf 10(1,1)c} \hfill \cite[Proposition 5.3]{doyle:2018quad}}\\
	\begin{tikzpicture}[scale=1]
	\tikzset{vertex/.style = {}}
	\tikzset{every loop/.style={min distance=10mm,in=45,out=-45,->}}
	\tikzset{edge/.style={decoration={markings,mark=at position 1 with %
	    {\arrow[scale=1.2,>=stealth]{>}}},postaction={decorate}}}
	    
	\node[inner sep=.4mm] (01a) at (1,0) {$\bullet$};
	\node[inner sep=.4mm] (11a) at (0,0) {$\bullet$};
	\node[inner sep=.4mm] (21a) at (-.866,.5) {$\bullet$};
	\node[inner sep=.4mm] (21b) at (-.866,-.5) {$\bullet$};
	\node[inner sep=.4mm] (31a) at (-1.732,1) {$\bullet$};
	\node[inner sep=.4mm] (31b) at (-1.732,0) {$\bullet$};
	\node[inner sep=.4mm] (01b) at (1,1) {$\bullet$};
	\node[inner sep=.4mm] (11b) at (0,1) {$\bullet$};
	\node[inner sep=.4mm] (41a) at (-2.598,1.5) {$\bullet$};
	\node[inner sep=.4mm] (41b) at (-2.598,.5) {$\bullet$};
	
	\draw[->] (01a) to [out=-45, in=45, looseness=10] (01a);
	\draw[->] (01b) to [out=-45, in=45, looseness=10] (01b);
	\draw[->] (11a) to (01a);
	\draw[->] (11b) to (01b);
	\draw[->] (21a) to (11a);
	\draw[->] (21b) to (11a);
	\draw[->] (31a) to (21a);
	\draw[->] (31b) to (21a);
	\draw[->] (41a) to (31a);
	\draw[->] (41b) to (31a);
	\end{tikzpicture}&
	
	\begin{tikzpicture}[scale=1]
	\tikzset{vertex/.style = {}}
	\tikzset{every loop/.style={min distance=10mm,in=45,out=-45,->}}
	\tikzset{edge/.style={decoration={markings,mark=at position 1 with %
	    {\arrow[scale=1.2,>=stealth]{>}}},postaction={decorate}}}
	    
	\node[inner sep=.4mm] (01a) at (0,0) {$\bullet$};
	\node[inner sep=.4mm] (11a) at (-1,0) {$\bullet$};
	\node[inner sep=.4mm] (21a) at (-1.866,-.5) {$\bullet$};
	\node[inner sep=.4mm] (21b) at (-1.866,.5) {$\bullet$};
	\node[inner sep=.4mm] (01b) at (0,-1.5) {$\bullet$};
	\node[inner sep=.4mm] (11b) at (-1,-1.5) {$\bullet$};
	\node[inner sep=.4mm] (21c) at (-1.866,-1) {$\bullet$};
	\node[inner sep=.4mm] (21d) at (-1.866,-2) {$\bullet$};
	\node[inner sep=.4mm] (31a) at (-2.732,-1.5) {$\bullet$};
	\node[inner sep=.4mm] (31b) at (-2.732,-.5) {$\bullet$};
	
	\draw[->] (01a) to [out=-45, in=45, looseness=10] (01a);
	\draw[->] (11a) to (01a);
	\draw[->] (21a) to (11a);
	\draw[->] (21b) to (11a);
	\draw[->] (01b) to [out=-45, in=45, looseness=10] (01b);
	\draw[->] (11b) to (01b);
	\draw[->] (21c) to (11b);
	\draw[->] (21d) to (11b);
	\draw[->] (31a) to (21c);
	\draw[->] (31b) to (21c);
	\end{tikzpicture}\\

	\hline

	\multicolumn{1}{|l|}{{\bf 10(2)a}}&
	\multicolumn{1}{l|}{{\bf 10(2)b} \hfill \cite[Lemma 3.30]{doyle/faber/krumm:2014}}\\
	\begin{tikzpicture}[scale=1]
	\tikzset{vertex/.style = {}}
	\tikzset{every loop/.style={min distance=10mm,in=45,out=-45,->}}
	\tikzset{edge/.style={decoration={markings,mark=at position 1 with %
	    {\arrow[scale=1.2,>=stealth]{>}}},postaction={decorate}}}
	    
	\node[inner sep=.4mm] (02a) at (1,0) {$\bullet$};
	\node[inner sep=.4mm] (02b) at (2,0) {$\bullet$};
	\node[inner sep=.4mm] (12a) at (0,0) {$\bullet$};
	\node[inner sep=.4mm] (12b) at (3,0) {$\bullet$};
	\node[inner sep=.4mm] (22a) at (-.866,.5) {$\bullet$};
	\node[inner sep=.4mm] (22b) at (-.866,-.5) {$\bullet$};
	\node[inner sep=.4mm] (32a) at (-1.732,1) {$\bullet$};
	\node[inner sep=.4mm] (32b) at (-1.732,0) {$\bullet$};
	\node[inner sep=.4mm] (42a) at (-2.598,1.5) {$\bullet$};
	\node[inner sep=.4mm] (42b) at (-2.598,.5) {$\bullet$};
	
	\draw[->] (02a) to [bend right=60] (02b);
	\draw[->] (02b) to [bend right=60] (02a);
	\draw[->] (12a) to (02a);
	\draw[->] (12b) to (02b);
	\draw[->] (22a) to (12a);
	\draw[->] (22b) to (12a);
	\draw[->] (32a) to (22a);
	\draw[->] (32b) to (22a);
	\draw[->] (42a) to (32a);
	\draw[->] (42b) to (32a);
	\end{tikzpicture}&
	
	\begin{tikzpicture}[scale=1]
	\tikzset{vertex/.style = {}}
	\tikzset{every loop/.style={min distance=10mm,in=45,out=-45,->}}
	\tikzset{edge/.style={decoration={markings,mark=at position 1 with %
	    {\arrow[scale=1.2,>=stealth]{>}}},postaction={decorate}}}
	    
	\node[inner sep=.4mm] (02a) at (1,0) {$\bullet$};
	\node[inner sep=.4mm] (02b) at (2,0) {$\bullet$};
	\node[inner sep=.4mm] (12a) at (0,0) {$\bullet$};
	\node[inner sep=.4mm] (12b) at (3,0) {$\bullet$};
	\node[inner sep=.4mm] (22a) at (-.866,.5) {$\bullet$};
	\node[inner sep=.4mm] (22b) at (-.866,-.5) {$\bullet$};
	\node[inner sep=.4mm] (32a) at (-1.732,1) {$\bullet$};
	\node[inner sep=.4mm] (32b) at (-1.732,0) {$\bullet$};
	\node[inner sep=.4mm] (22c) at (3.866,.5) {$\bullet$};
	\node[inner sep=.4mm] (22d) at (3.866,-.5) {$\bullet$};
	
	\draw[->] (02a) to [bend right=60] (02b);
	\draw[->] (02b) to [bend right=60] (02a);
	\draw[->] (12a) to (02a);
	\draw[->] (12b) to (02b);
	\draw[->] (22a) to (12a);
	\draw[->] (22b) to (12a);
	\draw[->] (32a) to (22a);
	\draw[->] (32b) to (22a);
	\draw[->] (22c) to (12b);
	\draw[->] (22d) to (12b);
	\end{tikzpicture}\\

	\hline

	\multicolumn{1}{|l|}{{\bf 10(2)c} \hfill \cite[Lemma 3.49]{doyle/faber/krumm:2014}}&
	\multicolumn{1}{l|}{\bf 10(3)a}\\
	\begin{tikzpicture}[scale=1]
	\tikzset{vertex/.style = {}}
	\tikzset{every loop/.style={min distance=10mm,in=45,out=-45,->}}
	\tikzset{edge/.style={decoration={markings,mark=at position 1 with %
	    {\arrow[scale=1.2,>=stealth]{>}}},postaction={decorate}}}
	    
	\node[inner sep=.4mm] (02a) at (1,0) {$\bullet$};
	\node[inner sep=.4mm] (02b) at (2,0) {$\bullet$};
	\node[inner sep=.4mm] (12a) at (0,0) {$\bullet$};
	\node[inner sep=.4mm] (12b) at (3,0) {$\bullet$};
	\node[inner sep=.4mm] (22a) at (-.866,.5) {$\bullet$};
	\node[inner sep=.4mm] (22b) at (-.866,-.5) {$\bullet$};
	\node[inner sep=.4mm] (32a) at (-1.834,.75) {$\bullet$};
	\node[inner sep=.4mm] (32b) at (-1.834,.25) {$\bullet$};
	\node[inner sep=.4mm] (32c) at (-1.834,-.25) {$\bullet$};
	\node[inner sep=.4mm] (32d) at (-1.834,-.75) {$\bullet$};
	
	\draw[->] (02a) to [bend right=60] (02b);
	\draw[->] (02b) to [bend right=60] (02a);
	\draw[->] (12a) to (02a);
	\draw[->] (12b) to (02b);
	\draw[->] (22a) to (12a);
	\draw[->] (22b) to (12a);
	\draw[->] (32a) to (22a);
	\draw[->] (32b) to (22a);
	\draw[->] (32c) to (22b);
	\draw[->] (32d) to (22b);
	\end{tikzpicture}&
	
	\begin{tikzpicture}[scale=1]
	\tikzset{vertex/.style = {}}
	\tikzset{every loop/.style={min distance=10mm,in=45,out=-45,->}}
	\tikzset{edge/.style={decoration={markings,mark=at position 1 with %
	    {\arrow[scale=1.2,>=stealth]{>}}},postaction={decorate}}}
	    
	\node[inner sep=.4mm] (03a) at (0,0) {$\bullet$};
	\node[inner sep=.4mm] (03b) at (-.866,.5) {$\bullet$};
	\node[inner sep=.4mm] (03c) at (-.866,-.5) {$\bullet$};
	\node[inner sep=.4mm] (13a) at (1,0) {$\bullet$};
	\node[inner sep=.4mm] (13b) at (-1.366,1.366) {$\bullet$};
	\node[inner sep=.4mm] (13c) at (-1.366,-1.366) {$\bullet$};
	\node[inner sep=.4mm] (23a) at (1.866,.5) {$\bullet$};
	\node[inner sep=.4mm] (23b) at (1.866,-.5) {$\bullet$};
	\node[inner sep=.4mm] (33a) at (2.732,1) {$\bullet$};
	\node[inner sep=.4mm] (33b) at (2.732,0) {$\bullet$};
	
	\draw[->] (03a) to [bend right=30] (03b);
	\draw[->] (03b) to [bend right=30] (03c);
	\draw[->] (03c) to [bend right=30] (03a);
	\draw[->] (13a) to (03a);
	\draw[->] (13b) to (03b);
	\draw[->] (13c) to (03c);
	\draw[->] (23a) to (13a);
	\draw[->] (23b) to (13a);
	\draw[->] (33a) to (23a);
	\draw[->] (33b) to (23a);
	\end{tikzpicture}\\
	
	\hline
	
	\multicolumn{1}{|l|}{\bf 10(3)b}&
	\multicolumn{1}{l|}{{\bf 10(4)} \hfill \cite[Lemma 3.41]{doyle/faber/krumm:2014}}\\
	
	\begin{tikzpicture}[scale=1]
	\tikzset{vertex/.style = {}}
	\tikzset{every loop/.style={min distance=10mm,in=45,out=-45,->}}
	\tikzset{edge/.style={decoration={markings,mark=at position 1 with %
	    {\arrow[scale=1.2,>=stealth]{>}}},postaction={decorate}}}
	    
	\node[inner sep=.4mm] (03a) at (0,0) {$\bullet$};
	\node[inner sep=.4mm] (03b) at (-.866,.5) {$\bullet$};
	\node[inner sep=.4mm] (03c) at (-.866,-.5) {$\bullet$};
	\node[inner sep=.4mm] (13a) at (1,0) {$\bullet$};
	\node[inner sep=.4mm] (13b) at (-1.366,1.366) {$\bullet$};
	\node[inner sep=.4mm] (13c) at (-1.366,-1.366) {$\bullet$};
	\node[inner sep=.4mm] (23a) at (-2.232,1.866) {$\bullet$};
	\node[inner sep=.4mm] (23b) at (-1.366,2.366) {$\bullet$};
	\node[inner sep=.4mm] (23c) at (-2.232,-1.866) {$\bullet$};
	\node[inner sep=.4mm] (23d) at (-1.366,-2.366) {$\bullet$};
	
	\draw[->] (03a) to [bend right=30] (03b);
	\draw[->] (03b) to [bend right=30] (03c);
	\draw[->] (03c) to [bend right=30] (03a);
	\draw[->] (13a) to (03a);
	\draw[->] (13b) to (03b);
	\draw[->] (13c) to (03c);
	\draw[->] (23a) to (13b);
	\draw[->] (23b) to (13b);
	\draw[->] (23c) to (13c);
	\draw[->] (23d) to (13c);
	\end{tikzpicture}&
	
	\begin{tikzpicture}[scale=1.1]
	\tikzset{vertex/.style = {}}
	\tikzset{every loop/.style={min distance=10mm,in=45,out=-45,->}}
	\tikzset{edge/.style={decoration={markings,mark=at position 1 with %
	    {\arrow[scale=1.2,>=stealth]{>}}},postaction={decorate}}}
	    
	\node[inner sep=.4mm] (04a) at (0,0) {$\bullet$};
	\node[inner sep=.4mm] (04b) at (1,0) {$\bullet$};
	\node[inner sep=.4mm] (04c) at (1,1) {$\bullet$};
	\node[inner sep=.4mm] (04d) at (0,1) {$\bullet$};
	\node[inner sep=.4mm] (14a) at (-.707,-.707) {$\bullet$};
	\node[inner sep=.4mm] (14b) at (1.707,-.707) {$\bullet$};
	\node[inner sep=.4mm] (14c) at (1.707,1.707) {$\bullet$};
	\node[inner sep=.4mm] (14d) at (-.707,1.707) {$\bullet$};
	\node[inner sep=.4mm] (24c) at (-.966,2.673) {$\bullet$};
	\node[inner sep=.4mm] (24d) at (-1.673,1.966) {$\bullet$};
	
	\draw[->] (04a) to [bend right=30] (04b);
	\draw[->] (04b) to [bend right=30] (04c);
	\draw[->] (04c) to [bend right=30] (04d);
	\draw[->] (04d) to [bend right=30] (04a);
	\draw[->] (14a) to (04a);
	\draw[->] (14b) to (04b);
	\draw[->] (14c) to (04c);
	\draw[->] (14d) to (04d);
	\draw[->] (24c) to (14d);
	\draw[->] (24d) to (14d);
	\end{tikzpicture}\\
	
	\hline
	
	\multicolumn{1}{|l|}{\bf 12(1,1)a}&
	\multicolumn{1}{l|}{\bf 12(1,1)b}\\
	
	\begin{tikzpicture}[scale=1]
	\tikzset{vertex/.style = {}}
	\tikzset{every loop/.style={min distance=10mm,in=45,out=-45,->}}
	\tikzset{edge/.style={decoration={markings,mark=at position 1 with %
	    {\arrow[scale=1.2,>=stealth]{>}}},postaction={decorate}}}
		    
	\node[inner sep=.4mm] (01a) at (1,0) {$\bullet$};
	\node[inner sep=.4mm] (11a) at (0,0) {$\bullet$};
	\node[inner sep=.4mm] (21a) at (-.866,.5) {$\bullet$};
	\node[inner sep=.4mm] (21b) at (-.866,-.5) {$\bullet$};
	\node[inner sep=.4mm] (31a) at (-1.834,.75) {$\bullet$};
	\node[inner sep=.4mm] (31b) at (-1.834,.25) {$\bullet$};
	\node[inner sep=.4mm] (01b) at (1,1) {$\bullet$};
	\node[inner sep=.4mm] (11b) at (0,1) {$\bullet$};
	\node[inner sep=.4mm] (31c) at (-1.834,-.75) {$\bullet$};
	\node[inner sep=.4mm] (31d) at (-1.834,-.25) {$\bullet$};
	\node[inner sep=.4mm] (41a) at (-2.668,1) {$\bullet$};
	\node[inner sep=.4mm] (41b) at (-2.668,.5) {$\bullet$};
		
	\draw[->] (01a) to [out=-45, in=45, looseness=10] (01a);
	\draw[->] (01b) to [out=-45, in=45, looseness=10] (01b);
	\draw[->] (11a) to (01a);
	\draw[->] (11b) to (01b);
	\draw[->] (21a) to (11a);
	\draw[->] (21b) to (11a);
	\draw[->] (31a) to (21a);
	\draw[->] (31b) to (21a);
	\draw[->] (31c) to (21b);
	\draw[->] (31d) to (21b);
	\draw[->] (41a) to (31a);
	\draw[->] (41b) to (31a);
	\end{tikzpicture}&

	\begin{tikzpicture}[scale=1]
	\tikzset{vertex/.style = {}}
	\tikzset{every loop/.style={min distance=10mm,in=45,out=-45,->}}
	\tikzset{edge/.style={decoration={markings,mark=at position 1 with %
	    {\arrow[scale=1.2,>=stealth]{>}}},postaction={decorate}}}
	    
	\node[inner sep=.4mm] (01a) at (1,0) {$\bullet$};
	\node[inner sep=.4mm] (11a) at (0,0) {$\bullet$};
	\node[inner sep=.4mm] (21a) at (-.866,.5) {$\bullet$};
	\node[inner sep=.4mm] (21b) at (-.866,-.5) {$\bullet$};
	\node[inner sep=.4mm] (31a) at (-1.834,.75) {$\bullet$};
	\node[inner sep=.4mm] (31b) at (-1.834,.25) {$\bullet$};
	\node[inner sep=.4mm] (01b) at (1,1.5) {$\bullet$};
	\node[inner sep=.4mm] (11b) at (0,1.5) {$\bullet$};
	\node[inner sep=.4mm] (31c) at (-1.834,-.75) {$\bullet$};
	\node[inner sep=.4mm] (31d) at (-1.834,-.25) {$\bullet$};
	\node[inner sep=.4mm] (21c) at (-.866,2) {$\bullet$};
	\node[inner sep=.4mm] (21d) at (-.866,1) {$\bullet$};
	
	\draw[->] (01a) to [out=-45, in=45, looseness=10] (01a);
	\draw[->] (01b) to [out=-45, in=45, looseness=10] (01b);
	\draw[->] (11a) to (01a);
	\draw[->] (11b) to (01b);
	\draw[->] (21a) to (11a);
	\draw[->] (21b) to (11a);
	\draw[->] (31a) to (21a);
	\draw[->] (31b) to (21a);
	\draw[->] (31c) to (21b);
	\draw[->] (31d) to (21b);
	\draw[->] (21c) to (11b);
	\draw[->] (21d) to (11b);
	\end{tikzpicture}\\
	
	\hline	
    \newpage
    \hline
	\multicolumn{1}{|l|}{{\bf 12(2,1,1)a} \hfill\cite[Lemma 3.34]{doyle/faber/krumm:2014}}&
	\multicolumn{1}{l|}{{\bf 12(2,1,1)b} \hfill\cite[Lemma 3.37]{doyle/faber/krumm:2014}}\\
	
	\begin{tikzpicture}[scale=1]
	\tikzset{vertex/.style = {}}
	\tikzset{every loop/.style={min distance=10mm,in=45,out=-45,->}}
	\tikzset{edge/.style={decoration={markings,mark=at position 1 with %
	    {\arrow[scale=1.2,>=stealth]{>}}},postaction={decorate}}}
	    
	\node[inner sep=.4mm] (01a) at (.5,1) {$\bullet$};
	\node[inner sep=.4mm] (11a) at (-.5,1) {$\bullet$};
	\node[inner sep=.4mm] (01b) at (3,1) {$\bullet$};
	\node[inner sep=.4mm] (11b) at (2,1) {$\bullet$};
	\node[inner sep=.4mm] (02a) at (1,0) {$\bullet$};
	\node[inner sep=.4mm] (02b) at (2,0) {$\bullet$};
	\node[inner sep=.4mm] (12a) at (0,0) {$\bullet$};
	\node[inner sep=.4mm] (12b) at (3,0) {$\bullet$};
	\node[inner sep=.4mm] (22a) at (-.866,.5) {$\bullet$};
	\node[inner sep=.4mm] (22b) at (-.866,-.5) {$\bullet$};
	\node[inner sep=.4mm] (22c) at (3.866,.5) {$\bullet$};
	\node[inner sep=.4mm] (22d) at (3.866,-.5) {$\bullet$};
	
	\draw[->] (01a) to [out=-45, in=45, looseness=10] (01a);
	\draw[->] (01b) to [out=-45, in=45, looseness=10] (01b);
	\draw[->] (11a) to (01a);
	\draw[->] (11b) to (01b);
	\draw[->] (02a) to [bend right=60] (02b);
	\draw[->] (02b) to [bend right=60] (02a);
	\draw[->] (12a) to (02a);
	\draw[->] (12b) to (02b);
	\draw[->] (22a) to (12a);
	\draw[->] (22b) to (12a);
	\draw[->] (22c) to (12b);
	\draw[->] (22d) to (12b);
	\end{tikzpicture}&
	
	\begin{tikzpicture}[scale=1]
	\tikzset{vertex/.style = {}}
	\tikzset{every loop/.style={min distance=10mm,in=45,out=-45,->}}
	\tikzset{edge/.style={decoration={markings,mark=at position 1 with %
	    {\arrow[scale=1.2,>=stealth]{>}}},postaction={decorate}}}
	    
	\node[inner sep=.4mm] (02a) at (1,0) {$\bullet$};
	\node[inner sep=.4mm] (02b) at (2,0) {$\bullet$};
	\node[inner sep=.4mm] (12a) at (0,0) {$\bullet$};
	\node[inner sep=.4mm] (12b) at (3,0) {$\bullet$};
	\node[inner sep=.4mm] (01a) at (1,1) {$\bullet$};
	\node[inner sep=.4mm] (11a) at (0,1) {$\bullet$};
	\node[inner sep=.4mm] (01b) at (3.5,1) {$\bullet$};
	\node[inner sep=.4mm] (11b) at (2.5,1) {$\bullet$};
	\node[inner sep=.4mm] (21a) at (-.866,1.5) {$\bullet$};
	\node[inner sep=.4mm] (21b) at (-.866,.5) {$\bullet$};
	\node[inner sep=.4mm] (31a) at (-1.732,2) {$\bullet$};
	\node[inner sep=.4mm] (31b) at (-1.732,1) {$\bullet$};
	
	\draw[->] (01a) to [out=-45, in=45, looseness=10] (01a);
	\draw[->] (01b) to [out=-45, in=45, looseness=10] (01b);
	\draw[->] (11a) to (01a);
	\draw[->] (11b) to (01b);
	\draw[->] (02a) to [bend right=60] (02b);
	\draw[->] (02b) to [bend right=60] (02a);
	\draw[->] (12a) to (02a);
	\draw[->] (12b) to (02b);
	\draw[->] (21a) to (11a);
	\draw[->] (21b) to (11a);
	\draw[->] (31a) to (21a);
	\draw[->] (31b) to (21a);
	\end{tikzpicture}\\
	
	\hline

	\multicolumn{1}{|l|}{{\bf 12(2,1,1)c} \hfill\cite[Proposition 5.10]{doyle:2018quad}}&
	\multicolumn{1}{l|}{{\bf 12(2,1,1)d} \hfill\cite[Proposition 5.14]{doyle:2018quad}}\\

	\begin{tikzpicture}[scale=1]
	\tikzset{vertex/.style = {}}
	\tikzset{every loop/.style={min distance=10mm,in=45,out=-45,->}}
	\tikzset{edge/.style={decoration={markings,mark=at position 1 with %
	    {\arrow[scale=1.2,>=stealth]{>}}},postaction={decorate}}}
	    
	\node[inner sep=.4mm] (02a) at (1,0) {$\bullet$};
	\node[inner sep=.4mm] (02b) at (2,0) {$\bullet$};
	\node[inner sep=.4mm] (12a) at (0,0) {$\bullet$};
	\node[inner sep=.4mm] (12b) at (3,0) {$\bullet$};
	\node[inner sep=.4mm] (22a) at (-.866,.5) {$\bullet$};
	\node[inner sep=.4mm] (22b) at (-.866,-.5) {$\bullet$};
	\node[inner sep=.4mm] (32a) at (-1.732,1) {$\bullet$};
	\node[inner sep=.4mm] (32b) at (-1.732,0) {$\bullet$};
	\node[inner sep=.4mm] (01a) at (.5,1) {$\bullet$};
	\node[inner sep=.4mm] (11a) at (-.5,1) {$\bullet$};
	\node[inner sep=.4mm] (01b) at (3,1) {$\bullet$};
	\node[inner sep=.4mm] (11b) at (2,1) {$\bullet$};
	
	\draw[->] (02a) to [bend right=60] (02b);
	\draw[->] (02b) to [bend right=60] (02a);
	\draw[->] (12a) to (02a);
	\draw[->] (12b) to (02b);
	\draw[->] (22a) to (12a);
	\draw[->] (22b) to (12a);
	\draw[->] (32a) to (22a);
	\draw[->] (32b) to (22a);
	\draw[->] (01a) to [out=-45, in=45, looseness=10] (01a);
	\draw[->] (01b) to [out=-45, in=45, looseness=10] (01b);
	\draw[->] (11a) to (01a);
	\draw[->] (11b) to (01b);
	\end{tikzpicture}&
	
	\begin{tikzpicture}[scale=1]
	\tikzset{vertex/.style = {}}
	\tikzset{every loop/.style={min distance=10mm,in=45,out=-45,->}}
	\tikzset{edge/.style={decoration={markings,mark=at position 1 with %
	    {\arrow[scale=1.2,>=stealth]{>}}},postaction={decorate}}}
	    
	\node[inner sep=.4mm] (01a) at (0,0) {$\bullet$};
	\node[inner sep=.4mm] (11a) at (-1,0) {$\bullet$};
	\node[inner sep=.4mm] (21a) at (-1.866,.5) {$\bullet$};
	\node[inner sep=.4mm] (21b) at (-1.866,-.5) {$\bullet$};
	\node[inner sep=.4mm] (01b) at (0,1.5) {$\bullet$};
	\node[inner sep=.4mm] (11b) at (-1,1.5) {$\bullet$};
	\node[inner sep=.4mm] (21c) at (-1.866,1) {$\bullet$};
	\node[inner sep=.4mm] (21d) at (-1.866,2) {$\bullet$};
	\node[inner sep=.4mm] (02a) at (2,.75) {$\bullet$};
	\node[inner sep=.4mm] (02b) at (3,.75) {$\bullet$};
	\node[inner sep=.4mm] (12a) at (1,.75) {$\bullet$};
	\node[inner sep=.4mm] (12b) at (4,.75) {$\bullet$};
	
	\draw[->] (01a) to [out=-45, in=45, looseness=10] (01a);
	\draw[->] (11a) to (01a);
	\draw[->] (21a) to (11a);
	\draw[->] (21b) to (11a);
	\draw[->] (01b) to [out=-45, in=45, looseness=10] (01b);
	\draw[->] (11b) to (01b);
	\draw[->] (21c) to (11b);
	\draw[->] (21d) to (11b);
	\draw[->] (02a) to [bend right=60] (02b);
	\draw[->] (02b) to [bend right=60] (02a);
	\draw[->] (12a) to (02a);
	\draw[->] (12b) to (02b);
	\end{tikzpicture}\\
	
	\hline
	
	\multicolumn{1}{|l|}{{\bf 12(2,1,1)e} \hfill\cite[Proposition 5.17]{doyle:2018quad}}&
	\multicolumn{1}{l|}{{\bf 12(3,1,1)a} \hfill\cite[Lemma 3.53]{doyle/faber/krumm:2014}}\\

	\begin{tikzpicture}[scale=1]
	\tikzset{vertex/.style = {}}
	\tikzset{every loop/.style={min distance=10mm,in=45,out=-45,->}}
	\tikzset{edge/.style={decoration={markings,mark=at position 1 with %
	    {\arrow[scale=1.2,>=stealth]{>}}},postaction={decorate}}}
	    
	\node[inner sep=.4mm] (01a) at (0,1) {$\bullet$};
	\node[inner sep=.4mm] (11a) at (-1,1) {$\bullet$};
	\node[inner sep=.4mm] (21a) at (-1.866,1.5) {$\bullet$};
	\node[inner sep=.4mm] (21b) at (-1.866,.5) {$\bullet$};
	\node[inner sep=.4mm] (01b) at (0,-1) {$\bullet$};
	\node[inner sep=.4mm] (11b) at (-1,-1) {$\bullet$};
	\node[inner sep=.4mm] (02a) at (-2,0) {$\bullet$};
	\node[inner sep=.4mm] (02b) at (-3,0) {$\bullet$};
	\node[inner sep=.4mm] (12a) at (-1,0) {$\bullet$};
	\node[inner sep=.4mm] (12b) at (-4,0) {$\bullet$};
	\node[inner sep=.4mm] (22a) at (-4.866,.5) {$\bullet$};
	\node[inner sep=.4mm] (22b) at (-4.866,-.5) {$\bullet$};
	
	\draw[->] (01a) to [out=-45, in=45, looseness=10] (01a);
	\draw[->] (11a) to (01a);
	\draw[->] (21a) to (11a);
	\draw[->] (21b) to (11a);
	\draw[->] (01b) to [out=-45, in=45, looseness=10] (01b);
	\draw[->] (11b) to (01b);
	\draw[->] (02a) to [bend right=60] (02b);
	\draw[->] (02b) to [bend right=60] (02a);
	\draw[->] (12a) to (02a);
	\draw[->] (12b) to (02b);
	\draw[->] (22a) to (12b);
	\draw[->] (22b) to (12b);
	\end{tikzpicture}&
	
	\begin{tikzpicture}[scale=1]
	\tikzset{vertex/.style = {}}
	\tikzset{every loop/.style={min distance=10mm,in=45,out=-45,->}}
	\tikzset{edge/.style={decoration={markings,mark=at position 1 with %
	    {\arrow[scale=1.2,>=stealth]{>}}},postaction={decorate}}}
	    
	\node[inner sep=.4mm] (03a) at (0,0) {$\bullet$};
	\node[inner sep=.4mm] (03b) at (-.866,.5) {$\bullet$};
	\node[inner sep=.4mm] (03c) at (-.866,-.5) {$\bullet$};
	\node[inner sep=.4mm] (13a) at (1,0) {$\bullet$};
	\node[inner sep=.4mm] (13b) at (-1.366,1.366) {$\bullet$};
	\node[inner sep=.4mm] (13c) at (-1.366,-1.366) {$\bullet$};
	\node[inner sep=.4mm] (23a) at (1.866,.5) {$\bullet$};
	\node[inner sep=.4mm] (23b) at (1.866,-.5) {$\bullet$};
	\node[inner sep=.4mm] (01a) at (1,1) {$\bullet$};
	\node[inner sep=.4mm] (11a) at (0,1) {$\bullet$};
	\node[inner sep=.4mm] (01b) at (1,-1) {$\bullet$};
	\node[inner sep=.4mm] (11b) at (0,-1) {$\bullet$};
	
	\draw[->] (03a) to [bend right=30] (03b);
	\draw[->] (03b) to [bend right=30] (03c);
	\draw[->] (03c) to [bend right=30] (03a);
	\draw[->] (13a) to (03a);
	\draw[->] (13b) to (03b);
	\draw[->] (13c) to (03c);
	\draw[->] (23a) to (13a);
	\draw[->] (23b) to (13a);
	\draw[->] (01a) to [out=-45, in=45, looseness=10] (01a);
	\draw[->] (01b) to [out=-45, in=45, looseness=10] (01b);
	\draw[->] (11a) to (01a);
	\draw[->] (11b) to (01b);
	\end{tikzpicture}\\
	
	\hline
	
	\multicolumn{1}{|l|}{{\bf 12(3,1,1)b} \hfill\cite[Proposition 5.21]{doyle:2018quad}}&
	\multicolumn{1}{l|}{{\bf 12(3,2)a} \hfill\cite[Lemma 3.57]{doyle/faber/krumm:2014}}\\

	\begin{tikzpicture}[scale=1]
	\tikzset{vertex/.style = {}}
	\tikzset{every loop/.style={min distance=10mm,in=45,out=-45,->}}
	\tikzset{edge/.style={decoration={markings,mark=at position 1 with %
	    {\arrow[scale=1.2,>=stealth]{>}}},postaction={decorate}}}
	    
	\node[inner sep=.4mm] (03a) at (0,0) {$\bullet$};
	\node[inner sep=.4mm] (03c) at (-.866,-.5) {$\bullet$};
	\node[inner sep=.4mm] (03b) at (-.866,.5) {$\bullet$};
	\node[inner sep=.4mm] (13a) at (1,0) {$\bullet$};
	\node[inner sep=.4mm] (13c) at (-1.366,-1.366) {$\bullet$};
	\node[inner sep=.4mm] (13b) at (-1.366,1.366) {$\bullet$};
	\node[inner sep=.4mm] (01a) at (2,1) {$\bullet$};
	\node[inner sep=.4mm] (11a) at (1,1) {$\bullet$};
	\node[inner sep=.4mm] (01b) at (2,-1) {$\bullet$};
	\node[inner sep=.4mm] (11b) at (1,-1) {$\bullet$};
	\node[inner sep=.4mm] (21a) at (.134,1.5) {$\bullet$};
	\node[inner sep=.4mm] (21b) at (.134,.5) {$\bullet$};
	
	\draw[->] (03a) to [bend right=30] (03b);
	\draw[->] (03b) to [bend right=30] (03c);
	\draw[->] (03c) to [bend right=30] (03a);
	\draw[->] (13a) to (03a);
	\draw[->] (13b) to (03b);
	\draw[->] (13c) to (03c);
	\draw[->] (01a) to [out=-45, in=45, looseness=10] (01a);
	\draw[->] (01b) to [out=-45, in=45, looseness=10] (01b);
	\draw[->] (11a) to (01a);
	\draw[->] (11b) to (01b);
	\draw[->] (21a) to (11a);
	\draw[->] (21b) to (11a);
	\end{tikzpicture}&
	
	\begin{tikzpicture}[scale=1]
	\tikzset{vertex/.style = {}}
	\tikzset{every loop/.style={min distance=10mm,in=45,out=-45,->}}
	\tikzset{edge/.style={decoration={markings,mark=at position 1 with %
	    {\arrow[scale=1.2,>=stealth]{>}}},postaction={decorate}}}
	    
	\node[inner sep=.4mm] (03a) at (0,0) {$\bullet$};
	\node[inner sep=.4mm] (03b) at (-.866,.5) {$\bullet$};
	\node[inner sep=.4mm] (03c) at (-.866,-.5) {$\bullet$};
	\node[inner sep=.4mm] (13a) at (1,0) {$\bullet$};
	\node[inner sep=.4mm] (13b) at (-1.366,1.366) {$\bullet$};
	\node[inner sep=.4mm] (13c) at (-1.366,-1.366) {$\bullet$};
	\node[inner sep=.4mm] (23a) at (1.866,.5) {$\bullet$};
	\node[inner sep=.4mm] (23b) at (1.866,-.5) {$\bullet$};
	\node[inner sep=.4mm] (02a) at (1,1.366) {$\bullet$};
	\node[inner sep=.4mm] (02b) at (2,1.366) {$\bullet$};
	\node[inner sep=.4mm] (12a) at (0,1.366) {$\bullet$};
	\node[inner sep=.4mm] (12b) at (3,1.366) {$\bullet$};
	
	\draw[->] (03a) to [bend right=30] (03b);
	\draw[->] (03b) to [bend right=30] (03c);
	\draw[->] (03c) to [bend right=30] (03a);
	\draw[->] (13a) to (03a);
	\draw[->] (13b) to (03b);
	\draw[->] (13c) to (03c);
	\draw[->] (23a) to (13a);
	\draw[->] (23b) to (13a);
	\draw[->] (02a) to [bend right=60] (02b);
	\draw[->] (02b) to [bend right=60] (02a);
	\draw[->] (12a) to (02a);
	\draw[->] (12b) to (02b);
	\end{tikzpicture}\\
	
	\hline

    \multicolumn{1}{|l|}{{\bf 12(3,2)b} \hfill\cite[Proposition 5.26]{doyle:2018quad}}&
	\multicolumn{1}{l|}{{\bf 14(3,3)}}\\

	\begin{tikzpicture}[scale=1]
	\tikzset{vertex/.style = {}}
	\tikzset{every loop/.style={min distance=10mm,in=45,out=-45,->}}
	\tikzset{edge/.style={decoration={markings,mark=at position 1 with %
	    {\arrow[scale=1.2,>=stealth]{>}}},postaction={decorate}}}
	    
	\node[inner sep=.4mm] (03a) at (0,0) {$\bullet$};
	\node[inner sep=.4mm] (03b) at (-.866,.5) {$\bullet$};
	\node[inner sep=.4mm] (03c) at (-.866,-.5) {$\bullet$};
	\node[inner sep=.4mm] (13a) at (1,0) {$\bullet$};
	\node[inner sep=.4mm] (13b) at (-1.366,1.366) {$\bullet$};
	\node[inner sep=.4mm] (13c) at (-1.366,-1.366) {$\bullet$};
	\node[inner sep=.4mm] (02a) at (-3,0) {$\bullet$};
	\node[inner sep=.4mm] (02b) at (-4,0) {$\bullet$};
	\node[inner sep=.4mm] (12a) at (-2,0) {$\bullet$};
	\node[inner sep=.4mm] (12b) at (-5,0) {$\bullet$};
	\node[inner sep=.4mm] (22a) at (-5.866,.5) {$\bullet$};
	\node[inner sep=.4mm] (22b) at (-5.866,-.5) {$\bullet$};

	\draw[->] (03a) to [bend right=30] (03b);
	\draw[->] (03b) to [bend right=30] (03c);
	\draw[->] (03c) to [bend right=30] (03a);
	\draw[->] (13a) to (03a);
	\draw[->] (13b) to (03b);
	\draw[->] (13c) to (03c);
	\draw[->] (02a) to [bend right=60] (02b);
	\draw[->] (02b) to [bend right=60] (02a);
	\draw[->] (12a) to (02a);
	\draw[->] (12b) to (02b);
	\draw[->] (22a) to (12b);
	\draw[->] (22b) to (12b);
	\end{tikzpicture}&
    
	\begin{tikzpicture}[scale=1]
	\tikzset{vertex/.style = {}}
	\tikzset{every loop/.style={min distance=10mm,in=45,out=-45,->}}
	\tikzset{edge/.style={decoration={markings,mark=at position 1 with %
	    {\arrow[scale=1.2,>=stealth]{>}}},postaction={decorate}}}
	    
	\node[inner sep=.4mm] (03a) at (0,0) {$\bullet$};
	\node[inner sep=.4mm] (03b) at (-.866,.5) {$\bullet$};
	\node[inner sep=.4mm] (03c) at (-.866,-.5) {$\bullet$};
	\node[inner sep=.4mm] (13a) at (1,0) {$\bullet$};
	\node[inner sep=.4mm] (13b) at (-1.366,1.366) {$\bullet$};
	\node[inner sep=.4mm] (13c) at (-1.366,-1.366) {$\bullet$};
	\node[inner sep=.4mm] (03d) at (-3,0) {$\bullet$};
	\node[inner sep=.4mm] (03e) at (-3.866,.5) {$\bullet$};
	\node[inner sep=.4mm] (03f) at (-3.866,-.5) {$\bullet$};
	\node[inner sep=.4mm] (13d) at (-2,0) {$\bullet$};
	\node[inner sep=.4mm] (13e) at (-4.366,1.366) {$\bullet$};
	\node[inner sep=.4mm] (13f) at (-4.366,-1.366) {$\bullet$};
	\node[inner sep=.4mm] (23a) at (1.866,.5) {$\bullet$};
	\node[inner sep=.4mm] (23b) at (1.866,-.5) {$\bullet$};
	
	\draw[->] (03a) to [bend right=30] (03b);
	\draw[->] (03b) to [bend right=30] (03c);
	\draw[->] (03c) to [bend right=30] (03a);
	\draw[->] (13a) to (03a);
	\draw[->] (13b) to (03b);
	\draw[->] (13c) to (03c);
	\draw[->] (03d) to [bend right=30] (03e);
	\draw[->] (03e) to [bend right=30] (03f);
	\draw[->] (03f) to [bend right=30] (03d);
	\draw[->] (13d) to (03d);
	\draw[->] (13e) to (03e);
	\draw[->] (13f) to (03f);
	\draw[->] (23a) to (13a);
	\draw[->] (23b) to (13a);
	\end{tikzpicture}\\
	\hline
\end{longtable}

\newpage

\bibliography{refs.bib}

\bibliographystyle{amsplain}

\end{document}